\documentclass[11pt,reqno]{amsart}
\usepackage[utf8]{inputenc}
\usepackage{amssymb}
\usepackage[margin=1in]{geometry}
\usepackage{graphicx}
\usepackage{booktabs}
\usepackage[numbers]{natbib}
\usepackage{xcolor}
\usepackage{caption}
\usepackage{subcaption}
\usepackage[colorlinks=true]{hyperref}
\definecolor{linkcol}{HTML}{1A4C8B}  
\definecolor{citecol}{HTML}{2E7D32}  
\definecolor{urlcol}{HTML}{9C1B2E}   
\hypersetup{linkcolor=linkcol, citecolor=citecol, urlcolor=urlcol}
\usepackage{tikz}
\usepackage{pgfplots}
\usepackage[ruled,vlined]{algorithm2e}
\usepackage{graphicx}
\usepackage[all,cmtip]{xy}
\pgfplotsset{compat=1.18}
\usetikzlibrary{positioning, arrows.meta}
\usepackage{eulervm,charter}
\DeclareMathOperator{\rank}{rank}
\newcommand{\R}{\mathbb{R}}
\newcommand{\Z}{\mathbb{Z}}
\newcommand{\Alpha}{\mathrm{Alpha}}

\makeatletter
\renewcommand{\paragraph}{\@startsection{paragraph}{4}%
  {\z@}{.5\linespacing\@plus.7\linespacing}{-.5em}%
  {\normalfont\bfseries}}
\makeatother

\usepackage{enumitem}
\usepackage{tcolorbox}
\tcbuselibrary{breakable}

\newtheorem{theorem}{Theorem}[section]
\newtheorem{lemma}[theorem]{Lemma}
\newtheorem{proposition}[theorem]{Proposition}
\newtheorem{corollary}[theorem]{Corollary}

\theoremstyle{definition}
\newtheorem{definition}[theorem]{Definition}

\newtheorem{example}[theorem]{Example}

\theoremstyle{remark}
\newtheorem{remark}[theorem]{Remark}

\title[The Intersection Euler Characteristic Profile]{The Intersection Euler Characteristic Profile: Euler Calculus and Stability for Topological Interaction of Ball Unions}

\author{Kazuhiro Kawamura}
\address{Department of Mathematics, University of Tsukuba, Japan}
\email{kawamura@math.tsukuba.ac.jp}

\author{Sushovan Majhi}
\address{Data Science Program, George Washington University, Washington D.C., USA}
\email{sushovan@gwu.edu}

\author{Atish Mitra}
\address{Department of Mathematical Sciences, Montana Technological University, Butte, MT, USA}
\email{amitra@mtech.edu}

\subjclass[2020]{55N31, 62R40, 55U99, 57N99}

\keywords{Euler calculus, constructible functions, topological interaction, ball unions, stability, valuations}

\begin{document}

\begin{abstract}
The \emph{Intersection Euler Characteristic Profile} (Intersection ECP) of $k$ colored point clouds $X_1, \ldots, X_k \subset \R^d$ is the Euler characteristic $\chi(\bigcap_{i=1}^k \mathcal{U}(X_i; t_i))$ of the overlap of their ball unions---an integer-valued, multiparameter invariant of their topological interaction across scales.
Its organizing framework is the Euler calculus on constructible functions: the profile is equally the Euler integral $\int \prod_{i=1}^k \mathbf{1}_{\mathcal{U}(X_i; t_i)}\, d\chi$ of the product of the $k$ data-dependent offsets, and this identity---our \emph{Intersection Theorem}---is a commuting square interchanging geometric intersection and algebraic product.
The invariant is rigid-motion invariant, scale-equivariant, and $L^1$-stable, and it is canonical: among pointwise-Euler interaction profiles it is the one forced by separation and normalization, the top floor of a spectrum of descriptors graded by how many clouds meet.
For $n$ points a single sorted Alpha-complex sweep computes it in $O(n^{\lceil d/2\rceil}\log n)$ time with \emph{no} persistence reduction, worst-case optimal in even dimensions.
Where the Euler characteristic cancels, a relative-homology refinement resolves the finer interaction and is stable in the two-parameter interleaving distance.
Finally, for increasingly dense samples the profile and its refinement are consistent, recovering the (relative) homology and Euler characteristic of the underlying shapes---in the inverse limit for compact sets, and, under positive reach, persistently and with explicit sample complexity.
\end{abstract}

\maketitle

\section{Introduction}\label{sec:introduction}

A fundamental problem in topological data analysis (TDA) is to quantify how two---or more---point clouds interact across spatial scales.
Given finite clouds $X_1, \ldots, X_k \subset \R^d$, each thickened to its scale-parametrized ball union $\mathcal{U}(X_i; r) = \bigcup_{p \in X_i} B(p, r)$, we ask for a single \emph{scale-dependent integer} $I(r)$ that measures their topological interaction.
A useful such invariant should satisfy:
\begin{itemize}
    \item[\textbf{(P1)}] \textbf{Separation.} $I(r) = 0$ while the clouds remain disjoint at scale $r$: no interaction registers before the thickenings meet.
    \item[\textbf{(P2)}] \textbf{Consistency.} When the clouds are dense samples of a common shape $K$, $I(r)$ stabilizes to a topological invariant of $K$---one fixed by its shape, not its size or position.
        \item[\textbf{(P3)}] \textbf{Symmetry and $k$-independence.} $I(r)$ treats the clouds symmetrically---no cloud is a privileged ``foreground''---and is defined for every $k$ with no arbitrary choices.
    \item[\textbf{(P4)}] \textbf{Stability.} $I(r)$ changes by a controlled amount under perturbation of the clouds---ideally Lipschitz in the Hausdorff distance.
    \item[\textbf{(P5)}] \textbf{Tractability and inference.} $I(r)$ is cheap to compute and, being a curve rather than a diagram, enters statistical inference and learning directly---admitting a hypothesis test that separates genuine interaction from chance---with no arbitrary vectorization step.
\end{itemize}

Persistence barcodes fail (P1)--(P2) in opposite ways: the diagram of a single cloud is blind to the presence of another---two Gaussian clusters have identical diagrams however they are positioned---while the diagram of the union $X \cup Y$ responds to the clouds' relative position while they are still disjoint and conflates each cloud's intrinsic topology with the interaction we wish to isolate.
A natural categorified response is \emph{multiparameter persistence}, filtering the two clouds jointly by their separate scales $(r, s)$; but two-parameter persistence modules are notoriously intractable---they admit no interval decomposition, and hence no barcode \citep{carlsson2009theory}, carry no complete discrete invariant \citep{botnan2022introduction}, and their interleaving distance is NP-hard to compute \citep{bjerkevik2018computing}---so they furnish no usable scale-dependent number of the kind (P1)--(P5) demand.
Two recent frameworks instead measure interaction directly.
\emph{Mixup barcodes} \citep{wagner2024mixup} track how the persistence bars of one cloud are shortened by the presence of another through image persistence; \emph{chromatic} TDA \citep{cultrera2024chromatic} colors the points and reads a \emph{six-pack} of persistence diagrams off chromatic Delaunay and alpha complexes \citep{montesano2022chromatic}.
Both capture genuine---and, in the chromatic case, remarkably fine---interaction, and the mixup barcode is stable.
Measured against (P1)--(P5), however, each leaves gaps that its own authors record as open.
Mixup barcodes are \emph{asymmetric}---the inclusion $L \hookrightarrow K$ privileges one cloud (P3)---admit \emph{no canonical extension to $k > 2$ clouds} and \emph{no hypothesis test} (both stated as open problems by \citet{wagner2024mixup}), and rely on an image-persistence reduction cubic in the simplex count of a Vietoris--Rips filtration that only approximates ball-union geometry (P5).
Chromatic TDA is tied to the Euclidean Delaunay--Voronoi construction, returns a full six-pack rather than a single integer, and develops no stability, sampling, or inferential theory.
Both, moreover, are diagram-valued: to enter a statistical test or a learning algorithm they must first be \emph{vectorized}---through persistence images, landscapes, landmarks, or ad hoc scalar summaries such as total mixup or the MST-ratio---a step that is lossy and non-canonical. The situation is worse for the chromatic framework, whose companion scalar---the MST-ratio, the combined length of the
monochromatic spanning trees over that of the multichromatic
one~\citep{cultrera2024mstratio}---is not a summary of the six-pack at all, but
an independent geometric construction: real-valued, purely $\beta_0$-level, and
returning a single number rather than a scale-indexed profile.
In each case the \emph{intuition} is right (to measure the interaction itself) but no single invariant is at once symmetric, $k$-natural, geometrically exact, cheap, and testable.

\begin{figure}[t]
\centering
\begin{tikzpicture}[
    every node/.style={font=\footnotesize},
    bluept/.style={fill=blue!70!black, draw=blue!70!black, circle, inner sep=0pt, minimum size=2.5pt},
    redpt/.style={fill=red!70!black, draw=red!70!black, circle, inner sep=0pt, minimum size=2.5pt},
]

\def\panelW{4.2}
\def\panelH{3.8}
\def\gapX{5.0}

\begin{scope}[shift={(0,0)}]
  \node[font=\footnotesize\bfseries] at (0.5*\panelW, \panelH+0.4) {(a) Separated};
  \draw[thick, rounded corners=2pt] (0,0) rectangle (\panelW,\panelH);
  \foreach \x/\y in {0.6/2.6, 1.0/3.0, 0.8/2.1, 1.3/2.7, 1.1/3.3, 0.5/3.1, 1.2/2.2, 0.9/1.8, 1.4/3.2, 0.7/2.8, 1.1/2.5, 0.4/2.3}{
    \fill[blue!18, opacity=0.55] (\x,\y) circle (0.48);
  }
  \foreach \x/\y in {0.6/2.6, 1.0/3.0, 0.8/2.1, 1.3/2.7, 1.1/3.3, 0.5/3.1, 1.2/2.2, 0.9/1.8, 1.4/3.2, 0.7/2.8, 1.1/2.5, 0.4/2.3}{
    \node[bluept] at (\x,\y) {};
  }
  \foreach \x/\y in {2.9/1.3, 3.3/1.7, 3.1/0.8, 3.5/1.4, 2.8/1.8, 3.2/1.1, 3.6/1.9, 3.0/0.6, 3.4/0.9, 2.7/1.2, 3.3/0.5, 3.1/2.0}{
    \fill[red!18, opacity=0.55] (\x,\y) circle (0.48);
  }
  \foreach \x/\y in {2.9/1.3, 3.3/1.7, 3.1/0.8, 3.5/1.4, 2.8/1.8, 3.2/1.1, 3.6/1.9, 3.0/0.6, 3.4/0.9, 2.7/1.2, 3.3/0.5, 3.1/2.0}{
    \node[redpt] at (\x,\y) {};
  }
  \node[bluept, minimum size=4pt] at (0.3,3.55) {};
  \node[anchor=west, font=\scriptsize] at (0.5,3.55) {$X$};
  \node[redpt, minimum size=4pt] at (0.3,3.3) {};
  \node[anchor=west, font=\scriptsize] at (0.5,3.3) {$Y$};
\end{scope}

\begin{scope}[shift={(\gapX,0)}]
  \node[font=\footnotesize\bfseries] at (0.5*\panelW, \panelH+0.4) {(b) Overlapping};
  \draw[thick, rounded corners=2pt] (0,0) rectangle (\panelW,\panelH);
  \foreach \x/\y in {0.7/2.0, 1.1/2.6, 1.4/1.6, 0.8/2.9, 1.5/2.3, 0.9/1.3, 1.7/2.8, 0.5/2.4, 1.3/1.2, 1.0/1.8, 1.6/2.0, 0.6/1.7, 1.2/3.1, 1.8/1.4, 0.9/2.3}{
    \fill[blue!18, opacity=0.55] (\x,\y) circle (0.52);
  }
  \foreach \x/\y in {2.1/1.8, 2.5/2.4, 2.8/1.4, 2.0/2.7, 2.7/2.1, 2.3/1.1, 2.9/2.6, 1.9/2.2, 2.6/1.0, 2.2/1.6, 2.4/2.9, 3.0/1.7, 2.1/1.3, 2.8/2.4, 3.1/2.0}{
    \fill[red!18, opacity=0.55] (\x,\y) circle (0.52);
  }
  \fill[violet!35, opacity=0.7, rounded corners=4pt]
    (1.4,0.9) -- (1.9,0.7) -- (2.4,0.8) -- (2.65,1.2) -- (2.55,1.8) -- (2.7,2.4) -- (2.4,3.0) -- (1.9,3.1) -- (1.5,2.8) -- (1.3,2.2) -- (1.2,1.5) -- cycle;
  \draw[violet!70!black, thick, opacity=0.9, rounded corners=4pt]
    (1.4,0.9) -- (1.9,0.7) -- (2.4,0.8) -- (2.65,1.2) -- (2.55,1.8) -- (2.7,2.4) -- (2.4,3.0) -- (1.9,3.1) -- (1.5,2.8) -- (1.3,2.2) -- (1.2,1.5) -- cycle;
  \foreach \x/\y in {0.7/2.0, 1.1/2.6, 1.4/1.6, 0.8/2.9, 1.5/2.3, 0.9/1.3, 1.7/2.8, 0.5/2.4, 1.3/1.2, 1.0/1.8, 1.6/2.0, 0.6/1.7, 1.2/3.1, 1.8/1.4, 0.9/2.3}{
    \node[bluept] at (\x,\y) {};
  }
  \foreach \x/\y in {2.1/1.8, 2.5/2.4, 2.8/1.4, 2.0/2.7, 2.7/2.1, 2.3/1.1, 2.9/2.6, 1.9/2.2, 2.6/1.0, 2.2/1.6, 2.4/2.9, 3.0/1.7, 2.1/1.3, 2.8/2.4, 3.1/2.0}{
    \node[redpt] at (\x,\y) {};
  }
\end{scope}

\begin{scope}[shift={(2*\gapX,0)}]
  \node[font=\footnotesize\bfseries] at (0.5*\panelW, \panelH+0.4) {(c) Encircling};
  \draw[thick, rounded corners=2pt] (0,0) rectangle (\panelW,\panelH);
  \def\cx{2.1} \def\cy{1.9}
  \foreach \angle in {0,24,...,336}{
    \pgfmathsetmacro{\rx}{\cx+1.45*cos(\angle)}
    \pgfmathsetmacro{\ry}{\cy+1.45*sin(\angle)}
    \fill[red!18, opacity=0.5] (\rx,\ry) circle (0.5);
  }
  \fill[violet!35, opacity=0.65, even odd rule] (\cx,\cy) circle (1.1) (\cx,\cy) circle (0.72);
  \draw[violet!70!black, thick, opacity=0.9] (\cx,\cy) circle (1.1);
  \draw[violet!70!black, thick, opacity=0.9] (\cx,\cy) circle (0.72);
  \foreach \x/\y in {1.85/2.1, 2.3/1.9, 2.0/1.6, 2.1/2.3, 1.65/1.8, 2.35/2.15, 1.95/1.9, 2.2/1.5, 2.05/2.15, 1.75/2.0}{
    \fill[blue!18, opacity=0.55] (\x,\y) circle (0.38);
  }
  \foreach \x/\y in {1.85/2.1, 2.3/1.9, 2.0/1.6, 2.1/2.3, 1.65/1.8, 2.35/2.15, 1.95/1.9, 2.2/1.5, 2.05/2.15, 1.75/2.0}{
    \node[bluept] at (\x,\y) {};
  }
  \foreach \angle in {0,24,...,336}{
    \pgfmathsetmacro{\rx}{\cx+1.45*cos(\angle)}
    \pgfmathsetmacro{\ry}{\cy+1.45*sin(\angle)}
    \node[redpt] at (\rx,\ry) {};
  }
\end{scope}

\node[font=\footnotesize, violet!70!black] at (0.5*\panelW + \gapX, -0.5) {%
  Purple shading: $\mathcal{U}(X;r) \cap \mathcal{U}(Y;r)$, whose $\chi$ equals $\Delta\chi(r)$};

\def\plotShift{-4.6}

\begin{scope}[shift={(0,\plotShift)}]
  \begin{axis}[
    width=4.6cm, height=3.4cm,
    xmin=0, xmax=55, ymin=-0.3, ymax=2.5,
    xlabel={Scale $r$}, ylabel={$\Delta\chi(r)$},
    xlabel style={font=\scriptsize, at={(axis description cs:0.5,-0.22)}},
    ylabel style={font=\scriptsize},
    tick label style={font=\tiny},
    axis lines=left, ytick={0,1,2}, xtick={0,20,40}, clip=false,
  ]
    \addplot[const plot, fill=violet!15, draw=none, forget plot] coordinates {
      (0,0) (3,0) (3,2) (5,2) (5,1) (55,1) (55,0)} \closedcycle;
    \addplot[const plot, violet!80!black, very thick] coordinates {
      (0,0) (3,0) (3,2) (5,2) (5,1) (55,1)};
    \node[font=\scriptsize, violet!70!black] at (axis cs:28,2.4) {$\max|\Delta\chi| = 2$};
    \draw[-{Stealth[length=3pt]}, violet!60!black, thin] (axis cs:20,2.05) -- (axis cs:4.5,1.98);
  \end{axis}
\end{scope}

\begin{scope}[shift={(\gapX,\plotShift)}]
  \begin{axis}[
    width=4.6cm, height=3.4cm,
    xmin=0, xmax=280, ymin=-0.5, ymax=17,
    xlabel={Scale $r$}, ylabel={$\Delta\chi(r)$},
    xlabel style={font=\scriptsize, at={(axis description cs:0.5,-0.22)}},
    ylabel style={font=\scriptsize},
    tick label style={font=\tiny},
    axis lines=left, ytick={0,5,10,15}, xtick={0,100,200}, clip=false,
  ]
    \addplot[const plot, fill=violet!15, draw=none, forget plot] coordinates {
      (0,0) (1,0) (1,15) (4,15) (4,12) (8,12) (8,8) (12,8) (12,5) (20,5) (20,3) (35,3) (35,2) (60,2) (60,1) (280,1) (280,0)} \closedcycle;
    \addplot[const plot, violet!80!black, very thick] coordinates {
      (0,0) (1,0) (1,15) (4,15) (4,12) (8,12) (8,8) (12,8) (12,5) (20,5) (20,3) (35,3) (35,2) (60,2) (60,1) (280,1)};
    \node[font=\scriptsize, violet!70!black] at (axis cs:120,16.6) {$\max|\Delta\chi| = 15$};
    \draw[-{Stealth[length=3pt]}, violet!60!black, thin] (axis cs:80,15.6) -- (axis cs:6,14.9);
  \end{axis}
\end{scope}

\begin{scope}[shift={(2*\gapX,\plotShift)}]
  \begin{axis}[
    width=4.6cm, height=3.4cm,
    xmin=0, xmax=7, ymin=-0.5, ymax=10,
    xlabel={Scale $r$}, ylabel={$\Delta\chi(r)$},
    xlabel style={font=\scriptsize, at={(axis description cs:0.5,-0.22)}},
    ylabel style={font=\scriptsize},
    tick label style={font=\tiny},
    axis lines=left, ytick={0,2,4,6,8}, xtick={0,2,4,6}, clip=false,
  ]
    \addplot[const plot, fill=violet!15, draw=none, forget plot] coordinates {
      (0,0) (0.7,0) (0.7,3) (0.85,3) (0.85,8) (0.95,8) (0.95,4) (1.1,4) (1.1,1) (1.3,1) (1.3,0) (2.8,0) (2.8,1) (7,1) (7,0)} \closedcycle;
    \addplot[const plot, violet!80!black, very thick] coordinates {
      (0,0) (0.7,0) (0.7,3) (0.85,3) (0.85,8) (0.95,8) (0.95,4) (1.1,4) (1.1,1) (1.3,1) (1.3,0) (2.8,0) (2.8,1) (7,1)};
    \node[font=\scriptsize, violet!70!black] at (axis cs:4.5,9.5) {$\max|\Delta\chi| = 8$};
    \draw[-{Stealth[length=3pt]}, violet!60!black, thin] (axis cs:3.3,8.7) -- (axis cs:1.05,8.05);
  \end{axis}
\end{scope}

\end{tikzpicture}
\caption{Three configurations of point clouds $X$ (blue) and $Y$ (red) in $\R^2$, with ball unions shown as shaded disks. Purple shading marks the intersection $\mathcal{U}(X;r) \cap \mathcal{U}(Y;r)$, whose Euler characteristic equals $\Delta\chi(r)$ (Intersection Theorem). Bottom row: the Intersection ECP $\Delta\chi(r)$ for each configuration. The separated configuration~(a) has a small, late-onset signal; the overlapping~(b) has a large peak; the encircling~(c) has a moderate, topologically distinctive signal.}
\label{fig:motivating}
\end{figure}

We propose to take the interaction to be the \emph{Euler-characteristic defect}
\begin{equation}\label{eq:delta_chi_intro}
    \Delta\chi(r) = \chi_X(r) + \chi_Y(r) - \chi_{X \cup Y}(r) = \chi\!\left(\mathcal{U}(X; r) \cap \mathcal{U}(Y; r)\right),
\end{equation}
the Euler characteristic of the region where the thickened clouds overlap (the two expressions agree by the valuation property of $\chi$).
Figure~\ref{fig:motivating} shows three configurations---separated, overlapping, encircling---differing in the clouds' relative position, which single-cloud persistence does not record, but with qualitatively different $\Delta\chi$ profiles.
This single integer meets each of our requirements: it vanishes off the overlap (P1), returns $\chi(K)$ on a shared compact support $K$ of positive reach (P2), is symmetric and defined for all $k$ by replacing the two-fold intersection with a $k$-fold one (P3), is stable under perturbation---its intersection filtration is bottleneck $1$-Lipschitz in the cloudwise-maximum Hausdorff distance $\max_i d_H(X_i,X_i')$ (Theorem~\ref{thm:algebraic_stability}(1)) and the profile itself is $L^1$-stable (P4)---
and is computed by a single linear scan of an Alpha complex; being a curve rather than a diagram, it enters inference and learning with no vectorization step \citep{hacquard2024euler, dlotko2023euler} (P5).
Moreover these properties do not merely hold---they \emph{characterize} $\Delta\chi$: it is the unique top-floor pointwise-Euler interaction profile singled out by the Separation and Normalization axioms (Theorem~\ref{thm:universality}; Definition~\ref{def:floors}).
In particular, the construction resolves the canonical multi-cloud extension left open by \citet{wagner2024mixup} and supplies the stability and sampling theory that the chromatic framework does not; hypothesis testing and estimation for the invariant are left to future work.

This paper develops the foundations of the Intersection ECP.\footnote{Introduced under the name \emph{mixup Euler characteristic profile}, after the mixup barcode of \citet{wagner2024mixup}; we rename it for the intersection it computes, reserving \emph{interaction} for the phenomenon it measures. The companion applications \citep{majhi2026regime} rest on the same invariant.} The invariant is not an abstraction in search of a use. In the dynamical-systems companion \citep{majhi2026regime} it detects regime transitions, benchmarked on Indian monsoon onset and ENSO; it is designed equally for measuring class disentanglement in neural representations, where interaction must be read across many layers, epochs, and seeds at a scale for which the reduction-based mixup barcode is impractical. Neither setting is served by the mixup barcode or the chromatic six-pack: both are pairwise, reduction-heavy, and carry no hypothesis test (Section~\ref{sec:relation_invariants}). The present paper isolates the invariant those applications rest on and proves it is the right one.

The topological invariant in (P2) is not a free choice.
Modeling interaction as an inclusion--exclusion defect makes the underlying summary a \emph{valuation}, and by Hadwiger's theorem \citep{hadwiger1957vorlesungen, klain1997introduction} every continuous, rigid-motion-invariant, scale-invariant valuation on convex bodies is a multiple of the Euler characteristic, and the normalization $\chi(\mathrm{pt}) = 1$ singles out $\chi$ itself; so (P2) is forced---the invariant stabilizes to $\chi(K)$.
Two distinct canonicity statements thus reinforce each other: Hadwiger forces the \emph{summary} to be $\chi$, while the Universality Theorem (Theorem~\ref{thm:universality}) forces the way the $k$ clouds are \emph{combined} to be the product, together pinning the interaction profile to $\Delta\chi$. What a measurement task wants, then, is not the richest topological \emph{descriptor} but a \emph{measurement}---exactly recoverable, canonically forced, and cheap enough to sweep at population scale (Section~\ref{sec:contributions}).

A \emph{real}-valued invariant would instead measure the metric \emph{size} of the overlap; we take the topological extreme because its discreteness buys \emph{exact} sampling recovery and its homotopy-invariance  the universality above, and real-valued expressiveness re-enters only downstream, in the scalar summaries $\max_r |\Delta\chi(r)|$ and $\int |\Delta\chi(r)|\,dr$, the latter over a fixed finite scale window (since $\Delta\chi(r) \to 1$ at large scales, the unwindowed integral diverges).

That these desiderata are met all at once---rather than by a separate device for each---is no accident: it reflects the structure of \emph{Euler calculus}, the theory of integration with respect to the Euler characteristic, developed by Schapira~\citep{schapira1991operations}, Viro~\citep{viro1988some}, and Curry~\citep{curry2014sheaves}; see \citet{curry2012euler} for a comprehensive survey.
Euler calculus has experienced a renaissance in recent years: the Euler characteristic transform (ECT) of \citet{turner2014persistent} and \citet{curry2022directions} provides an injective signature for individual shapes; Lebovici's hybrid transforms \citep{lebovici2022hybrid} mix Euler and Lebesgue integration to produce Fourier- and Laplace-type invariants of constructible functions; Basu \citep{basu2015complexity} developed a complexity theory for constructible functions and sheaves using Euler integration; and \citet{hacquard2024euler} demonstrated the practical power of Euler characteristic profiles in supervised and unsupervised learning.
Closest to our motivation, the Euler characteristic curve---and its multiscale form, the \emph{Euler characteristic surface} \citep{roy2024ecs}---has proven a stable, cheap, and interpretable descriptor of a \emph{single} shape across applied problems, from drying-droplet and fluid dynamics \citep{roy2020droplets, roy2023characterizing} to multiphase flow-regime identification \citep{koenig2026churn} and time-series classification \citep{luwang2026interpretable}, with applications in unsupervised learning \cite{hacquard2024euler}.
The Intersection ECP is the \emph{interaction} analogue of this single-shape descriptor, and that applied track record motivates building its multi-cloud counterpart.
A unifying theme in this literature is that each invariant integrates a constructible function against a \emph{fixed} family of probes: the ECT uses half-spaces $\mathbf{1}_{H^-_{\nu,t}}$, and hybrid transforms use analytic kernels $K(\xi, \cdot)$.
In each case, one factor in the Euler integral is data-dependent and the other is a fixed geometric or analytic test function.
The present paper studies a different algebraic operation: the Euler integral of a product $\prod_{i=1}^k \mathbf{1}_{\mathcal{U}(X_i; t_i)}$ in which \emph{all $k$ factors are data-dependent}---encoding the mutual interaction of $k$ point clouds rather than the response of one shape to a fixed probe.
In this respect chromatic TDA \citep{cultrera2024chromatic} is the closest relative---it too takes the configuration itself, not a fixed probe, as its data.

The construction rests on a single, elementary observation.
The indicator $\mathbf{1}_{\mathcal{U}(X;r)}$ is a constructible function, pointwise multiplication corresponds to intersection ($\prod_i \mathbf{1}_{A_i} = \mathbf{1}_{\cap_i A_i}$), and the Euler integral $\int \mathbf{1}_A \, d\chi = \chi(A)$ extracts the topological summary; the identity $\Delta\chi = \chi(\text{intersection})$ is therefore a \emph{tautology}, not a calculation.
The Intersection ECP is the resulting compositional assignment $\mathcal{M} : \mathbf{PC}_k(\R^d) \to \mathbf{ZFun}_k$, invariant under rigid motions, scale-equivariant under bi-Lipschitz maps, and $L^1$-stable under Hausdorff perturbations (Lemmas~\ref{lem:rigid_invariance}--\ref{lem:scale_equivariance}, Theorem~\ref{thm:algebraic_stability}).
This compositional structure is not an abstraction for its own sake: the factorization through geometric $\cap$ and algebraic $\prod$ is exactly what makes the Intersection Theorem a \emph{commuting square} (Remark~\ref{rem:naturality}), the extension to $k$ clouds immediate (a $k$-fold product), stability a continuity property of each factor, and the construction canonical (Theorem~\ref{thm:universality}).

\paragraph{Why Euler calculus, and not merely inclusion--exclusion?} For $k = 2$ the identity $\Delta\chi = \chi(X \cap Y)$ is elementary, and one may ask what integration against $d\chi$ buys beyond it.
At the level of \emph{definition}, nothing: one may simply set $\Delta\chi(\mathbf{t}) = \chi(\bigcap_i \mathcal{U}(X_i; t_i))$ for every $k$, and the Euler integral $\int \prod_i \mathbf{1}_{\mathcal{U}(X_i;t_i)}\, d\chi$ equals it by the product formula.
The framework, not the identity, is what carries the paper.
Writing the invariant as $\int \varphi(\mathbf{1}_{\mathcal{U}(X_1;\cdot)}, \ldots, \mathbf{1}_{\mathcal{U}(X_k;\cdot)})\, d\chi$ for a pointwise $\varphi$ embeds $\Delta\chi$ in a whole \emph{family} of candidate interaction profiles, within which canonicity becomes a theorem: separation and normalization force $\varphi = \prod_i$ (Theorem~\ref{thm:universality}), and dropping even the Euler-integral hypothesis forces $\chi$ itself (Theorem~\ref{thm:universality_strong}).
A bare $\chi\big(\bigcap_i \mathcal{U}(X_i; t_i)\big)$ has no competitors and hence no such theorem---the integral form is what lets one ask ``why this scalar, and this combination, and not another.''
We are candid about the limits: the product encodes a \emph{modeling} choice---interaction $=$ $k$-fold intersection---which the calculus organizes but does not force, and much of the analytic theory runs on standard arguments, where Euler calculus supplies language rather than the technical engine.
Its indispensability is at the level of \emph{what the invariant is and why it is forced}.
And it is foundational only for the computable core $\Delta\chi$: the finer structure we develop---the relative-homology refinement, its interleaving stability (Theorem~\ref{thm:relative_stability}), and sampling consistency (Section~\ref{sec:consistency})---lives in a broader \v{C}ech/persistent-homology setting where Euler integration need not apply, and $\Delta\chi$ is its computable Euler shadow.

\paragraph{Two design axes.} The constructions of this paper are organized by two independent choices. \emph{(a)~Which set carries the interaction:} (a.i)~the full overlap $\bigcap_{i} \mathcal{U}(X_i; t_i)$; (a.ii)~refining it, the floors $C_j(\mathbf{t}) = \bigcup_{|S| = j} \bigcap_{i \in S} \mathcal{U}(X_i; t_i)$ of the interaction spectrum, with their pairs $(C_j, C_{j+1})$; or (a.iii)~the leave-one-out union pairs $\big(\bigcup_i \mathcal{U}(X_i; t_i),\ \bigcup_{i \neq j} \mathcal{U}(X_i; t_i)\big)$. All agree for $k = 2$. \emph{(b)~How that set is measured:} (b.i)~by the Euler characteristic, or (b.ii)~by persistent homology. Every combination appears below: (a.i)$+$(b.i) is the Intersection ECP itself, the computable core (Sections~\ref{sec:theory} and~\ref{sec:algorithm}); (a.i)$+$(b.ii) is the stability theory of the intersection filtration (Theorem~\ref{thm:algebraic_stability}); (a.ii)$+$(b.i) is the interaction spectrum (Definition~\ref{def:floors}); (a.iii)$+$(b.ii) is the relative interaction module of Section~\ref{sec:relative}; and the sampling theory of Section~\ref{sec:consistency} runs along both axes. The Euler-characteristic column maximizes computability (P5) { at the price of \emph{cancellation}---balanced Betti numbers can cancel in the alternating sum $\chi = \sum_q (-1)^q \beta_q$, so the integer profile can miss interaction that the persistent-homology column, detecting it degree by degree at greater cost, still sees.} The refinement tower of Section~\ref{sec:relative} makes the trade explicit.

\subsection{Our Contributions}\label{sec:contributions}

With the Euler calculus as the organizing language for the computable invariant $\Delta\chi$, the construction factors as
\[
\mathbf{PC}_k(\R^d) \xrightarrow{\;\mathcal{U}\;} \mathbf{CF}(\R^d)^k \xrightarrow{\;\prod\;} \mathbf{CF}(\R^d) \xrightarrow{\;\textstyle\int d\chi\;} \Z,
\]
offset (geometry) $\to$ product (interaction) $\to$ integration (topology), and this factorization is the source of the main results developed below:

\begin{itemize}
    \item \textbf{Intersection Theorem} (Section~\ref{sec:intersection}): $\Delta\chi(\mathbf{t}) = \chi(\bigcap_i \mathcal{U}(X_i; t_i))$, a \emph{commuting square} in which geometric $\cap$ and algebraic $\prod$ interchange under $\int d\chi$.
    \item \textbf{$k$-cloud generality}: the product $\prod_i \mathbf{1}_{A_i}$ is defined for every $k$, so the Intersection ECP handles $k$ clouds with no extra machinery or choice of order.
    \item \textbf{Universality} (Section~\ref{sec:intersection}): for every $k$, $\Delta\chi$ is the \emph{unique} pointwise-Euler profile fixed by separation and normalization (Theorem~\ref{thm:universality}), and $\chi$ itself is forced by a Hadwiger-type multivaluation argument (Theorem~\ref{thm:universality_strong}); each floor $\chi(C_j)$ of the spectrum is characterized likewise (Proposition~\ref{prop:pairwise}).
    \item \textbf{Analytic and geometric properties} (Section~\ref{sec:props}): a dead zone and elementary properties, a Mayer--Vietoris Betti decomposition of the overlap, what $\Delta\chi$ computes for dense samples, geometric bounds---component counts by zone covering numbers, with a matched fractal growth rate---and multiparameter stability of the profile surface (Proposition~\ref{prop:multiparameter_stability}).
    \item \textbf{Algorithm and complexity} (Section~\ref{sec:algorithm}): a single sorted Alpha-complex sweep computes $\Delta\chi$ in $O(n^{\lceil d/2\rceil}\log n)$ { time, where $n = \sum_i |X_i|$ is the total number of sample points,} with \emph{no} persistence reduction---cheaper than the mixup barcode and, for even $d$, worst-case optimal among filtration-materializing algorithms (Proposition~\ref{prop:hardness}).
    \item \textbf{Relative-homology refinement} (Section~\ref{sec:relative}): where $\chi$ cancels, the symmetrized relative interaction module $\mathrm{RPH}_*$ (the sum of the $k$ leave-one-out modules) resolves the finer structure faithfully and is interleaving-stable (Theorem~\ref{thm:relative_stability}); $\Delta\chi$ is the top floor of the interaction spectrum (Definition~\ref{def:floors}), the whole spectrum equally stable (Proposition~\ref{prop:spectrum_stability}).
    \item \textbf{Consistency under sampling} (Section~\ref{sec:consistency}): as samples densify, the invariant recovers the { ground} truth---qualitatively for arbitrary compact sets in the inverse limit, and quantitatively under bounded reach, persistently at every regular scale and exactly (pointwise) at scales regular for the sample, with explicit sample-complexity bounds.
\end{itemize}

\subsection{Outline}\label{sec:outline}

Section~\ref{sec:theory} constructs the Intersection ECP: the data and algebraic stage, the Euler integral, the construction itself, and the Intersection Theorem.
Section~\ref{sec:props} establishes its analytic and geometric properties; Section~\ref{sec:relative} develops the relative-homology refinement that recovers what $\Delta\chi$ compresses; Section~\ref{sec:consistency} shows the invariant is consistent under sampling; Section~\ref{sec:algorithm} gives the computational realizations, the algorithm, and the complexity analysis; and Section~\ref{sec:discussion} discusses limitations and open problems.

\subsection{Relation to other interaction invariants}\label{sec:relation_invariants}

Two prior invariants also read the topological interaction of colored clouds: the \emph{mixup barcode} of \citet{wagner2024mixup}---the image persistence of the inclusion $\mathcal{U}(Y;r) \hookrightarrow \mathcal{U}(X\cup Y;r)$---and the \emph{chromatic six-pack} \citep{cultrera2024chromatic}, six persistence diagrams read off a chromatic Delaunay complex. Both retain graded homological detail that $\Delta\chi$ collapses into a single alternating sum, and it is worth saying plainly why one would nonetheless want the Euler profile.
The reason is that they answer different questions. $\Delta\chi$ is a deliberate coarsening---one integer per scale---and the coarsening is strict at each level.

\begin{proposition}[Strict coarsening]\label{prop:hierarchy}
For disjoint pairs $(X, Y)$, each reduction in the chain
\[
    \{\text{graded overlap homology } H_*(\mathcal{U}(X;r)\cap\mathcal{U}(Y;s))\}
    \;\longrightarrow\; \Delta\chi(\cdot)
    \;\longrightarrow\; \max_r |\Delta\chi(r)|
\]
strictly loses information:
\begin{enumerate}
    \item \emph{Overlap homology $\succ$ profile.} The interacting and non-interacting pairs of Figure~\ref{fig:saturation} have \emph{identical} profiles $\Delta\chi \equiv 0$ across a whole band, yet their overlap Betti curves differ---one carries the annulus (resp.\ torus-shell) homology, the other none. The Euler alternating sum forgets the balanced Betti numbers those curves resolve.
    \item \emph{Profile $\succ$ peak scalar.} Rescaling a configuration by $\lambda > 0$ leaves $\max_r |\Delta\chi|$ unchanged (scale-invariance, Lemma~\ref{lem:scale_equivariance}) while dilating the profile along $r$; two configurations can thus share the peak statistic yet have different profiles.
\end{enumerate}
(Neither the mixup barcode nor the six-pack sits on this chain: each keeps graded homology $\Delta\chi$ discards, but neither determines $\Delta\chi$, which needs $\chi(\mathcal{U}(X))$ of one cloud alone---so the comparison with them is by \emph{properties}, below, not by refinement.)
\end{proposition}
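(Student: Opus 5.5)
The plan is to show that each arrow in the chain is a well-defined map that is not injective. Well-definedness is immediate. By the Intersection Theorem, $\Delta\chi(r)=\chi(\mathcal{U}(X;r)\cap\mathcal{U}(Y;r))=\sum_q(-1)^q\beta_q$ of the overlap, so the graded overlap homology determines the profile; restricting to the diagonal $s=r$ is enough. The profile in turn determines $\max_r|\Delta\chi(r)|$. Strictness then reduces to exhibiting, for each arrow, two disjoint pairs with equal image and different source.

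For item (1), I would build the two pairs explicitly in $\R^2$ and use the analogous construction in $\R^3$ for the torus-shell version.

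The \emph{interacting} pair takes $X$ to be a dense sample of a small central disk and $Y$ a dense sample of a circle of radius $R$ around it, as in the encircling panel of Figure~\ref{fig:motivating}. For $r$ in a band $(r_1,r_2)$ the overlap $\mathcal{U}(X;r)\cap\mathcal{U}(Y;r)$ deformation retracts onto an annulus. By the nerve lemma, I would certify this by checking that the intersection is a union of convex pieces whose nerve is a cycle. Its Betti numbers are $\beta_0=\beta_1=1$, so $\chi=0$.

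The \emph{non-interacting} pair takes two clouds far enough apart that the overlap is empty on the same band, so $\Delta\chi\equiv0$ there with all Betti numbers zero.

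The Betti curves then differ ($\beta_0=1$ versus $0$), while the profiles agree on the band. The three-dimensional version uses a torus-shaped shell overlap with $\chi=0$ and $(\beta_0,\beta_1,\beta_2)=(1,2,1)$.

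The main obstacle is making the band rigorous for finite samples rather than continuum shapes. I would first choose the sampling density $\varepsilon$ small relative to $R-r$ and to the inner disk radius. I would then argue that the overlap is homotopy equivalent to the continuum annulus $\{R-r\le|x|\le \rho+r\}$, up to $O(\varepsilon)$ thickening, using a straight-line radial deformation. If a clean argument proves hard, I would fall back to any explicit finite configuration whose overlap is verifiably an annulus: for instance, a single $X$ point at the origin and $Y$ a ring of $m$ points at radius $R$, with $r$ slightly larger than $R/2$. In that case the overlap is a disk intersected with a union of disks forming a ring, and the nerve computation is finite.

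For item (2), I would invoke Lemma~\ref{lem:scale_equivariance}: $\Delta\chi_{\lambda X,\lambda Y}(r)=\Delta\chi_{X,Y}(r/\lambda)$. Hence the supremum over $r$ is unchanged. Taking any pair whose profile is nonconstant, for example the one in item (1), and $\lambda\neq1$, the jump scales differ: the first $r$ at which the profile becomes nonzero is multiplied by $\lambda$. So the profiles differ while the peaks coincide.

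The parenthetical remark needs no proof beyond noting that it is a comparison by properties rather than a claim in the chain.
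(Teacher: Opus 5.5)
Your proposal is correct and follows the paper's own argument. Item~(1) sets an annulus (resp.\ torus-shell) overlap with $\chi=0$ against a pair whose overlap is empty on the same band. Item~(2) is the homothety case of Lemma~\ref{lem:scale_equivariance}, with the onset scale $d_{\min}/2$ dilated by $\lambda$.

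The only difference is that you certify the annulus overlap analytically, by a radial straight-line deformation for dense samples or by the finite ring-of-disks nerve as a fallback, whereas the paper simply points to the computed Figure~\ref{fig:saturation}. As in the paper, your two pairs agree only on the band, not globally. Your ``equal image'' should therefore be read as band-wise agreement, which is exactly what item~(1) asserts.
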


So $\Delta\chi$ is a genuine coarsening. What it buys for the coarsening is a bundle of properties that the finer invariants do \emph{not} possess at once {(in the table, $N = O(n^{\lceil d/2\rceil})$ denotes the total number of simplices in the underlying complex, so the reduction-based invariants' $O(N^3)$ cost is cubic in that count)}:
\begin{center}
\renewcommand{\arraystretch}{1.2}
\begin{tabular}{@{}lccc@{}}
\toprule
 & \textbf{Intersection ECP} $\Delta\chi$ & Mixup barcode & Chromatic six-pack \\
\midrule
Symmetric in the clouds        & yes & no ($L \hookrightarrow K$) & yes \\
Native to $k > 2$              & yes & no (open) & colored only \\
Reduction-free to compute      & yes (signed count) & no ($O(N^3)$) & no ($O(N^3)$) \\
{Exact integer readout}\footnotemark   & {yes (signed count)} & {no (real diagram)} & {no (real diagram)} \\
Axiomatically forced           & yes (Thms~\ref{thm:universality},~\ref{thm:universality_strong}) & no & no \\
Inference-ready (no vectorization) & yes (scalar curve) & no (diagram) & no (diagram) \\
\bottomrule
\end{tabular}
\end{center}
\footnotetext{ This row is not a deficiency of the diagram invariants at their own task---recovering a stable summary of the interaction---but a structural difference: $\Delta\chi$ is integer-valued, so at scales regular for the sample it returns the exact target value (Section~\ref{sec:consistency}), whereas a real-valued persistence diagram is guaranteed only Lipschitz-continuously, never exactly.}
Every richer invariant forfeits at least one row. The mixup barcode is asymmetric, pairwise, reduction-bound, and untested; the six-pack is symmetric but tied to the Euclidean chromatic Delaunay construction, six times as large, and likewise carries no stability, sampling, or inference theory for the interaction. $\Delta\chi$ is the \emph{richest} interaction invariant compatible with the whole bundle: an integer-valued, scale-indexed \emph{curve} one can do exact recovery and statistics on directly, forced by the axioms of Section~\ref{sec:intersection}.

This is decisive precisely in the measurement regime. Because $\Delta\chi$ needs no boundary-matrix reduction, it scales as $O(n^{\lceil d/2\rceil}\log n)$ rather than cubically in the simplex count---the difference between feasible and infeasible when a study must sweep many measurements across layers, epochs, and seeds. And being defined for every $k$, it can \emph{pose} questions the pairwise invariants cannot---whether three classes are jointly entangled beyond what their pairs already show.

We do not pretend the coarsening is free. Where it costs---an overlap whose Betti numbers cancel---the relative-homology refinement of Section~\ref{sec:relative} recovers { most of} what $\chi$ discards, at the price of the computational simplicity that motivated $\Delta\chi$ in the first place. The profile is the shipped invariant; the refinement is its principled escalation.

\section{Construction of the Intersection ECP}\label{sec:theory}

This section defines the Intersection ECP through constructible functions and the Euler calculus.
We set up the data and the algebraic stage through which the construction factors (Sections~\ref{sec:categories}--\ref{sec:euler_integral}), define the Intersection ECP and prove the Intersection Theorem (Section~\ref{sec:mixup_def}), and display the theorem as a commuting square (Section~\ref{sec:intersection}). The construction accommodates an arbitrary number $k$ of point clouds; the two-cloud case ($k = 2$) is a specialization.

\subsection{Data and Algebraic Stage}\label{sec:categories}

We fix the data the construction acts on, the algebraic stage through which it factors, and the space in which its output lives.
The Intersection ECP is invariant under rigid motions and scale-equivariant and $L^1$-stable under bi-Lipschitz maps (Lemma~\ref{lem:rigid_invariance}, Lemma~\ref{lem:scale_equivariance}, Theorem~\ref{thm:algebraic_stability}).

\paragraph{ Homology conventions.} { Throughout the paper we fix a field $\mathbb{F}$, and all homology groups are taken with coefficients in $\mathbb{F}$. We fix a field for two reasons: it makes the regularity condition of Definition~\ref{def:regular_window} the weakest that suffices---an integral-homology condition would additionally forbid torsion changes, which $\chi$ never sees\footnote{Though which windows qualify can depend on the choice of $\mathbb{F}$.}---and it matches the barcode notion of critical scale used in Theorem~\ref{thm:algebraic_stability}(2), where a critical scale is a bar endpoint and interval decompositions exist only over a field. Every conclusion about $\Delta\chi$ is nonetheless coefficient-free, $\chi$ being the alternating Betti sum over every field.}

\begin{definition}[Disjoint point-cloud tuples and their maps]\label{def:cat_pcpairs}
Let $\mathbf{PC}_k(\R^d)$ denote the collection of $k$-tuples $\mathcal{X} = (X_1, \ldots, X_k)$ of pairwise disjoint\footnote{Disjointness is a normalization, not a hypothesis---the Intersection Theorem (Theorem~\ref{thm:intersection}) and the algebraic formula hold for arbitrary finite point sets.
We impose it so that interaction vanishes at scale zero ($\Delta\chi(\mathbf{0}) = \chi(\bigcap_i X_i) = 0$, i.e.\ interaction is geometric, not an artifact of shared points), the minimum cross-cloud distance is positive, and the cloudwise map condition below is well posed.
Relaxing it changes only the zero baseline to $\Delta\chi(\mathbf{0}) = |\bigcap_i X_i|$.} finite subsets of $\R^d$.
We compare two tuples $\mathcal{X}, \mathcal{X}'$ through ambient maps that match the clouds cloudwise: a homeomorphism $f : \R^d \to \R^d$ with $f(X_i) = X'_i$ for each $i$.
We use two classes:
\begin{itemize}
    \item \textbf{Rigid motions} ($C = 1$): $\mathcal{X}'$ is a rigid motion of $\mathcal{X}$, i.e.\ $f$ is an isometry.
    \item \textbf{Ambient $C$-bi-Lipschitz homeomorphisms}: there exists $C \geq 1$ with
    \[
        C^{-1}\|x - y\| \;\leq\; \|f(x) - f(y)\| \;\leq\; C\|x - y\| \qquad \text{for all } x, y \in \R^d.
    \]
\end{itemize}
\end{definition}

The single global constant $C$ is what matters: it controls both the within-cloud distances ($x, y$ in the same $X_i$) and the \emph{cross-cloud} distances ($x \in X_i$, $y \in X_j$, $i \neq j$) that determine when ball unions meet, and hence the relative geometry governing interaction.
We work with the ambient map throughout; its restriction $f|_{\sqcup_i X_i}$ recovers a cloudwise description.

\begin{definition}[The algebraic stage: constructible functions]\label{def:cat_cf}
Let $\mathbf{CF}(\R^d)$ be the commutative ring (non unital, since
$\mathbf{1}_{\R^d}$ is not compactly supported) of constructible functions $\varphi : \R^d \to \Z$---finite $\Z$-linear combinations of indicators of compact semi-algebraic sets---under pointwise addition and multiplication.
\end{definition}

It is the intermediate stage of the factorization~\eqref{eq:factorization}, entered through the offset map and exited through the Euler integral.
Two features of its structure are used:
\begin{itemize}
    \item The product step $\prod$ multiplies indicators, and the product of indicators is the indicator of the intersection, $\mathbf{1}_A \cdot \mathbf{1}_B = \mathbf{1}_{A \cap B}$; this is the idempotent set $\{\varphi : \varphi^2 = \varphi\}$ where the offset map $\mathcal{U}$ lands.
    \item The Euler integral $\int d\chi : (\mathbf{CF}(\R^d), +) \to (\Z, +)$ is a homomorphism of additive groups (Section~\ref{sec:euler_integral}) but \emph{not} order-preserving for the pointwise order $\varphi \leq \psi$: e.g.\ $\mathbf{1}_{\{0,1\}} \leq \mathbf{1}_{[0,1]}$ yet $\int d\chi$ gives $2 > 1$.
\end{itemize}

All sets arising below are finite intersections of finite unions of closed balls, hence compact semi-algebraic; we use the algebraic stage in the standard generality since the sampling theory of Section 5 works with arbitrary compact (semi-algebraic) sets.

\begin{definition}[Target: integer-valued profiles with the $L^1$ metric]\label{def:cat_bifunc}
Let $\mathbf{ZFun}_k$ be the collection of piecewise-constant integer-valued profiles $\R_{\geq 0}^k \to \Z$, equipped with the pointwise partial order $\leq$ and the $L^1$ distance $d_{L^1}$ of Definition~\ref{def:l1_distance}, with respect to which the \emph{diagonal} Intersection ECP is stable (Theorem~\ref{thm:algebraic_stability}).
\end{definition}

\subsection{The Euler Integral as a Valuation}\label{sec:euler_integral}

The bridge between the geometric (point cloud) and algebraic (integer-valued profile) worlds is the Euler integral; see \citet{schapira1991operations}, \citet{curry2014sheaves}, and the survey \citet{curry2012euler} for comprehensive treatments.

\begin{definition}[Euler integral]\label{def:euler_integral}
The \textbf{Euler integral} is the $\Z$-linear functional
\[
    \int d\chi : (\mathbf{CF}(\R^d),\, +) \;\to\; (\Z,\, +)
\]
defined by $\int \mathbf{1}_A \, d\chi = \chi(A)$ for compact semi-algebraic $A$, and extended $\Z$-linearly: $\int \sum_i c_i \mathbf{1}_{A_i} \, d\chi = \sum_i c_i \chi(A_i)$.
\end{definition}

This extension is well-defined.
On the \emph{lattice} of compact semi-algebraic sets (closed under finite $\cup$ and $\cap$, but not under complement, so not a Boolean algebra), $\chi$ is a \emph{valuation} \citep{viro1988some}: it satisfies the inclusion--exclusion identity $\chi(A \cup B) = \chi(A) + \chi(B) - \chi(A \cap B)$, which holds by compatible semi-algebraic triangulation.
The relations among indicators of such sets are generated precisely by $\mathbf{1}_{A \cup B} + \mathbf{1}_{A \cap B} = \mathbf{1}_A + \mathbf{1}_B$ (together with $\mathbf{1}_\emptyset = 0$), which $\chi$ respects; hence the $\Z$-linear extension is independent of the representation chosen for a constructible function (the valuation extension theorem; see \citep{schapira1991operations, curry2012euler}).
Compactness is essential: on compact semi-algebraic sets the ordinary Euler characteristic agrees with its compactly supported version, for which inclusion--exclusion holds, whereas the identity fails for the ordinary Euler characteristic of non-compact semi-algebraic sets (an open interval has $\chi = 1$ but compactly supported Euler characteristic $-1$).

A first worked evaluation, for compact semi-algebraic $A$ and $B$: although $B^c$ is not compact, the product $\mathbf{1}_A \mathbf{1}_{B^c} = \mathbf{1}_A - \mathbf{1}_{A \cap B}$ is constructible, and
\[
    \int \mathbf{1}_A \mathbf{1}_{B^c}\, d\chi \;=\; \chi(A) - \chi(A \cap B),
\]
the part of $A$'s Euler characteristic not accounted for by its overlap with $B$.

Although $\int d\chi$ is \emph{not} a ring homomorphism for pointwise multiplication,\footnote{$\int \mathbf{1}_A \cdot \mathbf{1}_B \, d\chi = \chi(A \cap B) \neq \chi(A) \cdot \chi(B)$ in general: e.g.\ if $A = B$ is a two-point set, then $\chi(A) \cdot \chi(A) = 4$ but $\chi(A \cap A) = 2$.} the following product identity holds.

\begin{proposition}[Euler product formula]\label{prop:euler_product}
For indicator constructible functions $\mathbf{1}_{A_1}, \ldots, \mathbf{1}_{A_k}$ with each $A_i$ compact and semi-algebraic:
\begin{equation}\label{eq:euler_product}
    \int_{\R^d} \prod_{i=1}^{k} \mathbf{1}_{A_i} \, d\chi = \chi\!\left(\bigcap_{i=1}^{k} A_i\right).
\end{equation}
\end{proposition}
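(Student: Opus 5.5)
The plan is to reduce the identity to two facts already fixed in Section~\ref{sec:euler_integral}: pointwise products of indicators are indicators of intersections, and the Euler integral of an indicator of a compact semi-algebraic set is its Euler characteristic (Definition~\ref{def:euler_integral}). No inclusion--exclusion manipulation should be needed beyond the well-definedness of $\int d\chi$, which the paper already established via the valuation extension theorem.

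\textbf{Step 1 (pointwise identity).} For every $x \in \R^d$, the product $\prod_{i=1}^k \mathbf{1}_{A_i}(x)$ is a product of values in $\{0,1\}$. It therefore equals $1$ exactly when $x \in A_i$ for all $i$, and $0$ otherwise. Hence $\prod_i \mathbf{1}_{A_i} = \mathbf{1}_{\bigcap_i A_i}$ as functions $\R^d \to \Z$. This can also be argued by induction on $k$ from $\mathbf{1}_A\mathbf{1}_B = \mathbf{1}_{A\cap B}$.

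\textbf{Step 2 (membership in the stage).} The set $\bigcap_{i=1}^k A_i$ is a finite intersection of compact semi-algebraic sets. It is therefore compact, being closed in a compact set, and semi-algebraic, since the semi-algebraic sets form a Boolean algebra. So $\mathbf{1}_{\bigcap_i A_i}$ is a single-term element of $\mathbf{CF}(\R^d)$, and the left-hand side of \eqref{eq:euler_product} is a legitimate Euler integral. The empty intersection case is covered by the convention $\mathbf{1}_\emptyset = 0$ and $\chi(\emptyset)=0$.

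\textbf{Step 3 (evaluate).} Apply Definition~\ref{def:euler_integral} to the representation $\mathbf{1}_{\bigcap_i A_i}$ to obtain $\chi(\bigcap_i A_i)$. Because the $\Z$-linear extension is independent of representation, the value does not depend on whether the function was presented as a product or as a single indicator.

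The only point I expect to need any care is Step~3's reliance on representation-independence. The product $\prod_i \mathbf{1}_{A_i}$ is not itself written as a $\Z$-combination of indicators, so the value assigned to it must be that of any such representation. This is exactly the well-definedness already argued in the text, so it can be cited rather than reproved. Everything else is elementary.
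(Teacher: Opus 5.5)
Your proposal is correct and follows the paper's own proof. The paper likewise observes that $\prod_i \mathbf{1}_{A_i} = \mathbf{1}_{\bigcap_i A_i}$ pointwise and then applies the definition of the Euler integral; your Steps~2 and~3 only make explicit what the paper leaves implicit, namely that compact semi-algebraic sets are closed under finite intersection and that the value of $\int d\chi$ is independent of the representation.
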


\begin{proof}
Pointwise $\prod_i \mathbf{1}_{A_i} = \mathbf{1}_{\bigcap_i A_i}$, so $\int \prod_i \mathbf{1}_{A_i}\, d\chi = \chi(\bigcap_i A_i)$ by definition of the Euler integral.
\end{proof}

Despite its elementary proof, this formula is the engine of the theory: it converts pointwise multiplication in $\mathbf{CF}(\R^d)$ into set intersection under the Euler integral, and the Intersection Theorem (Theorem~\ref{thm:intersection}) is a direct consequence.

\subsection{The Intersection ECP}\label{sec:mixup_def}

We now define the central construction.

Given a finite point cloud $\mathcal{P} \subset \R^d$ and scale $t \geq 0$, the ball union $\mathcal{U}(\mathcal{P}; t) = \bigcup_{p \in \mathcal{P}} B(p, t)$ defines an element $\mathbf{1}_{\mathcal{U}(\mathcal{P}; t)} \in \mathbf{CF}(\R^d)$, giving a scale-parametrized constructible function $\mathbf{1}_{\mathcal{U}(\mathcal{P}; \cdot)}$ associated to $\mathcal{P}$.

\begin{definition}[Intersection Euler Characteristic Profile]\label{def:mixup_ec}
The \textbf{Intersection Euler Characteristic Profile} (Intersection ECP) is the assignment $\mathcal{M}$ that sends each tuple $\mathcal{X} = (X_1, \ldots, X_k) \in \mathbf{PC}_k(\R^d)$ to the function $\mathcal{M}(\mathcal{X}) : \R_{\geq 0}^k \to \Z$ defined by
\begin{equation}\label{eq:delta_chi}
    \mathcal{M}(\mathcal{X})(\mathbf{t}) \;=\; \Delta\chi(\mathbf{t};\, \mathcal{X}) \;=\; \int_{\R^d} \prod_{i=1}^{k} \mathbf{1}_{\mathcal{U}(X_i;\, t_i)} \, d\chi,
\end{equation}
where $\mathbf{t} = (t_1, \ldots, t_k) \in \R_{\geq 0}^k$.
\end{definition}

We take this profile as \emph{the} Intersection ECP.
It is the top floor $\chi(C_k)$ of the interaction spectrum $\{\chi(C_j)\}_{j=2}^{k}$; the full spectrum is a canonical and equally stable refinement, developed in Definition~\ref{def:floors} and recommended when finer $k$-cloud resolution ($k\ge3$) is wanted.
For $k=2$ the spectrum has a single floor, so it coincides with {$\Delta\chi = \mathcal{M}(\mathcal{X})(\mathbf{t})$}.

The assignment factors through three compositional steps:
\begin{equation}\label{eq:factorization}
    \mathbf{PC}_k(\R^d) \xrightarrow{\;\mathcal{U}\;} \mathbf{CF}(\R^d)^k \xrightarrow{\;\prod\;} \mathbf{CF}(\R^d) \xrightarrow{\;\textstyle\int d\chi\;} \Z,
\end{equation}
where $\mathcal{U}(\mathcal{X}) = (\mathbf{1}_{\mathcal{U}(X_1;\cdot)}, \ldots, \mathbf{1}_{\mathcal{U}(X_k;\cdot)})$ is the \textbf{offset map} (assigning scale-parametrized indicator functions), $\prod$ is pointwise multiplication in $\mathbf{CF}(\R^d)$, and $\int d\chi$ is the Euler integral.

The factorization~\eqref{eq:factorization} separates three conceptual steps: the offset map $\mathcal{U}$ encodes the \emph{geometry} of each cloud at each scale, the product $\prod$ the \emph{interaction} between clouds, and the Euler integral $\int d\chi$ the \emph{topological summary}.
Each is invariant under rigid motions and metrically continuous under bi-Lipschitz maps, so their composite $\mathcal{M}$ is rigid-motion invariant (Lemma~\ref{lem:rigid_invariance}), scale-equivariant under bi-Lipschitz homeomorphisms (Lemma~\ref{lem:scale_equivariance}), and $L^1$-stable under Hausdorff perturbations (Theorem~\ref{thm:algebraic_stability}).

The name \emph{Euler Characteristic Profile} also appears in \citet{dlotko2023euler}, denoting $\mathbf{p} \mapsto \chi(K_{\mathbf{p}})$ for a multifiltration of a \emph{single} complex $K$---an \emph{intrinsic} multiparameter structure.
Our Intersection ECP is different in kind: its $k$ parameters are independent scales for $k$ \emph{separate} clouds, an \emph{extrinsic} structure measuring the interaction \emph{between} datasets rather than the topology of one.

The central identity is available at once, so we state it here; we display it as a commuting square in Section~\ref{sec:intersection}.

\begin{theorem}[Multiparameter Intersection Theorem]\label{thm:intersection}
For pairwise disjoint finite point clouds $X_1, \ldots, X_k \subset \R^d$ and any $\mathbf{t} \in \R_{\geq 0}^k$:
\begin{equation}\label{eq:intersection}
    \Delta\chi(\mathbf{t};\, X_1, \ldots, X_k) = \chi\!\left(\bigcap_{i=1}^{k} \mathcal{U}(X_i;\, t_i)\right).
\end{equation}
\end{theorem}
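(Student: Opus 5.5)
The plan is to unwind Definition~\ref{def:mixup_ec} and apply the Euler product formula (Proposition~\ref{prop:euler_product}) with $A_i = \mathcal{U}(X_i; t_i)$. Doing so first requires checking its hypothesis, namely that each $A_i$ is compact and semi-algebraic. This is the only step with any content, and I would handle it as follows. Each closed ball $B(p, t_i) = \{x : \|x - p\|^2 \le t_i^2\}$ is cut out by a single polynomial inequality, so it is compact and semi-algebraic. At $t_i = 0$ it degenerates to the point $\{p\}$, which is still compact and semi-algebraic. A finite union of such sets is again compact and semi-algebraic, because $X_i$ is finite and semi-algebraic sets are closed under finite unions. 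Hence $\mathbf{1}_{\mathcal{U}(X_i;t_i)} \in \mathbf{CF}(\R^d)$ for every $\mathbf{t}\in\R^k_{\ge 0}$.

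Next I would note that the intersection $\bigcap_i \mathcal{U}(X_i;t_i)$ is a finite intersection of compact semi-algebraic sets, so it is again compact and semi-algebraic (possibly empty). Its Euler characteristic is therefore well defined, and it is the value the Euler integral assigns to its indicator. The pointwise identity $\prod_i \mathbf{1}_{A_i} = \mathbf{1}_{\bigcap_i A_i}$ holds because a product of $\{0,1\}$-valued functions equals $1$ exactly when every factor equals $1$. Combining this with Proposition~\ref{prop:euler_product} gives
\[
\Delta\chi(\mathbf{t};X_1,\dots,X_k)=\int_{\R^d}\prod_{i=1}^k \mathbf{1}_{\mathcal{U}(X_i;t_i)}\,d\chi=\chi\Bigl(\bigcap_{i=1}^k \mathcal{U}(X_i;t_i)\Bigr).
\]

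I expect no real obstacle. The only points needing care are the degenerate cases: $t_i=0$, where the ball union is the finite set $X_i$, and an empty intersection, where the convention $\mathbf{1}_\emptyset=0$ gives $\chi(\emptyset)=0$, consistent with the well-definedness discussion in Section~\ref{sec:euler_integral}. Pairwise disjointness is not used, in line with the footnote to Definition~\ref{def:cat_pcpairs}. As a sanity check I would also record the $k=2$ specialization: by the valuation property $\chi(A\cup B)=\chi(A)+\chi(B)-\chi(A\cap B)$, the theorem recovers the defect form~\eqref{eq:delta_chi_intro}.
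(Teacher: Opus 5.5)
Your proposal is correct and matches the paper's proof, which also just applies the Euler product formula (Proposition~\ref{prop:euler_product}) to Definition~\ref{def:mixup_ec}. Your explicit check that each $\mathcal{U}(X_i;t_i)$ is compact and semi-algebraic, including the degenerate case $t_i = 0$, verifies a hypothesis the paper leaves implicit; the paper only remarks that all sets arising are finite intersections of finite unions of closed balls.
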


\begin{proof}
Apply the Euler product formula (Proposition~\ref{prop:euler_product}) to Definition~\ref{def:mixup_ec}:
\[
    \Delta\chi(\mathbf{t}) = \int_{\R^d} \prod_{i=1}^{k} \mathbf{1}_{\mathcal{U}(X_i;\, t_i)} \, d\chi = \chi\!\left(\bigcap_{i=1}^{k} \mathcal{U}(X_i;\, t_i)\right). \qedhere
\]
\end{proof}

\begin{lemma}[Rigid-motion invariance]\label{lem:rigid_invariance}
If $\mathcal{X}'$ is a rigid motion of $\mathcal{X}$ ({the image under} an ambient isometry $f$ with $f(X_i) = X'_i$), then $\mathcal{M}(\mathcal{X}) = \mathcal{M}(\mathcal{X}')$: the Intersection ECP depends only on the rigid-motion class of $\mathcal{X}$.
\end{lemma}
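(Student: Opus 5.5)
The plan is to show that an ambient isometry $f$ carries every overlap of ball unions homeomorphically onto the corresponding overlap for $\mathcal{X}'$, and then to apply the Intersection Theorem (Theorem~\ref{thm:intersection}) to both tuples. Since $f$ is an isometry of $\R^d$, it maps closed balls to closed balls of the same radius: $f(B(p,t)) = B(f(p),t)$. Surjectivity holds because every isometry of Euclidean space is affine, hence bijective. Taking unions over $p \in X_i$ and using $f(X_i) = X'_i$ gives $f(\mathcal{U}(X_i;t_i)) = \mathcal{U}(X'_i;t_i)$ for each $i$ and each $t_i \ge 0$.

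Because $f$ is a bijection, images commute with intersections. So for every $\mathbf{t} \in \R_{\ge 0}^k$,
\[
f\Bigl(\bigcap_{i=1}^k \mathcal{U}(X_i;t_i)\Bigr) \;=\; \bigcap_{i=1}^k \mathcal{U}(X'_i;t_i).
\]
The restriction of $f$ is therefore a homeomorphism between the two compact sets $\bigcap_i \mathcal{U}(X_i;t_i)$ and $\bigcap_i \mathcal{U}(X'_i;t_i)$.

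The Euler characteristic is a homotopy invariant, so it is in particular a homeomorphism invariant. Combining this with Theorem~\ref{thm:intersection} applied to $\mathcal{X}$ and to $\mathcal{X}'$ gives
\[
\mathcal{M}(\mathcal{X})(\mathbf{t}) = \chi\Bigl(\bigcap_i \mathcal{U}(X_i;t_i)\Bigr) = \chi\Bigl(\bigcap_i \mathcal{U}(X'_i;t_i)\Bigr) = \mathcal{M}(\mathcal{X}')(\mathbf{t}).
\]
Theorem~\ref{thm:intersection} applies to $\mathcal{X}'$ because the $X'_i$ are pairwise disjoint, being images of pairwise disjoint sets under a bijection. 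The identity holds for every $\mathbf{t}$, so the two profiles coincide.

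There is no genuine obstacle; the argument only needs one small piece of care. One must check that the Euler characteristic used by the Euler integral (Definition~\ref{def:euler_integral}) is the topological one, so that it is invariant under homeomorphism. On compact semi-algebraic sets this is immediate: these sets are triangulable, and the Euler characteristic equals the alternating sum of Betti numbers with coefficients in $\mathbb{F}$, which is a topological invariant. Alternatively, one can note that an isometry is affine, hence semi-algebraic, so it preserves the semi-algebraic Euler characteristic directly.
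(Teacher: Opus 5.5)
Your proposal is correct and follows essentially the same route as the paper: the isometry carries each ball union, and (by bijectivity) each overlap, homeomorphically onto its counterpart, and the Intersection Theorem plus homeomorphism invariance of $\chi$ gives equality at every $\mathbf{t}$. Your extra checks (surjectivity of $f$, disjointness of the image clouds, and the topological nature of $\chi$ on compact semi-algebraic sets) are sound additions that the paper leaves implicit.
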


\begin{proof}
An isometry $f$ maps each ball $B(x, t_i)$ to $B(f(x), t_i)$, hence carries $\mathcal{U}(X_i; t_i)$ onto $\mathcal{U}(X'_i; t_i)$ and, being a bijection, $\bigcap_i \mathcal{U}(X_i; t_i)$ onto $\bigcap_i \mathcal{U}(X'_i; t_i)$.
Since $\chi$ is a homeomorphism invariant, $\Delta\chi(\mathbf{t}; \mathcal{X}) = \chi(\bigcap_i \mathcal{U}(X_i; t_i)) = \chi(\bigcap_i \mathcal{U}(X'_i; t_i)) = \Delta\chi(\mathbf{t}; \mathcal{X}')$ for every $\mathbf{t}$, using the Intersection Theorem (Theorem~\ref{thm:intersection}).
\end{proof}

The next lemma, and the stability theory below (Theorem~\ref{thm:algebraic_stability}), need a name for the scale regions on which the topology of the overlap does not move.

\begin{definition}[Window free of homologically critical scales]\label{def:regular_window}
Let $\mathcal{X} \in \mathbf{PC}_k(\R^d)$. A window $W = \prod_i [a_i, b_i] \subset \R_{\geq 0}^k$ is \textbf{free of homologically critical scales} of $\mathcal{X}$ if for all $\mathbf{s}, \mathbf{t} \in W$ with $s_i \leq t_i$ for every $i$, the inclusion
\[
    \bigcap_{i} \mathcal{U}(X_i;\, s_i) \;\hookrightarrow\; \bigcap_{i} \mathcal{U}(X_i;\, t_i)
\]
induces isomorphisms on homology {(over the fixed field $\mathbb{F}$)} in every degree.
\end{definition}

On such a window all Betti numbers of the overlap, and hence $\Delta\chi$, are constant.
{ Dually, a \textbf{critical scale} of $\mathcal{X}$ is a homological critical value of the intersection filtration $\{\bigcap_i \mathcal{U}(X_i; t)\}_t$---a birth or death scale of some bar, i.e.\ a scale where the field homology of the intersection changes---\emph{not} merely a scale at which the integer $\Delta\chi$ jumps; a simultaneous $\beta_1$-death and $\beta_3$-birth, say, is critical even though $\chi$ is unchanged.}
The condition concerns the \emph{intersection filtration} of $\mathcal{X}$, not the profile $\mathcal{M}(\mathcal{X})$: the profile can be constant across a scale at which the homology moves.

We write $\mathrm{PH}_*(\mathcal{X})$ for the persistence module $\{H_*(\bigcap_i \mathcal{U}(X_i; t_i);\, \mathbb{F})\}_{\mathbf{t}}$ of this intersection filtration, and $d_I$ for the interleaving distance on persistence modules; on a one-parameter module---in particular the diagonal $t_1 = \cdots = t_k$---$d_I$ coincides with the bottleneck distance of barcodes by the isometry theorem \citep{chazal2016structure}. We use $\mathrm{PH}_*$ and $d_I$ from here on, including for the relative modules of Section~\ref{sec:relative}. Both attach to the \emph{module}, not to the integer profile: $\Delta\chi$ itself admits no interleaving distance (Remark~\ref{rem:interleaving_comparison}).

\begin{lemma}[Scale-equivariance under bi-Lipschitz maps]\label{lem:scale_equivariance}
Let $f : \R^d \to \R^d$ be an ambient $C$-bi-Lipschitz homeomorphism with $f(X_i) = X'_i$.
Then for every $\mathbf{t}$,
\begin{equation}\label{eq:functor_morphism}
    \bigcap_{i} \mathcal{U}(X'_i;\, C^{-1}t_i) \;\subseteq\; f\!\Big(\bigcap_{i} \mathcal{U}(X_i;\, t_i)\Big) \;\subseteq\; \bigcap_{i} \mathcal{U}(X'_i;\, C\,t_i),
\end{equation}
whose middle set is homeomorphic to $\bigcap_i \mathcal{U}(X_i; t_i)$. If moreover the window $\prod_i[C^{-2}t_i,\, C^{2}t_i]$ is free of homologically critical scales of both $\mathcal{X}$ and $\mathcal{X}'$ (Definition~\ref{def:regular_window}), then
\[
    \Delta\chi(C^{-1}\mathbf{t};\, \mathcal{X}') \;=\; \Delta\chi(\mathbf{t};\, \mathcal{X}) \;=\; \Delta\chi(C\mathbf{t};\, \mathcal{X}').
\]

\end{lemma}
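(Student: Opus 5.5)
The proof splits into two parts: the set-level sandwich \eqref{eq:functor_morphism}, and then the equality of Euler characteristics deduced from it on a critical-free window.

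\textbf{The sandwich.} Since $f$ is a bijection, $f(\bigcap_i A_i)=\bigcap_i f(A_i)$. It therefore suffices to prove, for each single cloud, that
\[
\mathcal{U}(X'_i;C^{-1}t_i)\subseteq f(\mathcal{U}(X_i;t_i))\subseteq \mathcal{U}(X'_i;Ct_i),
\]
and then intersect over $i$.

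For the right inclusion, take $y=f(x)$ with $\|x-p\|\le t_i$ for some $p\in X_i$. Then $\|f(x)-f(p)\|\le Ct_i$, and $f(p)\in X'_i$.

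For the left inclusion, take $y$ with $\|y-f(p)\|\le C^{-1}t_i$ and write $y=f(x)$, using surjectivity of $f$. The lower Lipschitz bound gives $C^{-1}\|x-p\|\le\|y-f(p)\|\le C^{-1}t_i$, so $\|x-p\|\le t_i$.

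The middle set $f(\bigcap_i \mathcal{U}(X_i;t_i))$ is homeomorphic to $\bigcap_i\mathcal{U}(X_i;t_i)$ because $f$ restricts to a homeomorphism onto its image. Hence its Euler characteristic equals $\Delta\chi(\mathbf{t};\mathcal{X})$ by Theorem~\ref{thm:intersection}, provided $\chi$ of that compact set is understood via singular homology. Write $A_\mathbf{s}=\bigcap_i\mathcal{U}(X_i;s_i)$ and $A'_\mathbf{s}$ for the analogous sets of $\mathcal{X}'$.

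\textbf{The equality.} I would use an interleaving argument. Applying the sandwich to $f$ and to $f^{-1}$, which is also $C$-bi-Lipschitz, gives a chain of inclusions
\[
A'_{C^{-1}\mathbf{t}}\subseteq f(A_\mathbf{t})\subseteq A'_{C\mathbf{t}}\subseteq f(A_{C^2\mathbf{t}}).
\]
The same pattern shifted downward gives $f(A_{C^{-2}\mathbf{t}})\subseteq A'_{C^{-1}\mathbf{t}}$. So we have a zigzag
\[
f(A_{C^{-2}\mathbf{t}})\subseteq A'_{C^{-1}\mathbf{t}}\subseteq f(A_\mathbf{t})\subseteq A'_{C\mathbf{t}}\subseteq f(A_{C^2\mathbf{t}}),
\]
in which every index lies in the window $\prod_i[C^{-2}t_i,C^2t_i]$.

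Pass to homology over $\mathbb{F}$. Because $f$ is a homeomorphism, the composite $f(A_{C^{-2}\mathbf{t}})\hookrightarrow f(A_\mathbf{t})$ is conjugate to the inclusion $A_{C^{-2}\mathbf{t}}\hookrightarrow A_\mathbf{t}$, which is an isomorphism on homology by the criticality-free hypothesis for $\mathcal{X}$. Likewise $A'_{C^{-1}\mathbf{t}}\hookrightarrow A'_{C\mathbf{t}}$ is an isomorphism by the hypothesis for $\mathcal{X}'$.

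Now apply the standard two-out-of-six argument to the chain $A'_{C^{-1}\mathbf{t}}\to f(A_\mathbf{t})\to A'_{C\mathbf{t}}$. The composite is an isomorphism, so the first map is injective and the second surjective. The other composite $f(A_{C^{-2}\mathbf{t}})\to A'_{C^{-1}\mathbf{t}}\to f(A_\mathbf{t})$ is an isomorphism, so the second map of that pair, $A'_{C^{-1}\mathbf{t}}\to f(A_\mathbf{t})$, is surjective. That map is therefore bijective, and so is $f(A_\mathbf{t})\to A'_{C\mathbf{t}}$.

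Consequently all Betti numbers of $A'_{C^{-1}\mathbf{t}}$, $A_\mathbf{t}$ and $A'_{C\mathbf{t}}$ agree. These are finite, since the sets are compact semi-algebraic. Taking alternating sums and applying Theorem~\ref{thm:intersection} gives both claimed equalities.

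\textbf{Main obstacle.} The delicate step is bookkeeping rather than topology. First, the shifted inclusions via $f^{-1}$ must be set up correctly, and every scale used must stay within $[C^{-2}t_i,C^2t_i]$. Second, the inclusions must be related to the window definition, which requires coordinatewise-ordered scale pairs such as $C^{-2}\mathbf{t}\le\mathbf{t}$. If the two-out-of-six argument proves awkward, I would fall back on a cruder route: show the injection $H_*(A'_{C^{-1}\mathbf{t}})\hookrightarrow H_*(f(A_\mathbf{t}))$ and the surjection $H_*(f(A_\mathbf{t}))\twoheadrightarrow H_*(A'_{C\mathbf{t}})$, so that $\beta(A')\le\beta(A)\le\beta(A')$. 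This uses only the constancy of the Betti numbers of $A'$ on the window.
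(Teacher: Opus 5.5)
Your main argument is correct and is essentially the paper's proof. You use the same sandwich, intersected over $i$ using bijectivity of $f$, and the same five-term tower $f(A_{C^{-2}\mathbf{t}})\subseteq A'_{C^{-1}\mathbf{t}}\subseteq f(A_{\mathbf{t}})\subseteq A'_{C\mathbf{t}}\subseteq f(A_{C^{2}\mathbf{t}})$ obtained via $f^{-1}$. The interleaving step is also the paper's, except that on the $\mathcal{X}$ side you need only the composite $f(A_{C^{-2}\mathbf{t}})\to f(A_{\mathbf{t}})$, which suffices.

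Discard the fallback, though. The injection $H_*(A'_{C^{-1}\mathbf{t}})\hookrightarrow H_*(f(A_{\mathbf{t}}))$ and the surjection $H_*(f(A_{\mathbf{t}}))\twoheadrightarrow H_*(A'_{C\mathbf{t}})$ both give the same inequality $\beta(A'_{C^{\pm1}\mathbf{t}})\le\beta(A_{\mathbf{t}})$. So constancy on the $\mathcal{X}'$ side alone never bounds $\beta(A_{\mathbf{t}})$ from above. That upper bound needs the $\mathcal{X}$-side hypothesis, exactly as your two-out-of-six step uses it.
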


\begin{proof}
For a $C$-bi-Lipschitz homeomorphism $f$ and any $x$, $B(f(x), C^{-1}r) \subseteq f(B(x,r)) \subseteq B(f(x), Cr)$.
Taking unions over $x \in X_i$ and using $f(X_i) = X'_i$ gives $\mathcal{U}(X'_i; C^{-1}t_i) \subseteq f(\mathcal{U}(X_i; t_i)) \subseteq \mathcal{U}(X'_i; C t_i)$; intersecting over $i$ and using $f(\bigcap_i A_i) = \bigcap_i f(A_i)$ (as $f$ is a bijection) yields \eqref{eq:functor_morphism}.
Write $F_{\mathbf{s}} = \bigcap_i \mathcal{U}(X_i;\, s_i)$ and $G_{\mathbf{s}} = \bigcap_i \mathcal{U}(X'_i;\, s_i)$.
Since $f^{-1}$ is also $C$-bi-Lipschitz with $f^{-1}(X'_i) = X_i$, applying \eqref{eq:functor_morphism} to $f^{-1}$ at the scales $C^{\pm 1}\mathbf{t}$ extends the sandwich to the interleaved tower
\[
    f(F_{C^{-2}\mathbf{t}}) \;\subseteq\; G_{C^{-1}\mathbf{t}} \;\subseteq\; f(F_{\mathbf{t}}) \;\subseteq\; G_{C\mathbf{t}} \;\subseteq\; f(F_{C^{2}\mathbf{t}}).
\]
On $H_*(-;\mathbb{F})$ the window hypotheses make the outer composites isomorphisms: for $\mathcal{X}$ the two composites along the $f$-images (which agree with those of $F_{C^{-2}\mathbf{t}} \subseteq F_{\mathbf{t}} \subseteq F_{C^{2}\mathbf{t}}$ up to conjugation by $f_*$), and for $\mathcal{X}'$ the composite $G_{C^{-1}\mathbf{t}} \to G_{C\mathbf{t}}$.
By the standard interleaving argument, the two inner inclusions $G_{C^{-1}\mathbf{t}} \subseteq f(F_{\mathbf{t}}) \subseteq G_{C\mathbf{t}}$ induce isomorphisms on $H_*$.
Hence $F_{\mathbf{t}}$, $G_{C^{-1}\mathbf{t}}$, and $G_{C\mathbf{t}}$ have identical Betti numbers, and alternating sums with the Intersection Theorem (Theorem~\ref{thm:intersection}) give the displayed equalities.

For a homothety $f(x) = \lambda x$ ($C = \lambda$) the containment~\eqref{eq:functor_morphism} is an equality---$\lambda B(x,r) = B(\lambda x, \lambda r)$, so $f$ carries $\bigcap_i \mathcal{U}(X_i;t_i)$ \emph{onto} $\bigcap_i \mathcal{U}(\lambda X_i; \lambda t_i)$---and the conclusion $\Delta\chi(\lambda\mathbf{t}; \lambda\mathcal{X}) = \Delta\chi(\mathbf{t};\mathcal{X})$ holds with no window hypothesis.
\end{proof}

\begin{definition}[$L^1$ distance on profiles]\label{def:l1_distance}
For integer-valued profiles $F, G : \R_{\geq 0}^k \to \Z$ that agree outside a bounded set, the \textbf{$L^1$ distance} is
\[
    d_{L^1}(F, G) = \int_{\R_{\geq 0}^k} |F(\mathbf{t}) - G(\mathbf{t})| \, d\mathbf{t}.
\]
\end{definition}

For the \emph{diagonal} profiles $r \mapsto \Delta\chi(r)$ and $r \mapsto \Delta\chi'(r)$, both equal $\chi(\R^d\text{-ball}) = 1$ once $r$ is large ({Lemma~\ref{cor:dead_zone}(3)}, all $t_i = r \to \infty$ together), so its difference has bounded support and the diagonal $d_{L^1}$ is finite. Along a mixed ray---one coordinate held small while another grows---the profile need not tend to $1$, so the unwindowed integral over $\R_{\geq 0}^k$ is generally infinite; accordingly the $L^1$ theory is stated on the diagonal (Theorem~\ref{thm:algebraic_stability}) and, off it, on bounded windows (Proposition~\ref{prop:multiparameter_stability}).
We do \emph{not} define an interleaving distance directly on the profile $\Delta\chi$: the interleaving distance $d_I$ is a metric on persistence \emph{modules}---it compares $\mathrm{PH}_*(\mathcal{X})$ with $\mathrm{PH}_*(\mathcal{X}')$, not the integer profiles (see Remark~\ref{rem:interleaving_comparison}).

Recall that two barcodes are within \emph{bottleneck distance} $\epsilon$ if there is a bijective correspondence between their bars of persistence $> 2\epsilon$ moving every endpoint by at most $\epsilon$, the bars of persistence $\leq 2\epsilon$ left unconstrained.

\begin{theorem}[Stability of the Intersection ECP]\label{thm:algebraic_stability}
Let $\mathcal{X}, \mathcal{X}' \in \mathbf{PC}_k(\R^d)$ with $\epsilon = \max_i d_H(X_i, X'_i)$, and write $\Delta\chi = \Delta\chi(\cdot;\mathcal{X})$, $\Delta\chi' = \Delta\chi(\cdot;\mathcal{X}')$ for the diagonal profiles.
\begin{enumerate}
    \item \textbf{Interleaved filtrations.} For all $r \geq \epsilon$,
    \[
        \textstyle\bigcap_i \mathcal{U}(X'_i; r-\epsilon) \;\subseteq\; \bigcap_i \mathcal{U}(X_i; r) \;\subseteq\; \bigcap_i \mathcal{U}(X'_i; r+\epsilon),
    \]
    and symmetrically with $\mathcal{X}, \mathcal{X}'$ exchanged.
Hence the two intersection-filtration modules are $\epsilon$-interleaved, $d_I(\mathrm{PH}_*(\mathcal{X}), \mathrm{PH}_*(\mathcal{X}')) \leq \epsilon$, and their persistence diagrams are within bottleneck distance $\epsilon$.
    \item \textbf{Agreement off critical scales.} $\Delta\chi(r) = \Delta\chi'(r)$ at every $r$ whose neighborhood $[r-\epsilon, r+\epsilon]$ contains no critical scale {(Definition~\ref{def:regular_window})} of the intersection filtration of $\mathcal{X}$ or of $\mathcal{X}'$.

    \item \textbf{$L^1$ stability.} $d_{L^1}(\Delta\chi, \Delta\chi') \leq 2\sum_q W_1(\mathrm{Dgm}_q, \mathrm{Dgm}'_q) \leq 2\epsilon\, (N + N')$, where $W_1$ is the $1$-Wasserstein distance (with the $\ell^\infty$ ground metric on the half-plane, {so that the factor $2$ is exactly the $\ell^1$-to-$\ell^\infty$ conversion}) between the degree-$q$ persistence diagrams of the two intersection filtrations and $N = \sum_q |\mathrm{Dgm}_q|$, $N' = \sum_q |\mathrm{Dgm}'_q|$ are the total numbers of persistence bars (birth--death pairs, summed over all homological degrees $q$) of the two intersection filtrations.
  The Wasserstein form is controlled by the \emph{actual} bar displacements; the bound $2\epsilon (N + N')$ is the worst case, charging every matched pair the maximal shift $\epsilon$ and every unmatched bar---necessarily of persistence $\leq 2\epsilon$---its distance to the diagonal.
\end{enumerate}
\end{theorem}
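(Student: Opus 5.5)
The plan is to prove the three parts in order, each feeding the next.

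\emph{Part (1).} The argument is pointwise. Since $d_H(X_i,X'_i)\le\epsilon$, every $p'\in X'_i$ has some $p\in X_i$ with $\|p-p'\|\le\epsilon$, so the triangle inequality gives $B(p',r-\epsilon)\subseteq B(p,r)$. Taking unions yields $\mathcal{U}(X'_i;r-\epsilon)\subseteq\mathcal{U}(X_i;r)$. Symmetrically, $\mathcal{U}(X_i;r)\subseteq\mathcal{U}(X'_i;r+\epsilon)$. Intersecting over $i$ preserves both inclusions, and exchanging the roles of the two tuples gives the symmetric chain. These inclusions commute with the structure inclusions of both filtrations, because all maps are inclusions of subsets of $\R^d$. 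Hence they define an $\epsilon$-interleaving of the diagonal modules $\mathrm{PH}_*$. The isometry theorem \citep{chazal2016structure} then converts this into bottleneck distance $\le\epsilon$, once we have q-tameness: each intersection is a finite intersection of finite ball unions, hence compact semi-algebraic with finitely many critical values, so its homology is finite-dimensional.

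\emph{Part (2).} Fix $r$ as in the hypothesis. The composite $\bigcap_i\mathcal{U}(X_i;r-\epsilon)\subseteq\bigcap_i\mathcal{U}(X'_i;r)\subseteq\bigcap_i\mathcal{U}(X_i;r+\epsilon)$ is the structure map of $\mathcal{X}$ over a window containing no critical scale, so it is an isomorphism on $H_*(-;\mathbb{F})$. The analogous composite for $\mathcal{X}'$, namely $\bigcap_i\mathcal{U}(X'_i;r-\epsilon)\to\bigcap_i\mathcal{U}(X'_i;r+\epsilon)$ through $\bigcap_i\mathcal{U}(X_i;r)$, is also an isomorphism. In the resulting zigzag, each inner map has both a left and a right inverse on homology, exactly as in the proof of Lemma~\ref{lem:scale_equivariance}. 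It follows that $\bigcap_i\mathcal{U}(X_i;r)$ and $\bigcap_i\mathcal{U}(X'_i;r)$ have equal Betti numbers in every degree, and the Intersection Theorem (Theorem~\ref{thm:intersection}) gives $\Delta\chi(r)=\Delta\chi'(r)$. One boundary case needs care: if $r<\epsilon$, the lower scale is replaced by $\emptyset$ or $0$. Disjointness gives $\Delta\chi=0$ near $0$, and at small scales the absence of critical values handles the rest.

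\emph{Part (3).} We write each profile as a signed sum of bar indicators: $\Delta\chi(r)=\sum_q(-1)^q\sum_{[b,d)\in\mathrm{Dgm}_q}\mathbf{1}_{[b,d)}(r)$. This holds pointwise because $\beta_q(r)$ counts the bars alive at $r$. The single infinite bar of $H_0$, born at the first meeting scale, appears on both sides, and its contribution is controlled by its birth shift; all other bars are finite, since the intersection is eventually a ball. Now fix, for each $q$, an optimal $W_1$ matching between $\mathrm{Dgm}_q$ and $\mathrm{Dgm}'_q$, pairing unmatched points with the diagonal. Then
\[
d_{L^1}(\Delta\chi,\Delta\chi')\le\sum_q\sum_{\text{pairs}}\big\|\mathbf{1}_{[b,d)}-\mathbf{1}_{[b',d')}\big\|_{L^1}\le\sum_q\sum_{\text{pairs}}\big(|b-b'|+|d-d'|\big)\le 2\sum_q\sum_{\text{pairs}}\max\big(|b-b'|,|d-d'|\big).
\]
For a point matched to the diagonal, $\|\mathbf{1}_{[b,d)}\|_{L^1}=d-b=2\cdot\tfrac{d-b}{2}$, which is twice its $\ell^\infty$ distance to the diagonal. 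This gives the first inequality. For the second, we use the bottleneck matching from (1) as a competitor. Each pair costs at most $\epsilon$, and each unmatched bar has persistence $\le 2\epsilon$, so costs at most $\epsilon$ to the diagonal. There are at most $N+N'$ pairs in total, which gives $W_1$-sums $\le\epsilon(N+N')$ and hence the final bound.

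\emph{Main obstacle.} I expect the hardest step to be the bookkeeping in (3), namely verifying the barcode expansion of $\Delta\chi$ including the essential $H_0$ bar. I also need to check that the bars, and hence $N$, are finite, which uses the semi-algebraic finiteness of critical values. Parts (1) and (2) are routine interleaving arguments.
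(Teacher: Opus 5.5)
Your parts (1) and (3) follow the paper's proof. Part (1) uses the same offset inclusions and isometry theorem. Part (3) uses the same chain: pointwise $|\Delta\chi-\Delta\chi'|\le\sum_q|\beta_q-\beta'_q|$, then the $L^1$ norm of the Betti-curve difference is at most $2W_1$, then the bottleneck matching of (1) serves as a $W_1$ competitor. The one difference is that the paper cites \citet[Prop.~3.1]{dlotko2023euler} for $\int_0^\infty|\beta_q-\beta'_q|\,dr\le 2W_1(\mathrm{Dgm}_q,\mathrm{Dgm}'_q)$. You instead prove it directly, expanding Betti curves into bar indicators and using $\|\mathbf{1}_{[b,d)}-\mathbf{1}_{[b',d')}\|_{L^1}\le|b-b'|+|d-d'|$. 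Your explicit treatment of the essential $H_0$ bar is a detail the paper leaves implicit.

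Part (2) is where you take a genuinely different route. Write $F_\rho=\bigcap_i\mathcal{U}(X_i;\rho)$ and $G_\rho=\bigcap_i\mathcal{U}(X'_i;\rho)$.
\begin{itemize}
\item The paper argues on barcodes. With no endpoint in the closed window, every bar of either diagram contains $[r-\epsilon,r+\epsilon]$ or misses it. So a bar over $r$ has persistence $>2\epsilon$, must be matched by the bottleneck matching of (1), and its partner also covers $r$. The bars alive at $r$ therefore correspond bijectively.
\item You use only the interleaving inclusions and the no-critical-value isomorphisms $H_*(F_{r-\epsilon})\cong H_*(F_{r+\epsilon})$ and $H_*(G_{r-\epsilon})\cong H_*(G_{r+\epsilon})$. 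This is the argument the paper itself uses for Proposition~\ref{prop:multiparameter_stability}(2).
\end{itemize}
Your version needs neither the isometry theorem nor a matching, so it carries over verbatim to the $k$-parameter surface, where no barcode exists. The paper's version gives slightly more, an explicit bijection of the bars at $r$, and keeps (2) in the barcode language that (3) then uses.

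One correction of wording. The five-term tower of Lemma~\ref{lem:scale_equivariance} is not available here. Its analogue $F_{r-2\epsilon}\subseteq G_{r-\epsilon}\subseteq F_r\subseteq G_{r+\epsilon}\subseteq F_{r+2\epsilon}$ would need the doubled window $[r-2\epsilon,r+2\epsilon]$. Your two three-term sandwiches give only one-sided inverses, so the conclusion should come from a dimension count. Injectivity of $H_q(F_{r-\epsilon})\to H_q(G_r)$ and of $H_q(G_{r-\epsilon})\to H_q(F_r)$ gives
\[
\beta_q(G_r)\ge\beta_q(F_{r-\epsilon})=\beta_q(F_r)\ge\beta_q(G_{r-\epsilon})=\beta_q(G_r).
\]
With the convention $\mathcal{U}(\cdot\,;s)=\emptyset$ for $s<0$, the same count also covers $r<\epsilon$, so no separate boundary case is needed.
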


\begin{proof}
Since $d_H(X_i, X'_i) \leq \epsilon$, we have $\mathcal{U}(X'_i; r-\epsilon) \subseteq \mathcal{U}(X_i; r) \subseteq \mathcal{U}(X'_i; r+\epsilon)$ for $r \geq \epsilon$; intersecting over $i$ gives the displayed inclusions, and exchanging $\mathcal{X}, \mathcal{X}'$ gives the symmetric ones.
An $\epsilon$-interleaving of filtrations induces an $\epsilon$-interleaving of the homology persistence modules in each degree {(over the fixed field $\mathbb{F}$)}. The interval decomposition of pointwise finite-dimensional modules \citep{chazal2016structure} is what makes the diagrams $\mathrm{Dgm}_q$ exist and makes $\beta_q(r)$ the number of bars over $r${, on which the identities (2) and (3) run}.

The interleaving gives the bottleneck bound by the isometry theorem \citep{chazal2016structure}; this is~(1).

For (2), the hypothesis excludes critical scales of \emph{both} filtrations from the closed window $[r-\epsilon, r+\epsilon]$, so every bar endpoint of either diagram avoids the window, and every bar of either filtration contains the whole window {$[r-\epsilon, r+\epsilon]$} or is disjoint from it.
Fix the bottleneck matching from (1).
If a matched bar $I$ contains $r$, then $I \supseteq [r-\epsilon, r+\epsilon]$, so its partner $I'$---whose endpoints lie within $\epsilon$ of those of $I$---meets the window; since the endpoints of $I'$ \emph{also} avoid the window, $I' \supseteq [r-\epsilon, r+\epsilon] \ni r$.
{ An unmatched bar has persistence $\leq 2\epsilon$; but a bar covering $r$ whose endpoints avoid the closed window $[r-\epsilon, r+\epsilon]$ has birth $< r-\epsilon$ and death $> r+\epsilon$, hence persistence $> 2\epsilon$---so no unmatched bar covers $r$.}
So the position of $r$ inside a bar never enters: the bars over $r$ of the two diagrams correspond exactly under the matching, and since $\beta_q(r)$ \emph{is} the number of bars over $r$ (the interval decomposition over $\mathbb{F}$ again), $\beta_q(r;\mathcal{X}) = \beta_q(r;\mathcal{X}')$ for every $q$, and hence $\Delta\chi(r) = \Delta\chi'(r)$.

For (3), write $\beta_q(r) = \beta_q(r;\mathcal{X})$ and $\beta'_q(r) = \beta_q(r;\mathcal{X}')$ for the degree-$q$ Betti curves (dimensions over $\mathbb{F}$) of the two intersection filtrations.
By (2) the two profiles differ only on the union of the $\epsilon$-neighborhoods of the critical scales.
The pointwise identity $\Delta\chi(r) - \Delta\chi'(r) = \sum_q (-1)^q\big(\beta_q(r) - \beta'_q(r)\big)$ gives, at each $r$, $|\Delta\chi(r) - \Delta\chi'(r)| \leq \sum_q |\beta_q(r) - \beta'_q(r)|$.
Integrating over the scale $r$ (an integral $\int_0^\infty \! dr$, the $L^1$ distance of the two Betti curves in degree $q$) and bounding it by the $1$-Wasserstein distance of the degree-$q$ diagrams \citep[Prop.~3.1]{dlotko2023euler},
\[
    \int_0^\infty |\beta_q(r) - \beta'_q(r)|\, dr \;\leq\; 2\,W_1(\mathrm{Dgm}_q, \mathrm{Dgm}'_q) \;\leq\; 2\epsilon\, (N_q + N'_q),
\]
where $N_q = |\mathrm{Dgm}_q|$, $N'_q = |\mathrm{Dgm}'_q|$, and the last step evaluates the bottleneck matching from (1): each matched pair costs its endpoint shifts $\leq \epsilon$, and each unmatched bar---of persistence $\leq 2\epsilon$, hence at $\ell^\infty$-distance $\leq \epsilon$ from the diagonal---costs at most $\epsilon$; the number of matched pairs and unmatched bars together is at most $N_q + N'_q$.

Summing over $q$,
\[
    d_{L^1}(\Delta\chi, \Delta\chi') \;=\; \int_0^\infty |\Delta\chi(r) - \Delta\chi'(r)|\, dr \;\leq\; \sum_q \int_0^\infty |\beta_q - \beta'_q|\, dr \;\leq\; 2\sum_q W_1(\mathrm{Dgm}_q, \mathrm{Dgm}'_q) \;\leq\; 2\epsilon\, (N + N'). \qedhere
\]

\end{proof}

\begin{remark}[Why not a persistence-style interleaving]\label{rem:interleaving_comparison}
The stability theorem for persistence modules \citep{chazal2016structure} bounds the bottleneck distance of diagrams by the Hausdorff distance of point clouds, and it is tempting to seek the analogue with $\Delta\chi$ in place of the diagram and an interleaving distance $d_I(\Delta\chi,\Delta\chi') = \inf\{\epsilon : \Delta\chi(r-\epsilon)\le\Delta\chi'(r)\le\Delta\chi(r+\epsilon)\ \forall r\}$ in place of bottleneck distance.
This fails, and the non-monotonicity of $\chi$ (Theorem~\ref{thm:algebraic_stability}) is exactly the obstruction: if $\Delta\chi = \mathbf{1}_{[a,b)}$ records a transient feature and $\Delta\chi'=\mathbf{1}_{[a+\delta,b+\delta)}$ is the same feature shifted by $\delta = d_H$, then at $r = b$ the upper bound forces $1 = \Delta\chi'(b) \le \Delta\chi(b+\epsilon) = 0$ for every $\epsilon \geq 0$; thus $d_I = \infty$ while $d_H = \delta \to 0$.
The genuine, stable counterparts are the bottleneck stability of the underlying intersection filtration (Theorem~\ref{thm:algebraic_stability}(1)), pointwise agreement away from critical scales (2), and the $L^1$ bound (3)---the bottleneck statement being the faithful image of the persistence stability theorem, now applied to the intersection filtration $\bigcap_i \mathcal{U}(X_i;t_i)$ rather than to a single cloud.

For a \emph{single} cloud ($k = 1$), $L^1$/integrated stability of the Euler characteristic curve is established---see \citet[Prop.~7]{hacquard2024euler} and \citet{dlotko2023euler}, with the related directional setting in \citet{curry2022directions}.
For the interaction profile, part~(1) of Theorem~\ref{thm:algebraic_stability} is that same standard framework applied to the intersection filtration $\bigcap_i \mathcal{U}(X_i; t_i)$---the content is the identification of the right filtered object, and that the perturbation must respect the coloring across clouds---not a new stability mechanism; parts~(2) and~(3), the pointwise agreement off critical scales and the $L^1$ bound on the profile itself, are the statements specific to $\Delta\chi$.
\end{remark}

\begin{remark}[Gromov--Hausdorff strengthening]\label{rem:gh_stability}
{ Since the Intersection ECP depends only on the ambient-isometry class of $\mathcal{X}$ (Lemma~\ref{lem:rigid_invariance}), it is natural to measure perturbations in the Gromov--Hausdorff rather than the Hausdorff distance. The bottleneck stability of part~(1)---and the relative-module stability of Theorem~\ref{thm:relative_stability}---hold in the \emph{colored} Gromov--Hausdorff distance of the joint configuration $(\bigsqcup_i X_i,\ \text{coloring})$: a color-respecting $\epsilon$-correspondence induces interleaving maps on the colored \v{C}ech complexes, hence an interleaving of the intersection filtrations, with the usual constants (bottleneck bound $2\epsilon$); the reader will see this at once from the proof of part~(1), read with the point-moving maps as an abstract correspondence. The \emph{colored, joint} distance is essential---per-cloud Gromov--Hausdorff closeness ignores the cross-cloud geometry that carries the interaction. The ambient reading $\Delta\chi = \chi(\bigcap_i \mathcal{U}(X_i; t_i))$ and the $L^1$ bound~(3) remain Euclidean, using the nerve identification in $\R^d$.}
\end{remark}

Special cases: $k = 2$ gives the primary biparameter profile $\Delta\chi(r, s;\, X, Y)$ and its diagonal $\Delta\chi(r) = \Delta\chi(r, r;\, X, Y)$; $k = 1$ recovers the classical ECC $\Delta\chi(t;\, X) = \chi(\mathcal{U}(X; t))$ \citep{roy2024ecs}; and $k \geq 3$ is the $k$-fold interaction.

\subsection{The Intersection Theorem as a Commuting Square}\label{sec:intersection}

The Intersection Theorem (Theorem~\ref{thm:intersection}), stated with the definition in Section~\ref{sec:mixup_def}, asserts that geometric intersection ($\cap$) and algebraic multiplication ($\prod$) are interchangeable under the Euler integral---a consequence of the factorization~\eqref{eq:factorization}, {which we now display as a commuting square}.

Because~\eqref{eq:intersection} holds unconditionally for compact semi-algebraic sets, one could instead \emph{define} the Intersection ECP by its right-hand side $\chi(\bigcap_i \mathcal{U}(X_i; t_i))$ and demote Definition~\ref{def:mixup_ec} to a corollary.
We use that geometric form freely for computation---it is what the stability proofs estimate---but keep the Euler integral of the product as the \emph{primitive}: it is the form that factors through~\eqref{eq:factorization}, is pinned down by the Universality Theorem below,  and makes the commuting square~\eqref{eq:naturality_diagram} a theorem rather than a definitional triviality.
That square is worth displaying explicitly.

\begin{remark}[The Intersection Theorem as a commuting diagram]\label{rem:naturality}
At each fixed $\mathbf{t}$, the theorem asserts commutativity of:
\begin{equation}\label{eq:naturality_diagram}
\begin{array}{ccc}
    \mathbf{PC}_k(\R^d) & \xrightarrow{\;(\mathbf{1}_{\mathcal{U}(X_i;\, t_i)})_i\;} & \mathbf{CF}(\R^d)^k \\[6pt]
    \Big\downarrow\vcenter{\rlap{\scriptsize$\;\bigcap_i \mathcal{U}(X_i; t_i)$}} & & \Big\downarrow\vcenter{\rlap{\scriptsize$\;\int (\prod)\, d\chi$}} \\[6pt]
    \mathbf{SA}_c & \xrightarrow{\;\;\chi\;\;} & \Z
\end{array}
\end{equation}
where $\mathbf{SA}_c$ is the compact semi-algebraic sets on which $\chi$ is defined, the top arrow is the indicator-valued offset map of~\eqref{eq:factorization} at the fixed $\mathbf{t}$, and the right arrow is the two-step composite $\mathbf{CF}^k \xrightarrow{\prod} \mathbf{CF} \xrightarrow{\int d\chi} \Z$.
The left path forms ball unions, intersects geometrically, and takes the Euler characteristic; the right path forms indicator functions, multiplies algebraically, and integrates.
Both yield $\Delta\chi(\mathbf{t})$.  This commuting square is the precise content of the Intersection Theorem; by Theorem~\ref{thm:universality} the Intersection ECP is moreover the unique pointwise-Euler interaction profile obeying Separation and Normalization on the $k$-fold overlap.
\end{remark}

The square shows the product \emph{achieves} the geometric intersection; we now turn the observation into a converse and ask whether it is the \emph{only} construction that does.
It is, and the canonicity comes in two stages that assume progressively less: the first fixes how the clouds are combined while granting the Euler integral, and the second forces the Euler characteristic itself.

\begin{theorem}[Universality of the Intersection ECP]\label{thm:universality}
Let $\Phi : \mathbf{PC}_k(\R^d) \to \mathbf{ZFun}_k$ be an assignment satisfying:
\begin{enumerate}
    \item \textbf{Pointwise Euler form}: there is a \emph{pointwise} {function} $\varphi : \Z^k \to \Z$ with $\varphi(\mathbf{0}) = 0$ such that
    \[
        \Phi(\mathcal{X})(\mathbf{t}) = \int_{\R^d} \varphi\!\left(\mathbf{1}_{\mathcal{U}(X_1;t_1)}, \ldots, \mathbf{1}_{\mathcal{U}(X_k;t_k)}\right) d\chi,
    \]
    applied pointwise: $\varphi(f_1, \ldots, f_k)(x) = \varphi\big(f_1(x), \ldots, f_k(x)\big)$;
    \item \textbf{Separation}: $\Phi(\mathcal{X})(\mathbf{t}) = 0$ whenever the $k$-fold overlap $\bigcap_i \mathcal{U}(X_i; t_i) = \emptyset$---no interaction registers without a common overlap;
    \item \textbf{Normalization}: $\Phi(\mathcal{X})(\mathbf{t}) = 1$ when $\bigcap_i \mathcal{U}(X_i; t_i)$ is a {single nonempty ($k$-fold) lens}.
\end{enumerate}
Then $\Phi = \mathcal{M}$; that is, $\mathcal{M} = \Delta\chi = \chi(C_k)$ is the \emph{unique} pointwise-Euler interaction profile vanishing off the $k$-fold overlap and normalized on a single nonempty lens.
\end{theorem}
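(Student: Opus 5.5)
The plan is to reduce $\varphi$ to finitely many integer coefficients and then pin these down with explicit test configurations. Since every argument of $\varphi$ is an indicator, only the restriction of $\varphi$ to $\{0,1\}^k$ matters. Any function on the Boolean cube has a unique Möbius (multilinear) expansion
\[
\varphi(\varepsilon_1,\ldots,\varepsilon_k)=\sum_{S\subseteq\{1,\ldots,k\}} c_S\prod_{i\in S}\varepsilon_i,\qquad c_S\in\Z,
\]
and $c_\emptyset=\varphi(\mathbf 0)=0$. Substituting $\varepsilon_i=\mathbf 1_{\mathcal U(X_i;t_i)}$ and using $\Z$-linearity of $\int d\chi$ together with the Euler product formula (Proposition~\ref{prop:euler_product}) gives
\[
\Phi(\mathcal X)(\mathbf t)=\sum_{\emptyset\neq S}c_S\,\chi\Big(\bigcap_{i\in S}\mathcal U(X_i;t_i)\Big).
\]
It therefore suffices to show $c_S=0$ for every nonempty proper $S$ and $c_{\{1,\ldots,k\}}=1$. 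Then $\Phi(\mathcal X)(\mathbf t)=\chi(\bigcap_i\mathcal U(X_i;t_i))=\mathcal M(\mathcal X)(\mathbf t)$ by the Intersection Theorem (Theorem~\ref{thm:intersection}).

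\textbf{Singletons.} Take all clouds to be single points, pairwise at distance greater than $2t$. Every intersection over $|S|\ge2$ is empty, so Separation (for $k\ge2$) gives $\sum_j c_{\{j\}}=0$. Now replace cloud $j$ by two points, still far apart from each other and from all other points. Then $\chi(\mathcal U(X_j;t))=2$ and Separation gives $\sum_i c_{\{i\}}+c_{\{j\}}=0$. Subtracting the two relations yields $c_{\{j\}}=0$ for each $j$.

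\textbf{Proper subsets, by induction on $|S|$.} Let $S$ be proper with $|S|\ge2$. Place one point for each $i\in S$, all distinct and within a tiny distance of each other. Place the remaining clouds as single points far from this cluster and from one another. Choose a common scale $t$ so that the balls indexed by $S$ share a common point while all other pairs of balls are disjoint. For $T\subseteq S$, the set $\bigcap_{i\in T}\mathcal U(X_i;t)$ is a nonempty intersection of balls, hence convex with $\chi=1$. Every $T\not\subseteq S$ with $|T|\ge2$ gives an empty set. Since $S$ is proper, the $k$-fold overlap is empty, so Separation gives $\sum_{\emptyset\ne T\subseteq S}c_T+\sum_{j\notin S}c_{\{j\}}=0$. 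By the inductive hypothesis and the singleton step, every term except $c_S$ vanishes, so $c_S=0$.

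\textbf{Top coefficient.} Apply the same construction with $S=\{1,\ldots,k\}$. The $k$-fold overlap is then a single nonempty lens, and Normalization gives $\sum_{T\ne\emptyset}c_T=1$. All lower terms are already zero, so $c_{\{1,\ldots,k\}}=1$.

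The main obstacle is not algebraic but one of realizability within $\mathbf{PC}_k(\R^d)$. The test configurations must consist of pairwise disjoint clouds (the distinct nearby points above ensure this). They must realize exactly the prescribed overlap pattern, and every nonempty sub-intersection must be a genuine single lens with $\chi=1$. This last point follows from convexity of intersections of balls. One also needs that Separation and Normalization, as hypotheses, apply to these specific $\mathbf t$, which is where the statement's wording ``single nonempty lens'' must be read as convex. The degenerate case $k=1$ is handled by Normalization alone: a single point gives $c_{\{1\}}=1$.
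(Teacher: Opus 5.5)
Your proof is correct and follows the paper's strategy. You reduce $\varphi$ to its values on $\{0,1\}^k$, kill every proper-support coefficient by Separation through an induction on support size (varying $\chi(\mathcal{U}(X_j;t))$ for the singleton base case, exactly as the paper does), and fix the top coefficient by Normalization on a single $k$-fold lens. The only difference is the basis: you expand in the M\"obius basis $\prod_{i\in S}\varepsilon_i$, so every term is already $\chi$ of a compact intersection and a single lens suffices at each step, whereas the paper expands over the exact-support level sets $R_{\mathbf{s}}$ and must rewrite their non-compact $\chi(R_{\mathbf{s}})$ as differences of compact evaluations.
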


Here $\chi$ (hence inclusion--exclusion) is granted through the Euler-integral form~(1), and only the \emph{combination} of the clouds is forced; Theorem~\ref{thm:universality_strong} removes even that.

\begin{proof}
Write $A_i = \mathcal{U}(X_i; t_i)$.
On indicators, $\varphi(\mathbf{1}_{A_1}, \ldots, \mathbf{1}_{A_k})$ takes the constant value $\varphi(\mathbf{s})$ on the level set $R_{\mathbf{s}} = \{x : \mathbf{1}_{A_i}(x) = s_i \text{ for all } i\}$---the points lying in exactly the clouds indexed by $S := \mathrm{supp}(\mathbf{s}) = \{i : s_i = 1\}$---so by $\Z$-linearity of the Euler integral and $\varphi(\mathbf{0}) = 0$,
\begin{equation}\label{eq:univ_levelset}
    \Phi(\mathcal{X})(\mathbf{t}) = \int \varphi(\mathbf{1}_{A_1}, \ldots, \mathbf{1}_{A_k})\, d\chi = \sum_{\mathbf{0} \neq \mathbf{s} \in \{0,1\}^k} \varphi(\mathbf{s})\, \chi(R_{\mathbf{s}}),
\end{equation}
Here the level sets $R_{\mathbf{s}}$ are locally closed but not compact, so the notation abbreviates $\chi(R_{\mathbf{s}}) := \int \mathbf{1}_{R_{\mathbf{s}}}\, d\chi$; writing $\mathbf{1}_{R_{\mathbf{s}}} = \mathbf{1}_{\bigcap_{i \in S} A_i} - \mathbf{1}_{(\bigcap_{i \in S} A_i) \cap (\bigcup_{j \notin S} A_j)}$ gives
\[
    \chi(R_{\mathbf{s}}) \;=\; \chi\Big(\bigcap_{i \in S} A_i\Big) \;-\; \chi\Big(\bigcap_{i \in S} A_i \,\cap \bigcup_{j \notin S} A_j\Big),
\]
a difference of two evaluations on compact semi-algebraic sets, so every term of~\eqref{eq:univ_levelset} stays inside the class on which $\chi$ is defined.
The full-support term is $R_{(1,\ldots,1)} = \bigcap_i A_i$.
(For $k = 2$, \eqref{eq:univ_levelset} is the three-term sum $\varphi(1,1)\,\chi(A_1 \cap A_2) + \varphi(1,0)\,[\chi(A_1) - \chi(A_1 \cap A_2)] + \varphi(0,1)\,[\chi(A_2) - \chi(A_1 \cap A_2)]$, using $\chi(A_i \setminus A_j) = \chi(A_i) - \chi(A_i \cap A_j)$ to keep every term the Euler characteristic of a compact set.)

\emph{Separation forces every proper-support coefficient to vanish}: $\varphi(\mathbf{s}) = 0$ for all $\mathbf{s}$ with $\emptyset \neq S \subsetneq [k]$, by induction on $m = |S|$.
{ Assume the claim holds for every $\mathbf{s}'$ with $0 < |\mathrm{supp}(\mathbf{s}')| < m$.}
Fix such an $\mathbf{s}$ and let the clouds $\{X_i\}_{i \in S}$ meet in $n$ disjoint common lenses while every other cloud $\{X_j\}_{j \notin S}$ lies far away, disjoint from them.
Then $\bigcap_i A_i = \emptyset$, so Separation makes~\eqref{eq:univ_levelset} vanish; every nonempty level set other than $R_{\mathbf{s}}$ has support of size $< m$ (the far clouds and the partial overlaps within $S$), so by the inductive hypothesis those terms drop and $\varphi(\mathbf{s})\,\chi(R_{\mathbf{s}}) = 0$.
Since $\chi(R_{\mathbf{s}}) = n$ is an arbitrary positive integer, $\varphi(\mathbf{s}) = 0$.
(The base case $m = 1$ is the same computation with all clouds mutually separated, where~\eqref{eq:univ_levelset} reduces to $\sum_i \varphi(e_i)\,\chi(A_i) = 0$ with the $\chi(A_i)$ independently arbitrary.)

\emph{Normalization fixes the top}: taking $A_1, \ldots, A_k$ to meet in a single $k$-fold lens---so $\chi(\bigcap_i A_i) = 1$ and all proper-support terms of~\eqref{eq:univ_levelset} vanish---Normalization gives $\varphi(1, \ldots, 1) = 1$.
Hence $\varphi(\mathbf{s}) = \prod_i s_i$ on $\{0,1\}^k$, so $\varphi(\mathbf{1}_{A_1}, \ldots, \mathbf{1}_{A_k}) = \prod_i \mathbf{1}_{A_i} = \mathbf{1}_{\bigcap_i A_i}$ and $\Phi(\mathcal{X})(\mathbf{t}) = \int \mathbf{1}_{\bigcap_i A_i}\, d\chi = \chi\big(\bigcap_i A_i\big) = \mathcal{M}(\mathcal{X})(\mathbf{t})$ at every $\mathbf{t}$.
\end{proof}

The \emph{$k$-fold} normalization is precisely what selects the top floor of the interaction spectrum; a \emph{pairwise} normalization selects the bottom floor $\chi(C_2)$ instead (Definition~\ref{def:floors}, and Proposition~\ref{prop:pairwise}).

Theorem~\ref{thm:universality} took the Euler integral---hence $\chi$---as a hypothesis (condition (1)).
In fact it too is forced: dropping the pointwise-Euler form in favor of neutral additivity and topological axioms still pins down $\Delta\chi$.

\begin{theorem}[The Euler characteristic is forced]\label{thm:universality_strong}
Write $I(A_1, \ldots, A_k) = \Phi(\mathcal{X})(\mathbf{t})$ for $A_i = \mathcal{U}(X_i; t_i)$ and an assignment $\Phi : \mathbf{PC}_k(\R^d) \to \mathbf{ZFun}_k$, with $I$ defined for arguments in the lattice $\mathcal{B}$ generated by finitely many closed balls---finite unions of finite intersections of balls{, closed under the finite unions and intersections over which the proof's inclusion--exclusion expansion runs}.

Suppose:
\begin{enumerate}
    \item \textbf{Multivaluation}: $I$ is a valuation in each argument separately---fixing all arguments but one (written $A, A'$),
    \[
        I(A \cup A') + I(A \cap A') = I(A) + I(A')
    \]
    for $A, A' \in \mathcal{B}$ {(the interaction responds \emph{linearly}, by inclusion--exclusion, to combining each cloud under $\cup$ and $\cap$)};
    \item \textbf{Topological invariance}: $I(A_1, \ldots, A_k)$ depends only on the homotopy type of $\bigcap_i A_i$ (we ask the interaction to capture the \emph{topology} of the overlap, not its metric size);
    \item \textbf{Separation}: $I(A_1, \ldots, A_k) = 0$ when $\bigcap_i A_i = \emptyset$;
    \item \textbf{Normalization}: $I(A_1, \ldots, A_k) = 1$ when $\bigcap_i A_i$ is nonempty and contractible.
\end{enumerate}
Then $I(A_1, \ldots, A_k) = \chi\big(\bigcap_i A_i\big)$, so $\Phi = \mathcal{M}$.
Thus neither the Euler characteristic nor the product is assumed: $\chi$ emerges as the unique homotopy-invariant valuation normalized on contractible sets, and the multivaluation structure propagates it to the interaction $\chi(\bigcap_i A_i)$.
\end{theorem}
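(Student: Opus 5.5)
The plan is to reduce $I$, via multivaluation, to its values on tuples of single balls, and then to pin those values down with the topological axioms. The key observation is that a finite intersection of closed balls is convex. Hence any tuple $(B_1,\ldots,B_k)$ of basic convex pieces, each a finite intersection of balls, has an overlap $\bigcap_i B_i$ that is either empty or convex, and therefore contractible.

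\textbf{Step 1 (values on convex tuples).} Write $\mathcal{K}$ for the class of finite intersections of balls. For $B_1,\ldots,B_k\in\mathcal{K}$, Separation together with Normalization and Topological invariance gives
\[
I(B_1,\ldots,B_k)=\mathbf{1}\big[\textstyle\bigcap_i B_i\neq\emptyset\big]=\chi\big(\textstyle\bigcap_i B_i\big).
\]
Topological invariance enters only to say that a nonempty convex overlap is contractible. Normalization as stated already covers this case.

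\textbf{Step 2 (inclusion--exclusion in each slot).} Every $A\in\mathcal{B}$ is a finite union $A=\bigcup_{j=1}^m K_j$ with $K_j\in\mathcal{K}$. Fix all arguments but one. Iterating the multivaluation identity by induction on $m$ gives
\[
I(\ldots,A,\ldots)=\sum_{\emptyset\neq J\subseteq[m]}(-1)^{|J|+1}\,I\big(\ldots,\textstyle\bigcap_{j\in J}K_j,\ldots\big).
\]
The inductive step writes $A=A'\cup K_m$ and uses $A'\cap K_m=\bigcup_{j<m}(K_j\cap K_m)$, which is again a union of $m-1$ members of $\mathcal{K}$. Both $A'$ and $A'\cap K_m$ lie in $\mathcal{B}$, which is exactly why the lattice is required to be closed under the relevant operations. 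Doing this expansion in each of the $k$ slots in turn expresses $I(A_1,\ldots,A_k)$ as a signed sum of values $I(K^{(1)}_{J_1},\ldots,K^{(k)}_{J_k})$, where each argument is a single convex piece. One caveat: multivaluation must be applied to arbitrary arguments of $\mathcal{B}$, not only to genuine ball unions $\mathcal{U}(X_i;t_i)$. I read the hypothesis as asserting that $I$ is defined on all of $\mathcal{B}^k$, as the statement says.

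\textbf{Step 3 (reassemble).} Substitute Step 1 into the expansion. The result is the same signed sum with each term replaced by $\chi\big(\bigcap_i K^{(i)}_{J_i}\big)$. The function $(A_1,\ldots,A_k)\mapsto\chi(\bigcap_i A_i)$ is itself a multivaluation on $\mathcal{B}$: in slot $i$, with $D=\bigcap_{l\neq i}A_l$, we have
\[
(A\cup A')\cap D=(A\cap D)\cup(A'\cap D),\qquad (A\cap A')\cap D=(A\cap D)\cap(A'\cap D),
\]
so this follows from the compact inclusion--exclusion for $\chi$ recorded in Section~\ref{sec:euler_integral}. It therefore satisfies the identical expansion with identical base values, and so $I(A_1,\ldots,A_k)=\chi(\bigcap_i A_i)$. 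By the Intersection Theorem (Theorem~\ref{thm:intersection}) this is $\mathcal{M}(\mathcal{X})(\mathbf{t})$.

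\textbf{Main obstacle.} The step needing the most care is making the two-valuation uniqueness rigorous. A valuation on a lattice generated by a family closed under intersection is determined by its values on that family. This holds because the inclusion--exclusion expansion is forced, so two valuations agreeing on $\mathcal{K}$ agree on all of $\mathcal{B}$. For multivaluations this is applied one slot at a time, with the other slots held at arbitrary elements of $\mathcal{B}$. The nested induction is routine but must be ordered so that the other slots are first reduced to pieces of $\mathcal{K}$.

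Topological invariance is used only in Step 1. Note also that the homotopy-invariance condition in Theorem~\ref{thm:universality_strong}(2) can be verified for $\chi$ a posteriori, so there is no consistency issue.
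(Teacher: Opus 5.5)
Your proposal is correct and follows the paper's appendix proof: expand $I$ slot by slot by the $m$-fold inclusion--exclusion (Lemma~\ref{lem:incl-excl}), evaluate each convex leaf by Separation and Normalization, and observe that $(A_1,\ldots,A_k)\mapsto\chi(\bigcap_i A_i)$ is a multivaluation with the same expansion and the same leaf values. You decompose into finite intersections of balls rather than single balls, and you flag that $I$ must be defined on all of $\mathcal{B}^k$ rather than only on genuine ball unions; these are minor refinements of points the paper leaves implicit.
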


The proof is in Appendix~\ref{app:incl-excl}: expanding $I$ by the multivaluation inclusion--exclusion over finite intersections of balls, each leaf overlap is convex---hence empty or contractible---so Separation, Normalization, and Topological invariance pin it to $\chi$, and $\chi(\bigcap_i A_i)$ obeys the same expansion.

\begin{remark}[What universality does, and does not, Claim]\label{rem:what_universality_claims}
The two theorems single out $\chi(\bigcap_i \mathcal{U}(X_i; t_i))$ as the canonical \emph{Euler-level summary} of the overlap: among pointwise-Euler assignments obeying Separation and Normalization on the $k$-fold overlap (Theorem~\ref{thm:universality})---or, assuming less, multivaluations obeying the neutral valuation and topological-invariance axioms together with the same Separation and Normalization (Theorem~\ref{thm:universality_strong})---it is the unique one.
This is a statement about \emph{which} summary, not a claim that the summary's \emph{value} measures interaction strength.
Indeed $\chi$ is signed and non-monotone (Remark~\ref{rem:interleaving_comparison}), so a larger value of $\Delta\chi(\mathbf{t})$ does not mean a ``stronger'' interaction.
Detecting \emph{whether} clouds interact, and resolving the finer structure $\chi$ compresses, is the role of the relative-homology refinement of Section~\ref{sec:relative}.
\end{remark}

\subsection{The Interaction Spectrum}\label{sec:spectrum}

The universality theorems pin $\Delta\chi$ as the canonical \emph{top-floor} summary of the overlap.
It tops a whole \emph{interaction spectrum}---a graded family of $\chi$-level descriptors, each canonical in the same sense and all stable together---which we now develop; it is the Euler-characteristic sibling of the relative-homology refinement of Section~\ref{sec:relative}.

\begin{definition}[The interaction spectrum: floors of the descending chain]\label{def:floors}
The uniqueness in Theorem~\ref{thm:universality} is relative to its normalization (value $1$ on a single $k$-fold lens); weakening that normalization exposes a whole \emph{family} of pointwise-Euler interaction descriptors, of which $\Delta\chi$ is the top member.
For $\mathbf{t} = (t_1,\ldots,t_k)$ set
\[
    C_j(\mathbf{t}) \;=\; \bigcup_{S\subseteq [k],~ |S| = j}\ \bigcap_{i \in S} \mathcal{U}(X_i; t_i), \qquad j = 1,\ldots,k,
\]
the region where \emph{at least $j$} of the clouds overlap.
These form a descending chain
\[
    \textstyle\bigcup_i \mathcal{U}(X_i; t_i) = C_1 \supseteq C_2 \supseteq \cdots \supseteq C_k = \bigcap_i \mathcal{U}(X_i; t_i),
\]
and each $\chi(C_j)$ is again a pointwise-Euler descriptor: writing $\varepsilon_i = \mathbf{1}_{\mathcal{U}(X_i; t_i)}$, the region $C_j$ has indicator $\mathbf{1}_{C_j} = \varphi_j(\varepsilon_1, \ldots, \varepsilon_k)$ for the explicit pointwise function
\[
    \varphi_j(\varepsilon_1, \ldots, \varepsilon_k) \;=\; \max_{|S| = j}\ \prod_{i \in S} \varepsilon_i,
\]
whose value is $1$ exactly when at least $j$ of the $\varepsilon_i$ equal $1$; hence $\chi(C_j) = \int \varphi_j\big(\mathbf{1}_{\mathcal{U}(X_1;\cdot)}, \ldots, \mathbf{1}_{\mathcal{U}(X_k;\cdot)}\big)\, d\chi$.
The extremes recover the two named floors: $\varphi_k = \prod_i \varepsilon_i$ gives $\Delta\chi$, and $\varphi_2 = \max_{i < i'} \varepsilon_i \varepsilon_{i'}$ gives the pairwise floor $\chi(C_2)$ (Proposition~\ref{prop:pairwise}).

The Intersection ECP is the \emph{top floor} $\Delta\chi = \chi(C_k)$ (``all $k$ clouds meet''), canonical under the $k$-fold normalization; the \emph{bottom floor} $\chi(C_2)$ (``some pair meets'') is a distinct canonical descriptor---for $k=3$ exactly $\chi\big((A\cap B)\cup(B\cap C)\cup(C\cap A)\big)$, characterized by its own universality statement (Proposition~\ref{prop:pairwise}).
Thus $\Delta\chi$ is canonical \emph{as the top floor}, one member of the interaction spectrum $\{\chi(C_j)\}_{j=2}^{k}$, not the sole pointwise-Euler interaction invariant.

The floors are informative jointly.
Under the hypothesis $(\ast)$ that \emph{some} pairwise overlap already forces the total overlap ($C_2(\mathbf{t}) \neq \emptyset \Rightarrow C_k(\mathbf{t}) \neq \emptyset$), the floors switch on together and $\Delta\chi$ loses nothing.

When $(\ast)$ fails they separate, and the \emph{delay} between the onset of $C_2$ and of $C_k$ is itself a scale the profile records.
That gap is reflected in the reduced {homology} of the interaction nerve $\mathcal{N}(\{\mathcal{U}(X_i; t_i)\}_i)$---in the encircling configuration of Figure~\ref{fig:motivating}(c), $\mathcal{N} \simeq S^1$, exactly the hole that keeps $C_k$ empty while $C_2$ is not.
\end{definition}

Every floor is canonical in the same sense as the top (Theorem~\ref{thm:universality}, the case $j = k$): each $\chi(C_j)$ is forced by a universality statement, the separation and normalization now referring to the $j$-th floor.

\begin{proposition}[Universality of the $j$-th floor]\label{prop:pairwise}
Fix $2 \le j \le k$, and for $A_i = \mathcal{U}(X_i; t_i)$ let $\Psi : \mathbf{PC}_k(\R^d) \to \mathbf{ZFun}_k$ satisfy:
\begin{enumerate}
    \item \textbf{Pointwise Euler form}: there is a pointwise $\psi : \Z^k \to \Z$ with $\psi(\mathbf{0}) = 0$ and $\Psi(\mathcal{X})(\mathbf{t}) = I(A_1, \ldots, A_k)$, where $I(A_1, \ldots, A_k) := \int_{\R^d} \psi(\mathbf{1}_{A_1}, \ldots, \mathbf{1}_{A_k})\, d\chi$ is defined for any finite unions of balls $A_i$ (including $\emptyset$);
    \item \textbf{Separation}: $\Psi(\mathcal{X})(\mathbf{t}) = 0$ when the $j$-th floor is empty, $C_j(\mathbf{t}) = \emptyset$ (no $j$ of the clouds share a common point);
    \item \textbf{Normalization}: $\Psi(\mathcal{X})(\mathbf{t}) = 1$ when the $k$ offsets share a common point and $C_j(\mathbf{t})$ is contractible;
    \item \textbf{Reduction}: emptying any one offset yields the $j$-th floor of the rest---for each $l$,
    \[
        I(A_1, \ldots, A_{l-1}, \emptyset, A_{l+1}, \ldots, A_k) = \chi\big(C_j\big((A_i)_{i \neq l}\big)\big),
    \]
    vacuous when $k = j$.
\end{enumerate}
Then $\Psi(\mathcal{X})(\mathbf{t}) = \chi(C_j(\mathbf{t}))$, the Euler characteristic of the $j$-th floor.
\end{proposition}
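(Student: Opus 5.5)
Mirroring the proof of Theorem~\ref{thm:universality}, the plan is to expand $I$ over the level sets $R_{\mathbf{s}}$ and pin down $\psi$ on $\{0,1\}^k$ support by support. Exactly as in \eqref{eq:univ_levelset},
\[
I(A_1,\ldots,A_k)=\sum_{\mathbf{0}\neq\mathbf{s}\in\{0,1\}^k}\psi(\mathbf{s})\,\chi(R_{\mathbf{s}}),
\]
and the target $\chi(C_j)$ admits the same expansion with $\psi$ replaced by $\varphi_j(\mathbf{s})=\mathbf{1}[|\mathrm{supp}(\mathbf{s})|\ge j]$ (Definition~\ref{def:floors}). It therefore suffices to show $\psi(\mathbf{s})=\varphi_j(\mathbf{s})$ for every $\mathbf{s}\neq\mathbf{0}$. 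Every $R_{\mathbf{s}}$ is a difference of compact sets, so each term is well defined.

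First I would use \emph{Reduction} to handle every $\mathbf{s}$ with $\mathrm{supp}(\mathbf{s})\neq[k]$. Choose $l\notin S=\mathrm{supp}(\mathbf{s})$. Setting $A_l=\emptyset$ kills every level set whose support contains $l$. Condition~(4) then says that the restricted sum $\sum_{\mathbf{s}:\,s_l=0}\psi(\mathbf{s})\chi(R_{\mathbf{s}})$ equals $\sum_{\mathbf{s}:\,s_l=0}\varphi_j(\mathbf{s})\chi(R_{\mathbf{s}})$ for all admissible configurations of the remaining $k-1$ offsets. Now induct on $|S|$, using the configurations of the proof of Theorem~\ref{thm:universality}: the clouds in $S$ meet in $n$ disjoint common lenses, and the other clouds are far away and chosen with $\chi\neq0$, or with independently arbitrary $\chi$ in the base case. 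This gives $(\psi(\mathbf{s})-\varphi_j(\mathbf{s}))\,n=0$ for arbitrary $n\ge1$, hence $\psi(\mathbf{s})=\varphi_j(\mathbf{s})$. When $k=j$, Reduction is vacuous, and Separation alone kills all proper supports: every proper-support configuration has $C_k=\emptyset$, so the induction of Theorem~\ref{thm:universality} applies verbatim.

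It remains to fix $\psi(1,\ldots,1)$. Here I would use \emph{Normalization}. Take $k$ balls with a common point, arranged so that $C_j$ is contractible (for instance, all offsets equal to the same ball). With every proper-support coefficient already equal to $\varphi_j$, the expansion reduces to $\chi(C_j)-\chi(R_{\mathbf{1}})+\psi(\mathbf{1})\chi(R_{\mathbf{1}})=1$. Since $\chi(C_j)=1$ and $\chi(R_{\mathbf{1}})=\chi(\bigcap_iA_i)=1$ for a single lens, this forces $\psi(\mathbf{1})=1=\varphi_j(\mathbf{1})$. Separation is then automatically consistent: if $C_j=\emptyset$, every level set with $|S|\ge j$ is empty and every coefficient with $|S|<j$ is $0$.

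I expect the main obstacle to be the geometric realizability step. The argument needs configurations of disjoint point clouds whose ball unions realize level sets with prescribed, arbitrary $\chi(R_{\mathbf{s}})$ while all other level sets have controlled Euler characteristic. In particular, for supports $S$ with $|S|\ge j$ the isolated-lens trick must be paired with distant clouds so that the other terms are already known. I would first try the ``$n$ disjoint $S$-lenses plus far-away singleton balls'' construction and verify by direct inclusion–exclusion that the only unknown surviving coefficient is $\psi(\mathbf{s})$. If necessary, I would fall back on Reduction applied to several indices $l$ simultaneously.
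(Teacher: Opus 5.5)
Your proposal is correct and takes essentially the paper's route: expand over the level sets; fix every proper-support coefficient by Reduction (or, when $k=j$, by Separation exactly as in Theorem~\ref{thm:universality}); then fix $\psi(1,\ldots,1)$ by Normalization on a single-lens configuration, with your induction on $|S|$ over isolated lenses making explicit the paper's one-line claim that the level-set Euler characteristics can be prescribed arbitrarily. One small slip: your example ``all offsets equal to the same ball'' cannot be realized, since the clouds are pairwise disjoint and a closed ball determines its center. Any $k$ balls with distinct centers sharing a point $p$ work instead, because $C_j$ is then star-shaped about $p$ and $\bigcap_i A_i$ is a single lens; so do the paper's nested balls $A_1\supseteq\cdots\supseteq A_k$, which have the further advantage of making every proper-support level set have $\chi=0$, so the lower coefficients are not needed there.
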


\begin{proof}
By induction on $k \geq j$.
For $k = j$, Reduction is vacuous, $C_j(\mathbf{t}) = \bigcap_i A_i$, and the statement is Theorem~\ref{thm:universality}, whose normalizing configuration (a single $k$-fold lens) satisfies the present Normalization.

Let $k > j$, assume the statement for $k - 1$ clouds, and decompose over the level sets as in~\eqref{eq:univ_levelset}:
\[
    \Psi(\mathcal{X})(\mathbf{t}) = I(A_1, \ldots, A_k) = \sum_{\mathbf{0} \neq \mathbf{s}} \psi(\mathbf{s})\, \chi(R_{\mathbf{s}}),
    \qquad
    \chi(R_{\mathbf{s}}) = \chi\Big(\bigcap_{i \in S} A_i\Big) - \chi\Big(\bigcap_{i \in S} A_i \cap \bigcup_{l \notin S} A_l\Big),
\]
$S = \mathrm{supp}(\mathbf{s})$, every evaluation on a compact semi-algebraic set.
By Definition~\ref{def:floors}, $\mathbf{1}_{C_j} = \varphi_j$ with $\varphi_j(\mathbf{s}) = \mathbf{1}_{[\,|S| \geq j\,]}$, and the $(k{-}1)$-cloud indicator is its restriction: $\varphi_j(s_1, \ldots, s_{k-1}, 0)$ is the $j$-th-floor indicator of the first $k-1$ clouds.
We show $\psi = \varphi_j$.

\emph{Below the floor.} Separation forces $\psi(\mathbf{s}) = 0$ for $0 < |S| < j$, by the same induction on support size as in Theorem~\ref{thm:universality}: place $n$ disjoint common lenses of the clouds in $S$ far from all other clouds, so that fewer than $j$ clouds meet anywhere and $C_j(\mathbf{t}) = \emptyset$; the level-set sum collapses to $\psi(\mathbf{s})\,\chi(R_{\mathbf{s}}) = 0$ with $\chi(R_{\mathbf{s}}) = n$ arbitrary.

\emph{Coefficients with a zero coordinate.} Emptying the $l$-th offset ($A_l = \emptyset$; the pointwise Euler form is defined for all finite unions of balls, including $\emptyset$), Reduction and the induction hypothesis for the remaining $k - 1$ clouds give
\[
    \sum_{\mathbf{s}:\, s_l = 0} \psi(\mathbf{s})\, \chi(R_{\mathbf{s}})
    \;=\; \chi\big(C_j\big((A_i)_{i \neq l}\big)\big)
    \;=\; \sum_{\mathbf{s}:\, s_l = 0} \varphi_j(\mathbf{s})\, \chi(R_{\mathbf{s}})
\]
for every configuration of the remaining clouds; since the compact evaluations in the difference formula for $\chi(R_{\mathbf{s}})$ can be realized with arbitrary integer values, the coefficients match: $\psi(\mathbf{s}) = \varphi_j(\mathbf{s})$ whenever $\mathbf{s}$ has a zero coordinate---in particular $\psi(\mathbf{s}) = 1$ for $j \leq |S| < k$.

\emph{The top coefficient.} Take nested offsets $A_1 \supseteq A_2 \supseteq \cdots \supseteq A_k \neq \emptyset$, single balls around distinct nearby centers.
Then $\bigcap_{i \in S} A_i = A_{\max S}$ for every $S$, so $C_j(\mathbf{t}) = A_j$ is a ball---contractible---and all $k$ offsets share the points of $A_k$: Normalization applies and $I(A_1, \ldots, A_k) = 1$.
In this configuration $\chi(R_{\mathbf{s}}) = 0$ for every $\mathbf{s} \neq (1, \ldots, 1)$: if some $l \notin S$ has $l < \max S$, then $A_{\max S} \cap \bigcup_{l \notin S} A_l = A_{\max S}$ and the difference vanishes; otherwise $S = \{1, \ldots, m\}$ with $m < k$ and the difference is $\chi(A_m) - \chi(A_{m+1}) = 0$.
Only the full-support term survives, so $1 = I(A_1, \ldots, A_k) = \psi(1, \ldots, 1)\, \chi(A_k) = \psi(1, \ldots, 1)$.

Hence $\psi = \varphi_j$, so $\Psi(\mathcal{X})(\mathbf{t}) = \int \varphi_j(\mathbf{1}_{A_1}, \ldots, \mathbf{1}_{A_k})\, d\chi = \chi(C_j(\mathbf{t}))$.
(For $k = 3$, $j = 2$ this is $\chi(A_1 \cap A_2) + \chi(A_2 \cap A_3) + \chi(A_3 \cap A_1) - 2\,\chi(A_1 \cap A_2 \cap A_3)$; for $j = k$ it is Theorem~\ref{thm:universality}.)
\end{proof}

\smallskip

The spectrum is not merely definable but \emph{stable}: the same offset containment that stabilizes $\Delta\chi$ stabilizes every floor and every relative pair at once.

\begin{proposition}[Stability of the interaction spectrum]\label{prop:spectrum_stability}
Let $\mathcal{X} = (X_1,\ldots,X_k)$ and $\mathcal{X}' = (X'_1,\ldots,X'_k)$ be objects of $\mathbf{PC}_k(\R^d)$ with $\epsilon = \max_i d_H(X_i, X'_i)$.
For each $j$ the floor filtrations $\{C_j(\mathbf{t})\}$ and $\{C'_j(\mathbf{t})\}$ are $\epsilon$-interleaved, and so are the pair filtrations $\{(C_j(\mathbf{t}), C_{j+1}(\mathbf{t}))\}$ (with the convention $\mathcal{U}(X; s) = \emptyset$ for $s < 0$, so that the interleaving inclusions are defined at all scales).
Hence the persistence modules $PH_*(C_j)$ and the relative modules $PH_*(C_j, C_{j+1})$ are $\epsilon$-interleaved, and on the diagonal $t_1 = \cdots = t_k$ their barcodes are within bottleneck distance $\epsilon$ \citep{chazal2016structure}.
\end{proposition}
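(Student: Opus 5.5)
The plan is to reduce everything to the cloudwise offset containment already used in Theorem~\ref{thm:algebraic_stability}(1), and to show that it survives every set operation used to build the floors. Since $d_H(X_i, X'_i) \le \epsilon$ for each $i$, every $s \ge 0$ gives
\[
\mathcal{U}(X_i; s) \subseteq \mathcal{U}(X'_i; s+\epsilon) \quad\text{and}\quad \mathcal{U}(X'_i; s) \subseteq \mathcal{U}(X_i; s+\epsilon).
\]
With the convention $\mathcal{U}(X;s) = \emptyset$ for $s<0$, these hold trivially at negative scales too, so they are valid for every $s \in \R$.

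\textbf{Floor filtrations.} Write $\mathbf{t}+\epsilon$ for $(t_1+\epsilon,\ldots,t_k+\epsilon)$. Both $\cap$ and $\cup$ are monotone in each argument. Hence for each $S$ with $|S| = j$, intersecting the containments over $i \in S$ and then taking the union over all such $S$ gives $C_j(\mathbf{t}) \subseteq C'_j(\mathbf{t}+\epsilon)$. Exchanging the roles of $\mathcal{X}$ and $\mathcal{X}'$ gives $C'_j(\mathbf{t}) \subseteq C_j(\mathbf{t}+\epsilon)$. All of these maps are inclusions of subsets of $\R^d$, so the two composites
\[
C_j(\mathbf{t}) \subseteq C'_j(\mathbf{t}+\epsilon) \subseteq C_j(\mathbf{t}+2\epsilon) \quad\text{and}\quad C'_j(\mathbf{t}) \subseteq C_j(\mathbf{t}+\epsilon) \subseteq C'_j(\mathbf{t}+2\epsilon)
\]
are the internal structure inclusions of the filtrations. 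The triangles and squares commute automatically, so this is an $\epsilon$-interleaving of (multi)filtrations.

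\textbf{Pair filtrations.} The same monotonicity applied to $C_{j+1}$ gives $C_{j+1}(\mathbf{t}) \subseteq C'_{j+1}(\mathbf{t}+\epsilon)$. So each inclusion above is a map of pairs $(C_j(\mathbf{t}), C_{j+1}(\mathbf{t})) \to (C'_j(\mathbf{t}+\epsilon), C'_{j+1}(\mathbf{t}+\epsilon))$: it sends the subspace into the subspace. Commutativity again holds because every map is an inclusion. Applying $H_*(-;\mathbb{F})$, and relative homology $H_*(-,-;\mathbb{F})$ for the pairs, is functorial. It therefore carries the interleaving of filtrations to an $\epsilon$-interleaving of the modules $PH_*(C_j)$ and $PH_*(C_j, C_{j+1})$, with each degree handled separately.

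\textbf{Barcodes on the diagonal.} Restrict to $t_1 = \cdots = t_k = r$. This gives one-parameter modules that are $\epsilon$-interleaved. To apply the isometry theorem of \citet{chazal2016structure} we need them to be q-tame, or pointwise finite-dimensional. The absolute modules are fine: $C_j(r)$ is a finite union of finite intersections of balls, so by the nerve lemma (or a semi-algebraic triangulation) it has finite-dimensional homology. For the pairs, the long exact sequence bounds $\dim H_q(C_j, C_{j+1})$ by $\dim H_q(C_j) + \dim H_{q-1}(C_{j+1})$, so these are finite-dimensional as well. The isometry theorem then converts $d_I \le \epsilon$ into a bottleneck bound of $\epsilon$.

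The main obstacle I expect is bookkeeping, not substance. The point is to make sure that the maps of pairs really are well defined at small scales. There, one floor may be empty while the shifted floor is not, and the convention for negative scales is what makes the shifted inclusions exist for all $\mathbf{t}$. The remaining step is to check the finiteness hypothesis for the relative modules, as described above.
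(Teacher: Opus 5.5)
Your proposal is correct and follows essentially the same route as the paper. Both arguments start from the cloudwise offset containment, note that it is preserved by intersecting over each $|S|=j$ and then taking unions, observe that it respects $C_{j+1}\subseteq C_j$ (so pairs map to pairs), and then pass through functoriality of (relative) homology and the isometry theorem on the diagonal. Two small additions on your side: your check that the absolute and relative modules are pointwise finite-dimensional (nerve lemma plus the long exact sequence) supplies a hypothesis of the isometry theorem that the paper leaves implicit, and because you only use upward shifts $\mathbf{t}\mapsto\mathbf{t}+\epsilon\mathbf{1}$ from $\mathbf{t}\ge 0$, your formulation never actually needs the negative-scale convention.
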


\begin{proof}
Since $d_H(X_i, X'_i) \le \epsilon$, we have $\mathcal{U}(X'_i; t_i - \epsilon) \subseteq \mathcal{U}(X_i; t_i) \subseteq \mathcal{U}(X'_i; t_i + \epsilon)$ for $t_i \ge \epsilon$.
Intersecting over any $S$ with $|S| = j$ preserves both inclusions, and taking the union over such $S$ preserves them again, so
\[
    C'_j(\mathbf{t} - \epsilon\mathbf{1}) \;\subseteq\; C_j(\mathbf{t}) \;\subseteq\; C'_j(\mathbf{t} + \epsilon\mathbf{1}),
\]
and symmetrically with $\mathcal{X}, \mathcal{X}'$ exchanged; this is an $\epsilon$-interleaving of the floor filtrations.
The containment $C_{j+1} \subseteq C_j$ is respected by these inclusions, so the interleaving maps carry pairs to pairs and the pair filtrations are $\epsilon$-interleaved as well.
Passing to homology gives the module interleavings, and the isometry theorem gives the diagonal bottleneck bound.
\end{proof}

\begin{remark}[Computable simplicial model; special cases]\label{rem:spectrum_nerve}
Each floor carries a nerve model extending Proposition~\ref{prop:nerve}: $C_j(\mathbf{t})$ is covered by the collection $\mathcal{L}_j(\mathbf{t})$ of nonempty convex $j$-fold lenses
\[
\bigcap_{i\in S} B(x_i; t_i), \qquad x_{i} \in X_i, \quad |S| = j
\]
(one center $x_i \in X_i$ per index $i \in S$), so $C_j(\mathbf{t}) \simeq \mathcal{N}(\mathcal{L}_j(\mathbf{t}))$ by the Nerve Theorem, and the modules of Proposition~\ref{prop:spectrum_stability} are computable simplicial persistence modules.
The top floor $j = k$ is $\Delta\chi = \chi(\mathcal{N}(\mathcal{L}))$ (Proposition~\ref{prop:nerve}), where Proposition~\ref{prop:spectrum_stability} recovers the intersection-filtration stability of Theorem~\ref{thm:algebraic_stability}(1); for $k = 2$ the relative module $H_*(C_1, C_2)$ recovers the relative interaction module of Theorem~\ref{thm:relative_stability}.
\end{remark}

Computationally the spectrum costs no more than its top floor: by the nerve model of Remark~\ref{rem:spectrum_nerve} each $C_j(\mathbf{t}) \simeq \mathcal{N}(\mathcal{L}_j(\mathbf{t}))$, so every $\chi(C_j)$ is a signed Alpha-complex count---assembled, like $\Delta\chi$ itself, by inclusion--exclusion over the constituent unions (Section~\ref{sec:algorithm})---and the graded modules $H_*(C_j)$ are computable simplicial persistence modules. No machinery beyond that for $\Delta\chi$ is required.

\section{Properties of the Intersection ECP}\label{sec:props}

Having fixed the construction, its stability (Theorem~\ref{thm:algebraic_stability}), and its universality (Theorem~\ref{thm:universality}), we now record the geometric and analytic properties that make $\Delta\chi$ usable as an interaction descriptor.
Section~\ref{sec:separation} locates where the profile vanishes---a triangular dead zone---and records its elementary boundary and limit behavior;
Section~\ref{sec:betti_decomp} refines it to the Betti level through a Mayer--Vietoris decomposition of the overlap;
Section~\ref{sec:discrimination} identifies what $\Delta\chi$ computes for dense samples of fixed geometry;
Section~\ref{sec:geometric_bounds} gives geometric bounds on its {size---how large the integer $|\Delta\chi|$ can grow};
and Section~\ref{sec:multiparameter_stability} extends the stability theory to the full $k$-parameter surface.
Throughout, $\mathcal{X} = (X_1, \ldots, X_k)$ is a tuple of pairwise disjoint finite clouds with $|X_i| = n_i$ and $n = \sum_i n_i$, {$\mathbf{t} = (t_1, \ldots, t_k) \in \R_{\geq 0}^k$ is the parameter, and $\Delta\chi(\mathbf{t}) = \chi\big(\bigcap_i \mathcal{U}(X_i; t_i)\big)$ as in Theorem~\ref{thm:intersection}. With a few exceptions the results are stated for general $k$; a few use a common scale $t_1 = \cdots = t_k = r$ (the diagonal), and one---the single-connecting-map Betti refinement---is intrinsically two-cloud, as noted there.}

\subsection{Separation and Basic Properties}\label{sec:separation}

The most basic questions about an interaction descriptor are when it registers nothing and how large it can grow when it does; we answer both here.
The vanishing side is governed by separation: two clouds cannot overlap until their offsets close the gap between them, and this alone forces $\Delta\chi$ to be zero---the rigorous form of desideratum~(P1), which the lemma and dead-zone corollary below make precise.

\begin{lemma}[Vanishing for separated clouds]\label{lem:separation}
Write $d_{\min}(X_i, X_j) = \min_{x \in X_i, y \in X_j} \|x - y\|$. If for some pair $i \neq j$ we have $t_i + t_j < d_{\min}(X_i, X_j)$, then
\[
    \Delta\chi(\mathbf{t};\, \mathcal{X}) = 0.
\]
\end{lemma}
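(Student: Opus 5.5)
The plan is to show that the $k$-fold overlap is empty and then read off $\Delta\chi$ from the Intersection Theorem (Theorem~\ref{thm:intersection}). Since $\bigcap_{l} \mathcal{U}(X_l; t_l) \subseteq \mathcal{U}(X_i; t_i) \cap \mathcal{U}(X_j; t_j)$, it suffices to prove that this pairwise overlap is empty under the hypothesis $t_i + t_j < d_{\min}(X_i, X_j)$.

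For the pairwise step, suppose for contradiction that some $z \in \mathcal{U}(X_i; t_i) \cap \mathcal{U}(X_j; t_j)$. By the definition of the ball union there are $x \in X_i$ and $y \in X_j$ with $\|z - x\| \le t_i$ and $\|z - y\| \le t_j$; the balls are closed, matching the convention used throughout. The triangle inequality then gives
\[
\|x - y\| \le \|x - z\| + \|z - y\| \le t_i + t_j < d_{\min}(X_i, X_j) \le \|x - y\|,
\]
which is a contradiction.

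Hence $\bigcap_l \mathcal{U}(X_l; t_l) = \emptyset$, and $\chi(\emptyset) = 0$. Equivalently, $\prod_l \mathbf{1}_{\mathcal{U}(X_l;t_l)} = \mathbf{1}_\emptyset = 0$ in $\mathbf{CF}(\R^d)$, and by Definition~\ref{def:euler_integral} its Euler integral is $0$.

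There is no real obstacle. The only points needing care are that the hypothesis is a strict inequality, which is needed because the balls are closed (tangency at $t_i + t_j = d_{\min}$ would give a nonempty one-point overlap with $\chi = 1$), and that the empty set contributes $0$. The latter holds by linearity, since $\mathbf{1}_\emptyset = 0$ as recorded among the valuation relations.
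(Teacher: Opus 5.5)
Your proof is correct and follows the paper's argument essentially verbatim: the triangle inequality (which the paper phrases as disjointness of the balls $B(x,t_i)$ and $B(y,t_j)$) makes the pairwise overlap empty, so the $k$-fold overlap is empty, and the Intersection Theorem gives $\Delta\chi(\mathbf{t}) = \chi(\emptyset) = 0$. Your remark that the strict inequality is needed because the balls are closed also matches the paper's tangency discussion in the dead-zone lemma at $r + s = d_{\min}$.
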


\begin{proof}
When $t_i + t_j < d_{\min}(X_i, X_j)$, the balls $B(x, t_i)$ and $B(y, t_j)$ are disjoint for all $x \in X_i$, $y \in X_j$, since $\|x - y\| \geq d_{\min}(X_i, X_j) > t_i + t_j$. Hence already $\mathcal{U}(X_i; t_i) \cap \mathcal{U}(X_j; t_j) = \emptyset$, so a fortiori $\bigcap_l \mathcal{U}(X_l; t_l) = \emptyset$, and by Theorem~\ref{thm:intersection}, $\Delta\chi(\mathbf{t}) = \chi(\emptyset) = 0$.
\end{proof}

Both this and the {dead-zone lemma} below are immediate from the Intersection Theorem; we state them as named results because each is invoked later (Lemma~\ref{lem:separation} in Theorem~\ref{thm:discrimination}, the dead zone in the onset-surface discussion that follows).
Summing this pairwise obstruction over the tuple globalizes it to a region of parameter space on which the profile is identically zero. { The dead zone, together with the profile's elementary boundary and limit behavior is collected in a single lemma.}

\begin{lemma}[Dead zone, boundary, and limit]\label{cor:dead_zone}
Let $d_{\min} = \min_{i \neq j} d_{\min}(X_i, X_j)$ be the smallest pairwise cross-cloud distance.
\begin{enumerate}
    \item \textbf{Dead zone.} The Intersection ECP vanishes on the region $\{\mathbf{t} \in \R_{\geq 0}^k : \sum_i t_i < d_{\min}\}$. For $k = 2$ this is the triangle $\{(r,s) : r + s < d_{\min}\}$; on the diagonal it recovers the interval $[0, d_{\min}/2)$. Along any ray from the origin, the first non-zero value of $\Delta\chi$ occurs no earlier than the crossing of the boundary hyperplane $\sum_i t_i = d_{\min}$---exactly at the crossing when $k = 2$, and possibly strictly later when $k \geq 3$.
    \item \textbf{Boundary.} If some $t_i = 0$, the overlap lies in the finite set $X_i$, and
    \[
        \Delta\chi(\mathbf{t}) = \#\{x \in X_i : d(x, X_j) \leq t_j \text{ for all } j \neq i\} \geq 0.
    \]
    In particular $\Delta\chi(\mathbf{0}) = 0$.
    \item \textbf{Limit.} $\Delta\chi(\mathbf{t}) \to 1$ as $\min_i t_i \to \infty$.
\end{enumerate}
\end{lemma}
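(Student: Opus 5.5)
The plan is to reduce all three parts to the Intersection Theorem (Theorem~\ref{thm:intersection}), $\Delta\chi(\mathbf{t}) = \chi\big(\bigcap_i \mathcal{U}(X_i;t_i)\big)$, and to read off the geometry of the overlap in each regime.

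For the \emph{dead zone}, suppose $\sum_i t_i < d_{\min}$. Pick a pair $i \neq j$ realizing $d_{\min}$, or indeed any pair. Since all $t_l \geq 0$, we have $t_i + t_j \leq \sum_l t_l < d_{\min} \leq d_{\min}(X_i,X_j)$. Lemma~\ref{lem:separation} then gives $\Delta\chi(\mathbf{t}) = 0$. The $k=2$ triangle and the diagonal interval $[0,d_{\min}/2)$ are specializations: on the diagonal $kr < d_{\min}$, and for $k=2$ this is $r<d_{\min}/2$. For the ray statement, "no earlier" follows directly from vanishing below the hyperplane.

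For "exactly at the crossing when $k=2$", take a ray $\mathbf{t} = \lambda(a,b)$ and the closest pair $x\in X_1$, $y\in X_2$. At $\lambda(a+b) = d_{\min}$ the two balls $B(x,\lambda a)$ and $B(y,\lambda b)$ touch. I need to check that the overlap is then a finite set of tangency points, each contributing $+1$. Closedness of the balls and $d_{\min}$ being the minimum imply every point of the intersection is a tangency point of some closest pair, so the overlap is nonempty and finite and $\chi>0$. The degenerate rays with $a=0$ or $b=0$ fall under part (2).

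For "possibly strictly later when $k\geq3$", I would exhibit the encircling-type example. Take three clouds at the vertices of a large equilateral triangle, or the configuration of Figure~\ref{fig:motivating}(c). Pairwise overlaps appear at the hyperplane, while the triple overlap stays empty longer, so the profile is still $0$ there.

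For the \emph{boundary}, if $t_i = 0$ then $\mathcal{U}(X_i;0) = X_i$, so the overlap is $\{x\in X_i : x \in \mathcal{U}(X_j;t_j)\ \forall j\neq i\}$. This is a finite discrete set whose Euler characteristic is its cardinality, and membership in $\mathcal{U}(X_j;t_j)$ is the condition $d(x,X_j)\leq t_j$ since $X_j$ is finite. At $\mathbf{t}=\mathbf{0}$, pairwise disjointness makes the count zero.

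For the \emph{limit}, let $D$ be the diameter of $\bigcup_i X_i$ and fix any point $p\in X_1$. Once $\min_i t_i \geq D$, every ball $B(x,t_i)$ with $x\in X_i$ contains $p$, so $p$ lies in the overlap. The key observation is that each $\mathcal{U}(X_i;t_i)$ is a union of balls all containing $p$, hence star-shaped about $p$. An intersection of sets star-shaped about a common point is again star-shaped about that point, hence contractible, so $\chi = 1$ for all such $\mathbf{t}$. This gives eventual equality, which is stronger than the limit.

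The main obstacle I anticipate is the $k=2$ "exactly at the crossing" claim. It needs care about ties, where several closest pairs may share a tangency point. It also needs care with rays having a zero coordinate, and with confirming that the touching set is finite, a union of isolated points rather than something larger. I would handle these by noting that any intersection point $z$ satisfies $\|z-x\|+\|z-y\|\leq \lambda(a+b) = d_{\min}\leq\|x-y\|$. This forces $z$ onto the segment $[x,y]$ at a determined position, so there is exactly one point per closest pair.
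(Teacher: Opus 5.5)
Your proposal is correct and follows the paper's proof essentially step for step: Lemma~\ref{lem:separation} for the dead zone, $\mathcal{U}(X_i;0)=X_i$ and cardinality for the boundary, and star-shapedness about a common point for the limit. Your segment argument ($\|z-x\|+\|z-y\|\leq \lambda(a+b)=d_{\min}\leq\|x-y\|$) just makes explicit the paper's assertion that the $k=2$ overlap at the crossing is a finite, nonempty set of tangency points.

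The $k\geq 3$ example has two small slips that do not affect its conclusion. Figure~\ref{fig:motivating}(c) is a two-cloud configuration, so it cannot illustrate $k\geq 3$. Also, pairwise overlaps generally do not appear at the hyperplane: on the diagonal it is crossed at $r=d_{\min}/k$, before any pair touches. Your equilateral triangle still works, and in fact every configuration works along every ray. A $k$-fold overlap forces $t_i+t_j\geq d_{\min}$ for every pair, and summing over pairs gives $\sum_i t_i\geq \tfrac{k}{2}d_{\min}>d_{\min}$.
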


\begin{proof}
(1) If $\sum_l t_l < d_{\min}$ then for the closest pair $(i,j)$, since the $t_l \geq 0$, $t_i + t_j \leq \sum_l t_l < d_{\min} \leq d_{\min}(X_i, X_j)$, and Lemma~\ref{lem:separation} gives $\Delta\chi(\mathbf{t}) = 0$.
For $k = 2$, at $r + s = d_{\min}$ the closed balls around a closest pair $x \in X$, $y \in Y$ with $\|x - y\| = d_{\min}$ are externally tangent, so the overlap becomes nonempty for the first time---a finite set of tangency points---and $\Delta\chi$ equals their number $\geq 1$.
For $k \geq 3$, $\sum_i t_i = d_{\min}$ makes only the closest \emph{pair} of clouds touch, while the $k$-fold overlap requires a point common to \emph{every} cloud; a third cloud may still be far away, so the first non-zero value can occur strictly later.
(2) $t_i = 0$ gives $\mathcal{U}(X_i; 0) = X_i$, so $\bigcap_l \mathcal{U}(X_l; t_l) \subseteq X_i$ is a finite point set; $x \in X_i$ lies in it iff $d(x, X_j) \leq t_j$ for every $j \neq i$, and $\chi$ of a finite set is its cardinality. Pairwise disjointness kills the count at $\mathbf{t} = \mathbf{0}$.
(3) Fix any $p \in \R^d$; once $t_i \geq \max_{x \in X_i}\|p - x\|$ for every $i$---which holds for $\min_i t_i$ large---each $\mathcal{U}(X_i; t_i)$ is star-shaped about $p$, since for $x \in B(x_0, t_i)$ convexity of the ball and $\|p - x_0\| \leq t_i$ give $[p, x] \subseteq B(x_0, t_i) \subseteq \mathcal{U}(X_i; t_i)$. The intersection of sets star-shaped about the common point $p$ is star-shaped about $p$, hence contractible, so $\Delta\chi(\mathbf{t}) = 1$.
\end{proof}

The dead zone splits the profile's information in two: \emph{where} interaction begins (metric content) and \emph{what} the overlap then looks like (topological content).

Two consequences.
First, the ``where'' is richer than the {lemma's} bound, which is only an outer bound on the full zero set $\{\mathbf{t} : \bigcap_i \mathcal{U}(X_i; t_i) = \emptyset\}$: for $k \geq 3$ the boundary of the zero set (the \emph{onset surface}) carries strictly more than the single scale at which it meets the diagonal.
For $X_1 = \{0\}$, $X_2 = \{1\}$, $X_3 = \{10\}$ in $\R$, Helly's theorem in dimension one gives the zero set as the complement of $\{t_1 + t_2 \geq 1,\ t_2 + t_3 \geq 9,\ t_1 + t_3 \geq 10\}$: the facet offsets are the pairwise distances, so the surface recovers all of $1, 9, 10$, while the diagonal---piercing the surface at $(5,5,5)$, interior to the facet $t_1 + t_3 = 10$---recovers only $10 = 2r^*$, and the {lemma's} hyperplane $\sum_i t_i = d_{\min}$ only $1$.
(For $k = 2$ the onset surface \emph{is} the hyperplane $r + s = d_{\min}$, and the diagonal carries the same information.)

Second, a scalar summary should not \emph{mix} the two kinds of content, and the peak statistic is the one that does not: rescaling a configuration by $\lambda > 0$ sends $\Delta\chi(r)$ to $\Delta\chi(r/\lambda)$ (Lemma~\ref{lem:scale_equivariance}), multiplying $\int |\Delta\chi|\,dr$ by $\lambda$ while leaving $\max_r |\Delta\chi|$ unchanged---the integral carries units of scale, conflating topology with onset and duration, while the max is scale-free, reading the topological content alone.
(Here $\max_r$ is over the diagonal profile, the paper's peak-statistic convention; the same scale-invariance holds verbatim for the multiparameter supremum $\max_{\mathbf{t}} |\Delta\chi(\mathbf{t})|$, which can exceed the diagonal max, off-diagonal scales reaching overlaps the diagonal misses (Figure~\ref{fig:mp_surface}).
Per Remark~\ref{rem:what_universality_claims}, neither summary measures interaction \emph{strength}; the contrast is scale-dependence, not magnitude.)

Figure~\ref{fig:mp_surface} makes the two-parameter structure concrete for an encircling pair (Figure~\ref{fig:motivating}(c), $k = 2$): a triangular dead zone below $r + s = d_{\min}$, an asymmetric onset ridge just beyond it where the interaction first registers, and---already at $k = 2$---off-diagonal values (here $|\Delta\chi|$ up to $10$) that the diagonal slice (maximum $4$) never attains. The diagonal profile $\Delta\chi(r) = \Delta\chi(r,r)$ is a single line through this surface.

\begin{figure}[t]
\centering
\includegraphics[width=\textwidth]{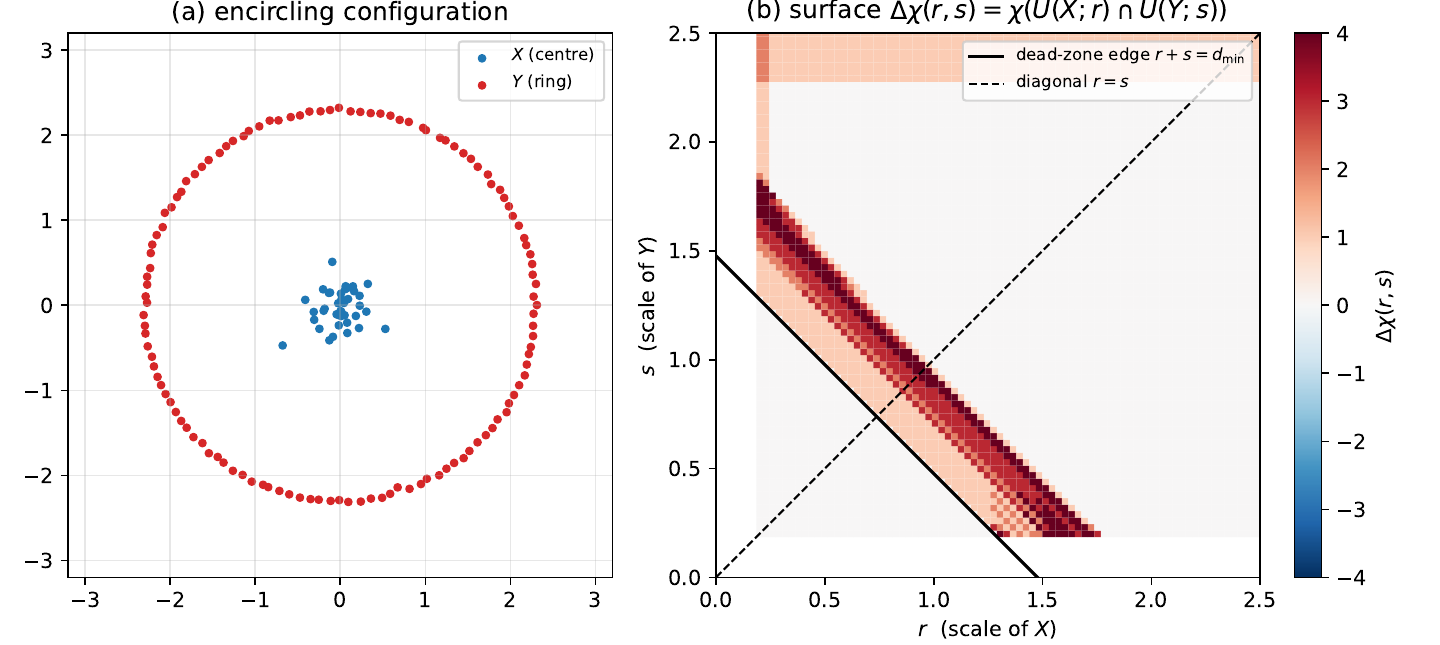}
\caption{The multiparameter surface $\Delta\chi(r,s) = \chi(\mathcal{U}(X;r) \cap \mathcal{U}(Y;s))$ for the encircling configuration \textbf{(a)}, computed exactly by the weighted (power) Alpha complex (Proposition~\ref{prop:alpha_formula}). \textbf{(b)} Below the dead-zone edge $r + s = d_{\min}$ (solid) the profile vanishes; just beyond it an asymmetric onset ridge carries the interaction. The diagonal $r = s$ (dashed) is a single slice: off the diagonal the surface reaches $|\Delta\chi| = 10$, where the diagonal reaches only $4$.}
\label{fig:mp_surface}
\end{figure}

Beyond the dead zone the profile is no longer forced to vanish. {One further elementary feature---the convex-cell decomposition on which the later geometric bounds build---is recorded next}; it recurs throughout Sections~\ref{sec:geometric_bounds} and~\ref{sec:algorithm}.

\begin{lemma}[Cell decomposition and nerve model]\label{lem:cell_decomp}
For a tuple $\mathcal{X} = (X_1, \ldots, X_k)$ of pairwise disjoint clouds with $|X_i| = n_i$, at every $\mathbf{t}$ the overlap $\bigcap_i \mathcal{U}(X_i; t_i)$ is a union of at most $\prod_i n_i$ convex cells $\bigcap_i B(x_i, t_i)$, one per cross-tuple $\mathbf{x} = (x_1, \ldots, x_k) \in \prod_i X_i$.
Writing $\mathcal{B}_{\mathcal{X}}$ for the collection of nonempty such cells, the nerve $\mathcal{N}(\mathcal{B}_{\mathcal{X}})$ is homotopy equivalent to $\bigcap_i \mathcal{U}(X_i; t_i)$.
\end{lemma}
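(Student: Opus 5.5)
The plan is to distribute intersection over the unions and then apply the Nerve Theorem to the resulting cover by convex cells.

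\emph{Step 1: the cell decomposition.} Write each offset as $\mathcal{U}(X_i; t_i) = \bigcup_{x_i \in X_i} B(x_i, t_i)$. Intersection distributes over finite unions, so
\[
\bigcap_{i=1}^k \mathcal{U}(X_i; t_i) \;=\; \bigcap_{i=1}^k \bigcup_{x_i \in X_i} B(x_i, t_i) \;=\; \bigcup_{\mathbf{x} \in \prod_i X_i} \bigcap_{i=1}^k B(x_i, t_i).
\]
The index set $\prod_i X_i$ has $\prod_i n_i$ elements. Each cell $\bigcap_i B(x_i, t_i)$ is an intersection of closed Euclidean balls, hence compact and convex, and possibly empty. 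Discarding the empty cells gives the collection $\mathcal{B}_{\mathcal{X}}$, which still covers the overlap and has at most $\prod_i n_i$ members. The degenerate scales $t_i = 0$ need no special treatment: $B(x_i, 0) = \{x_i\}$ is still a closed convex set.

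\emph{Step 2: the nerve.} I will apply the Nerve Theorem in its version for finite covers of a subset of $\R^d$ by closed convex sets. Any subfamily $\{\bigcap_i B(x^{(a)}_i, t_i)\}_{a \in A}$ of $\mathcal{B}_{\mathcal{X}}$ has intersection $\bigcap_{a}\bigcap_i B(x^{(a)}_i, t_i)$. This is again a finite intersection of closed balls, so it is convex. It is therefore either empty or contractible. All the hypotheses of the Nerve Theorem hold, and $\mathcal{N}(\mathcal{B}_{\mathcal{X}}) \simeq \bigcap_i \mathcal{U}(X_i; t_i)$. When $\mathcal{B}_{\mathcal{X}}$ is empty, both sides are empty and the statement is trivially true.

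\emph{Main obstacle.} The only delicate point is using a form of the Nerve Theorem valid for \emph{closed} covers, rather than the open-cover or paracompact-good-cover form. I would cite the convex-set version (Leray / Borsuk), which applies to finite families of compact convex sets in $\R^d$. If a reference demands open covers instead, I would thicken each cell slightly. The family is finite and each cell is compact, so there is an $\eta > 0$ such that the $\eta$-neighbourhoods have exactly the same nonempty intersection pattern. These neighbourhoods are still convex, and their union deformation retracts onto the overlap.

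Pairwise disjointness of the clouds plays no role in this argument. It enters only through the ambient normalization of $\mathbf{PC}_k(\R^d)$.
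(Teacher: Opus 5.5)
Your proposal is correct and follows the paper's proof essentially step for step. You distribute $\bigcap_i$ over the finite unions to obtain at most $\prod_i n_i$ convex cells, observe that any finite intersection of cells is again an intersection of balls and hence empty or contractible, and apply the Nerve Theorem; your extra care about using a closed-cover (convex-set) version of the Nerve Theorem, or thickening to open convex sets, is a detail the paper leaves implicit.
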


\begin{proof}
Distributing the intersection over the unions,
$\bigcap_i \mathcal{U}(X_i; t_i) = \bigcup_{\mathbf{x} \in \prod_i X_i} \bigcap_i B(x_i, t_i)$,
and each cell $\bigcap_i B(x_i, t_i)$ is an intersection of Euclidean balls, hence convex; there are at most $\prod_i n_i$ of them.
The nonempty cells thus cover the overlap by convex sets whose finite intersections are again convex, so empty or contractible; the Nerve theorem yields $\mathcal{N}(\mathcal{B}_{\mathcal{X}}) \simeq \bigcap_i \mathcal{U}(X_i; t_i)$.
\end{proof}

One consequence of the cell decomposition is used repeatedly below: whenever the non-empty cells $\bigcap_i B(x_i, t_i)$ are pairwise disjoint, each is convex with $\chi = 1$, so by additivity $\Delta\chi(\mathbf{t})$ equals their number---the mechanism behind both the tightness of the covering-number bound and the fractal lower bound of Proposition~\ref{prop:fractal_lower} (Section~\ref{sec:geometric_bounds}), where the underlying sets are self-similar (Cantor-type) dusts.

\subsection{Betti-Number Decomposition}\label{sec:betti_decomp}

The interaction spectrum of Definition~\ref{def:floors} lives at the level of $\chi$; we now refine it one level down, to the Betti numbers of the overlap. This refinement is cleanest for two clouds, where a single Mayer--Vietoris sequence applies, so we fix $k = 2$ and write $A = \mathcal{U}(X_1; t_1)$, $B = \mathcal{U}(X_2; t_2)$, with $\Delta\beta_q = \beta_q(A) + \beta_q(B) - \beta_q(A \cup B)$ ($q$ the homological degree, not to be confused with the number of clouds). Applying $\chi = \sum_q (-1)^q \beta_q$ termwise gives the alternating identity
\begin{equation}\label{eq:betti_decomp}
    \Delta\chi = \sum_{q=0}^{d} (-1)^q \Delta\beta_q .
\end{equation}
The relationship between the individual $\Delta\beta_q$ and the Betti numbers of the overlap
is subtler than~\eqref{eq:betti_decomp} and {is given by} the full Mayer--Vietoris sequence. We record the two-cloud case, which already exposes the connecting-map ``hidden interaction'' terms; whether the analogous graded refinement for $k \geq 3$ admits a comparably clean description---through the Mayer--Vietoris spectral sequence of the cover $\{\mathcal{U}(X_i; t_i)\}_i$ or otherwise---we leave as an open question (Section~\ref{sec:discussion}).

\begin{proposition}[Mayer--Vietoris decomposition of overlap Betti numbers, $k = 2$]\label{prop:mv_betti}
With $A = \mathcal{U}(X_1; t_1)$, $B = \mathcal{U}(X_2; t_2)$, the Mayer--Vietoris long exact sequence yields, for every degree $q$,
\begin{equation}\label{eq:mv_betti}
    \beta_q(A \cap B) = \Delta\beta_q + \delta_q + \delta_{q+1},
\end{equation}
where $\delta_q = \rank\big(\partial_q : H_q(A \cup B) \to H_{q-1}(A \cap B)\big)$ is the rank of the $q$-th connecting homomorphism. In particular $\beta_q(A \cap B) \geq \Delta\beta_q$ for all $q$, with equality if and only if $\delta_q = \delta_{q+1} = 0$.
\end{proposition}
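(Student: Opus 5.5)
We will read the identity off the Mayer--Vietoris long exact sequence by splitting it into short exact sequences at each degree. Since $A$ and $B$ are finite unions of closed balls, they are compact semi-algebraic. The pair $\{A,B\}$ is therefore excisive: both sets are subcomplexes of a common triangulation of $A\cup B$, or equivalently closed ENRs / neighborhood deformation retracts. Hence the Mayer--Vietoris sequence over $\mathbb{F}$,
\[
\cdots \to H_{q+1}(A\cup B)\xrightarrow{\partial_{q+1}} H_q(A\cap B)\xrightarrow{\alpha_q} H_q(A)\oplus H_q(B)\xrightarrow{\gamma_q} H_q(A\cup B)\xrightarrow{\partial_q} H_{q-1}(A\cap B)\to\cdots,
\]
is exact, and all groups are finite-dimensional. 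Securing this excisiveness is the only non-formal point, and I would handle it first, by appeal to compatible semi-algebraic triangulation as in the discussion after Definition~\ref{def:euler_integral}.

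Next comes rank bookkeeping at the three positions of degree $q$:
\begin{itemize}
\item Exactness at $H_q(A\cap B)$ gives $\ker\alpha_q=\operatorname{im}\partial_{q+1}$, so $\beta_q(A\cap B)=\delta_{q+1}+\rank\alpha_q$.
\item Exactness at $H_q(A)\oplus H_q(B)$ gives $\beta_q(A)+\beta_q(B)=\rank\alpha_q+\rank\gamma_q$.
\item Exactness at $H_q(A\cup B)$ gives $\beta_q(A\cup B)=\rank\gamma_q+\delta_q$.
\end{itemize}
Eliminating $\rank\alpha_q$ and $\rank\gamma_q$ yields
\[
\beta_q(A\cap B)=\delta_{q+1}+\beta_q(A)+\beta_q(B)-\beta_q(A\cup B)+\delta_q=\Delta\beta_q+\delta_q+\delta_{q+1},
\]
which is \eqref{eq:mv_betti}. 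In degree $0$ the term $\delta_0$ is the rank of a map into $H_{-1}=0$, so it vanishes. In degrees above $d$ all groups vanish, since these are compact subsets of $\R^d$ (a nonempty compact subset of $\R^d$ has trivial $H_d$ as well), so the identity holds in all degrees $q\ge0$.

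Finally, the ranks $\delta_q,\delta_{q+1}$ are nonnegative integers, so $\beta_q(A\cap B)\ge\Delta\beta_q$, with equality exactly when $\delta_q+\delta_{q+1}=0$, i.e.\ when both ranks vanish. As a consistency check, we would take the alternating sum of \eqref{eq:mv_betti} over $q$. The $\delta$ terms telescope (each $\delta_q$ appears with signs $(-1)^q$ and $(-1)^{q-1}$), recovering $\chi(A\cap B)=\sum_q(-1)^q\Delta\beta_q$. This is \eqref{eq:betti_decomp} combined with Theorem~\ref{thm:intersection}.

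I expect no real obstacle. Everything past the excisiveness check is linear algebra over $\mathbb{F}$.
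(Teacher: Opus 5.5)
Your argument is correct and is exactly the paper's proof: rank--nullity at the three Mayer--Vietoris terms $H_q(A\cap B)$, $H_q(A)\oplus H_q(B)$, $H_q(A\cup B)$, then eliminating $\rank\alpha_q$ and $\rank\gamma_q$ to get $\beta_q(A\cap B)=\Delta\beta_q+\delta_q+\delta_{q+1}$. Your explicit check that $\{A,B\}$ is excisive (via a compatible semi-algebraic triangulation) covers a point the paper's proof passes over silently; your parenthetical that every nonempty compact subset of $\R^d$ has trivial $H_d$ should be restricted to polyhedra like these, since it fails for the singular homology of general compact sets, though the identity does not need that vanishing.
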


\begin{proof}
The Mayer--Vietoris sequence for $A \cup B$,
\[
    \cdots \to H_{q+1}(A \cup B) \xrightarrow{\partial_{q+1}} H_q(A \cap B) \xrightarrow{\iota_*} H_q(A) \oplus H_q(B) \xrightarrow{j_*} H_q(A \cup B) \xrightarrow{\partial_q} \cdots,
\]
is exact. From exactness at $H_q(A) \oplus H_q(B)$: $\rank(\iota_*) = \beta_q(A) + \beta_q(B) - \rank(j_q)$. From exactness at $H_q(A \cup B)$: $\rank(j_q) = \beta_q(A \cup B) - \delta_q$. Hence $\rank(\iota_*) = \Delta\beta_q + \delta_q$. From exactness at $H_q(A \cap B)$: $\beta_q(A \cap B) = \rank(\iota_*) + \delta_{q+1} = \Delta\beta_q + \delta_q + \delta_{q+1}$. Since $\delta_q, \delta_{q+1} \geq 0$, the inequality follows.
\end{proof}

\begin{corollary}[When the Betti decomposition is exact]\label{cor:exact_betti}
The equality $\beta_q(A \cap B) = \Delta\beta_q$ holds for all $q$ if and only if every connecting homomorphism $\partial_q$ vanishes---equivalently, if and only if the maps $\iota_* : H_q(A \cap B) \to H_q(A) \oplus H_q(B)$ are injective for all $q$.
{ This holds, for instance, (i) when $X_1, X_2$ are well-separated ($d_{\min} > t_1 + t_2$, so $A \cap B = \emptyset$); or (ii) when $A$ or $B$ deformation-retracts onto the overlap $A \cap B$ (one side already forces $\iota_*$ injective)---for example when $X_1, X_2$ are $\epsilon$-dense samples of the same convex compact $K$ at any scale $r \geq \epsilon$: then $K \subseteq A \cap B$, and the straight-line homotopy toward the nearest-point projection $\pi_K$ stays inside every ball constituting $A$ and $B$, so it deformation-retracts $A$, $B$, and $A \cap B$ onto $K$, making both inclusions homotopy equivalences.}
\end{corollary}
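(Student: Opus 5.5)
The plan is to read everything off formula~\eqref{eq:mv_betti} of Proposition~\ref{prop:mv_betti}, which gives $\beta_q(A\cap B)-\Delta\beta_q=\delta_q+\delta_{q+1}$ with both terms nonnegative ranks. For the main equivalence, suppose first that $\beta_q(A\cap B)=\Delta\beta_q$ for all $q$. Then $\delta_q+\delta_{q+1}=0$ for every $q$, and since ranks are nonnegative, every $\delta_q$ vanishes, i.e.\ every $\partial_q$ is zero. Conversely, if all $\partial_q=0$, the right side of~\eqref{eq:mv_betti} collapses to $\Delta\beta_q$ in every degree. For the second equivalence, exactness at $H_q(A\cap B)$ gives $\ker\iota_*=\operatorname{im}\partial_{q+1}$. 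Hence $\iota_*$ is injective in degree $q$ exactly when $\partial_{q+1}=0$. Quantifying over all $q$ identifies ``all $\partial$ vanish'' with ``all $\iota_*$ injective''. The only point to watch is the bottom degree: $\partial_0$ maps to $H_{-1}=0$, so it vanishes automatically, and the indexing matches.

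For example (i), if $A\cap B=\emptyset$ then $H_*(A\cap B)=0$. Every $\partial_q$ lands in zero and every $\iota_*$ is trivially injective, so the criterion applies at once. For the first part of example (ii), suppose $A$ deformation-retracts onto $A\cap B$. Then the inclusion $A\cap B\hookrightarrow A$ is a homotopy equivalence, so its induced map on $H_q$ is an isomorphism. Composing $\iota_*$ with the projection onto the $H_q(A)$ summand recovers this isomorphism, so $\iota_*$ is injective, and the criterion gives exactness. The case of $B$ is symmetric.

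The remaining work, and the main obstacle, is the convex-sample instance. Let $K$ be convex and compact, with each $X_i$ an $\epsilon$-dense sample of $K$, so $X_i\subseteq K$ and every point of $K$ lies within $\epsilon$ of $X_i$. Then $K\subseteq\mathcal{U}(X_i;r)$ for every $r\ge\epsilon$, and hence $K\subseteq A\cap B$. Let $\pi_K$ be the nearest-point projection onto $K$, and take the homotopy $H(y,s)=(1-s)y+s\,\pi_K(y)$. The key claim is that if $y\in B(x,r)$ with $x\in K$, then the whole segment $[y,\pi_K(y)]$ lies in $B(x,r)$. This uses the obtuse-angle property of projection onto a convex set, $\langle y-\pi_K(y),\,x-\pi_K(y)\rangle\le0$, which gives $\|\pi_K(y)-x\|\le\|y-x\|\le r$; convexity of the ball then contains the segment. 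Consequently $H$ keeps each point inside every ball that contains it. It therefore preserves $A$, $B$, and $A\cap B$ simultaneously, fixes $K$ pointwise, and is continuous because $\pi_K$ is $1$-Lipschitz. So $H$ is a strong deformation retraction of all three sets onto $K$. The inclusions $A\cap B\hookrightarrow A$ and $A\cap B\hookrightarrow B$ are then homotopy equivalences, since each set retracts onto $K$ compatibly. This places us in case (ii), and the corollary follows.
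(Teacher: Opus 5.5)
Your proposal is correct and follows the paper's own route: both equivalences are read off Proposition~\ref{prop:mv_betti} together with exactness at $H_q(A\cap B)$, and the convex example uses the same straight-line homotopy toward $\pi_K$. You also supply two details the paper only asserts: the obtuse-angle inequality $\|\pi_K(y)-x\|\le\|y-x\|$, which justifies that the homotopy stays in every ball, and the two-out-of-three argument showing that the inclusions $A\cap B\hookrightarrow A$ and $A\cap B\hookrightarrow B$ are homotopy equivalences.
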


When $\Delta\beta_q > 0$ the overlap carries at least $\Delta\beta_q$ independent $q$-dimensional features, and the degree is informative: $\Delta\beta_0 > 0$ signals a fragmented (multi-component) overlap, $\Delta\beta_1 > 0$ a cycle threading the overlap. The connecting-homomorphism terms $\delta_q$ capture features of the overlap that are invisible to either union alone---they are exactly the ``pure interaction'' the alternating sum~\eqref{eq:betti_decomp} conceals, and they are computable from the same Alpha-complex data (the homology of $A$, $B$, $A \cap B$ with the inclusion maps) at no asymptotic overhead beyond the profile itself.

{ It is instructive to recall the connecting map $\partial_{q+1}$: each class $\zeta \in H_{q+1}(A \cup B)$ is represented by a $(q+1)$-cycle $z = c_A + c_B$ with $c_A, c_B$ chains in $A$ and $B$; then $\partial_{q+1}(\zeta)$ is represented by the $q$-cycle $\partial c_A = -\partial c_B$, which lives in $A \cap B$. So $\delta_{q+1} = \rank \partial_{q+1}$ counts the classes of $A \cup B$ whose ``boundary'' is a nontrivial cycle of the overlap---precisely the hidden interaction.}

\begin{example}[Two rings meeting at two spots]\label{ex:linked}
Let $A, B \subset \R^2$ be two circles that cross at exactly two points---for instance the circles of radius one centered at $(\pm\tfrac12, 0)$, which meet at $(0, \pm\tfrac{\sqrt3}{2})$ (Figure~\ref{fig:linked_rings}(a)).
Sample $A, B$ densely by disjoint $X \subset A$, $Y \subset B$ (avoiding the two crossings), and thicken each into a planar ring---the annulus $A' := \mathcal{U}(X; r)$ and likewise $B'$, which for $r < 1$ keeps its central hole---at a scale $r$ just past the first tube contact, small enough that the two meet only near the two crossings.
Then the overlap $A' \cap B'$ is two separate lenses, one at each crossing, each with no hole of its own, so $\beta_0(A' \cap B') = 2$ and $\beta_q(A' \cap B') = 0$ for $q \geq 1$; hence $\Delta\chi = \chi(A' \cap B') = 2$, a plain count of overlap pieces.
The decomposition~\eqref{eq:mv_betti} explains where the two pieces come from: since $A' \cup B'$ is connected, $\Delta\beta_0 = 1 + 1 - 1 = 1$, so $\beta_0(A' \cap B') = \Delta\beta_0 + \delta_1 = 1 + 1$---{one piece is the plain overlap, and the other is the \emph{hidden} piece supplied by the connecting map $\partial_1 : H_1(A' \cup B') \to H_0(A' \cap B')$ ($\delta_1 = \rank \partial_1 = 1$): the loop of the combined shape $A' \cup B'$---followed from one overlap lens through $A'$ to the other and back through $B'$---that removing the overlap breaks.
The same three numbers arise however two rings meet at two spots ($\beta_0(A' \cap B') = 2$, $\delta_1 = 1$, $\Delta\chi = 2$), so what the hidden piece records is this broken loop, not how the rings sit.}
\end{example}

\begin{figure}[t]
\centering
\includegraphics[width=\textwidth]{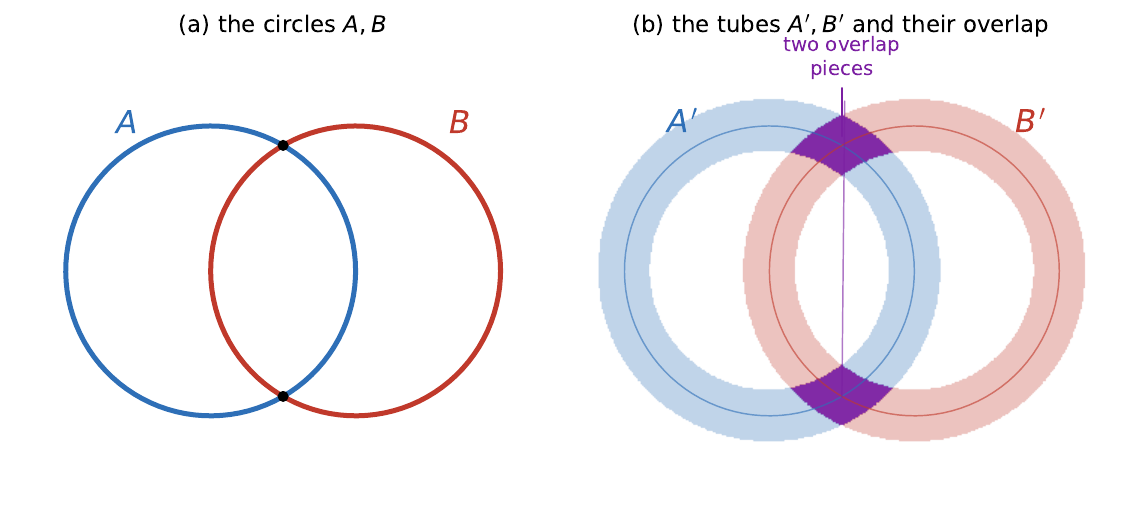}
\caption{The two rings of Example~\ref{ex:linked}, in the plane. \textbf{(a)} The circles $A$ (blue) and $B$ (red) crossing at two points. \textbf{(b)} Their thickenings to annuli $A', B'$: the overlap $A' \cap B'$ (violet) is two contractible lenses at the crossings, so $\Delta\chi = 2$. One lens is the plain overlap; the other is the hidden piece forced by the loop of $A' \cup B'$ that removing the overlap breaks.}
\label{fig:linked_rings}
\end{figure}

\subsection{Geometric Discrimination}\label{sec:discrimination}

We now identify what $\Delta\chi$ computes for dense samples of fixed geometry---the semantic content behind the peak statistic. This is the finite-sample, reach-based counterpart of the qualitative sampling consistency of Section~\ref{sec:consistency_qualitative}; the quantitative (rate/reach) form is Theorem~\ref{thm:quantitative}.

The governing principle is that once the samples are dense relative to the reach of the underlying sets, $\Delta\chi$ reports the topology of those sets rather than of the finite clouds, and thereby \emph{discriminates} between geometric configurations. The same-geometry case, in which every cloud samples one common set, is the special case recorded as part~(3) below.
Throughout, the offset notation extends verbatim to an arbitrary compact center set: $\mathcal{U}(A; \rho) = \bigcup_{a \in A} B(a, \rho) = \{x : d(x, A) \leq \rho\}$ for compact $A \subset \R^d$, agreeing with the finite-cloud definition. { For compact subsets $A_1, \ldots, A_k \subset \R^d$ and point clouds $X_i \subseteq A_i$ with $d_H(X_i, A_i) \leq \epsilon_i$, set $\epsilon = \max_i \epsilon_i$ and write
\[
    T_\rho = \bigcap_i \mathcal{U}(A_i; \rho), \qquad M_\rho = \bigcap_i \mathcal{U}(X_i; \rho),
\]
for the population and sample overlaps respectively. Throughout this section we assume $H_*(T_\rho)$ is finite-dimensional at each scale $\rho$ under consideration.  Call $r$ a \emph{regular scale} if $\rho \mapsto T_\rho$ has no homological critical value in $[r - \epsilon,\, r]$ and $\rho \mapsto M_\rho$ has none in $[r,\, r + \epsilon]$---each filtration constrained only on the side the proof uses.}

\begin{theorem}[Geometric discrimination]\label{thm:discrimination}
{ With $A_i$, $X_i$, $T_\rho$, $M_\rho$, $\epsilon$, and regular scales as above:}
\begin{enumerate}
    \item \textbf{below the contact scale}: whenever $2r < d_{\min}(A_i, A_j)$ for some pair $i \neq j$ (the minimum distance of Lemma~\ref{lem:separation}, attained by compactness), $\Delta\chi(r) = 0$; this uses only $X_i \subseteq A_i$---no window restriction on $r$, and no density or regularity hypothesis;
    \item \textbf{general}: for $r \geq \epsilon$ at a regular scale $r$, $\ \Delta\chi(r) = \chi(T_r) = \chi\big(\bigcap_i \mathcal{U}(A_i; r)\big)$;
    \item \textbf{same geometry}: if $A_1 = \cdots = A_k = A$ with positive reach $\tau > 0$, then for $\epsilon < r < \tau - \epsilon$ at a regular scale $r$, $\Delta\chi(r) = \chi(A)$.
\end{enumerate}
\end{theorem}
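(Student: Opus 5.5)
Part~(1) needs only monotonicity of offsets. Since $X_i \subseteq A_i$, we have $\mathcal{U}(X_i;r) \subseteq \mathcal{U}(A_i;r)$. If $2r < d_{\min}(A_i,A_j)$, then $\mathcal{U}(A_i;r)\cap\mathcal{U}(A_j;r)=\emptyset$ by the triangle inequality, exactly as in Lemma~\ref{lem:separation}. Hence the sample overlap $M_r$ is empty and $\Delta\chi(r)=\chi(\emptyset)=0$ by Theorem~\ref{thm:intersection}. No density or regularity enters.

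For part~(2) I will build a sandwich between the two filtrations. Because $X_i\subseteq A_i$ and $d_H(X_i,A_i)\le\epsilon$, every point of $\mathcal{U}(A_i;\rho-\epsilon)$ lies within $\rho$ of $X_i$. This gives $\mathcal{U}(A_i;\rho-\epsilon)\subseteq\mathcal{U}(X_i;\rho)\subseteq\mathcal{U}(A_i;\rho)$, and intersecting over $i$ yields
\[
T_{r-\epsilon}\subseteq M_r\subseteq T_r\subseteq M_{r+\epsilon}.
\]
Regularity makes the outer composites isomorphisms on $H_*(-;\mathbb{F})$. The composite $T_{r-\epsilon}\to T_r$ is an isomorphism because $T$ has no critical value in $[r-\epsilon,r]$. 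The composite $M_r\to M_{r+\epsilon}$ is an isomorphism because $M$ has none in $[r,r+\epsilon]$.

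From these two facts the standard interleaving argument (as in the proof of Lemma~\ref{lem:scale_equivariance}) gives $H_*(M_r)\cong H_*(T_r)$. Concretely, write the four maps as $a,b,c$ in the chain $T_{r-\epsilon}\xrightarrow{a}M_r\xrightarrow{b}T_r\xrightarrow{c}M_{r+\epsilon}$. From $ba$ iso, $b$ is surjective; from $cb$ iso, $b$ is injective. So $b$ is an isomorphism.

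The Betti numbers are finite because $H_*(T_r)$ is assumed finite-dimensional, and for $M_r$ finiteness follows from the nerve of Lemma~\ref{lem:cell_decomp}. Taking alternating sums together with Theorem~\ref{thm:intersection} then gives $\Delta\chi(r)=\chi(M_r)=\chi(T_r)$. The main subtlety is only bookkeeping: checking that the one-sided regularity windows are exactly the ones the chain uses.

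For part~(3), with all $A_i=A$, the population overlap is $T_\rho=\mathcal{U}(A;\rho)$, so by part~(2) it remains to show $\chi(\mathcal{U}(A;r))=\chi(A)$ for $r<\tau$. For $0<\rho<\tau$ the nearest-point projection $\pi_A$ is well defined and continuous on $\mathcal{U}(A;\rho)$. The straight-line homotopy $x\mapsto (1-s)x+s\,\pi_A(x)$ stays in $\mathcal{U}(A;\rho)$, since distance to $A$ decreases along the segment. This is the standard Federer/Niyogi--Smale--Weinberger fact, and it makes $\mathcal{U}(A;r)$ deformation-retract onto $A$.

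This step is where I expect the real care to be needed. It must be justified that $\chi(A)$ is defined, which holds because $A$ is then homotopy equivalent to a compact semi-algebraic offset with finite homology. It must also be justified that the hypotheses of part~(2) hold on the stated range. The upper bound $r<\tau-\epsilon$ guarantees the window $[r-\epsilon,r+\epsilon]$ stays below $\tau$, where the population filtration $\rho\mapsto\mathcal{U}(A;\rho)$ has no critical values at all. With that, $\Delta\chi(r)=\chi(A)$.
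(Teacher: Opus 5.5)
Your proposal is correct and follows the paper's proof essentially step for step. It uses the same sandwich $T_{r-\epsilon}\subseteq M_r\subseteq T_r\subseteq M_{r+\epsilon}$, reads off surjectivity and injectivity of $H_*(M_r)\to H_*(T_r)$ from the two one-sided regularity windows, and handles part~(3) by the nearest-point deformation retraction of $\mathcal{U}(A;\rho)$ onto $A$ for $\rho<\tau$ followed by part~(2).

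One small slip: $\mathcal{U}(A;r)$ need not be semi-algebraic. You don't need that, because $H_*(A)\cong H_*(T_r)\cong H_*(M_r)$ is finite-dimensional by the section's standing hypothesis, or because $M_r$ is a finite ball union.
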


\begin{proof}
Since $X_i \subseteq A_i$, every $x \in \mathcal{U}(X_i; \rho)$ has $d(x, A_i) \leq \rho$, so $\mathcal{U}(X_i; \rho) \subseteq \mathcal{U}(A_i; \rho)$. Conversely, if $d(x, A_i) \leq \rho - \epsilon$ choose $a \in A_i$ with $\|x - a\| \leq \rho - \epsilon$ and, by $\epsilon$-density, $p \in X_i$ with $\|a - p\| \leq \epsilon$; then $\|x - p\| \leq \rho$, so $x \in \mathcal{U}(X_i; \rho)$. Intersecting over $i$,
\[
    T_{\rho - \epsilon} \;\subseteq\; M_\rho \;\subseteq\; T_\rho \qquad \text{for every } \rho,
\]
so the filtrations $M$ and $T$ are $\epsilon$-interleaved.

(1) A point $x$ with $d(x, A_i) \leq r$ and $d(x, A_j) \leq r$ would give $d_{\min}(A_i, A_j) \leq 2r$; so under the hypothesis $\mathcal{U}(A_i; r) \cap \mathcal{U}(A_j; r) = \emptyset$, whence $M_r \subseteq T_r = \emptyset$ (only $X_i \subseteq A_i$ used) and $\Delta\chi(r) = \chi(\emptyset) = 0$.

(2) The inclusions $T_{r-\epsilon} \subseteq M_r \subseteq T_r$ factor the map $H_*(T_{r-\epsilon}) \to H_*(T_r)$ through $H_*(M_r)$, and $M_r \subseteq T_r \subseteq M_{r+\epsilon}$ factors $H_*(M_r) \to H_*(M_{r+\epsilon})$ through $H_*(T_r)$. Regularity of $T$ on $[r-\epsilon,\, r]$ makes the first map an isomorphism, hence $H_*(M_r) \to H_*(T_r)$ is surjective; regularity of $M$ on $[r,\, r+\epsilon]$ makes the second an isomorphism, hence $H_*(M_r) \to H_*(T_r)$ is injective. So $H_*(M_r) \cong H_*(T_r)$; {both are finite-dimensional (the $H_*(T_\rho)$ by the standing hypothesis of this section, $H_*(M_r)$ automatically as a finite ball-union),} so their Euler characteristics agree and $\Delta\chi(r) = \chi(M_r) = \chi(T_r)$ by the Intersection Theorem (Theorem~\ref{thm:intersection}).

(3) With all $A_i = A$ we have $T_\rho = \mathcal{U}(A; \rho)$, and for $\rho < \tau$ the nearest-point projection $\pi_A$ deformation-retracts $\mathcal{U}(A; \rho)$ onto $A$ along segments on which $d(\cdot, A)$ is non-increasing; hence $T$ has no critical value below $\tau$ and $\chi(T_r) = \chi(A)$. Now apply~(2).
\end{proof}

Regularity is genuinely a \emph{joint} hypothesis: the interleaving ties every $M$-bar of persistence $> 2\epsilon$ to the critical values of $T$ (Theorem~\ref{thm:algebraic_stability}(1)), but $M$ may in addition carry ephemeral bars---persistence at most $2\epsilon$, sampling artifacts---at scales unrelated to $\mathrm{Crit}(T)$, and at such a scale $\Delta\chi(r) = \chi(M_r)$ reports the artifact, not the geometry. Whether these artifact scales vanish in the dense-sample limit is the regularity question of Section~\ref{sec:consistency}, whose sampling statements therefore carry the $M$-regularity event explicitly.

Theorem~\ref{thm:discrimination} shows the Intersection ECP \emph{discriminates geometric configurations}: when the clouds are drawn from distinct geometries $A_i$, it reads $\chi\big(\bigcap_i \mathcal{U}(A_i; r)\big)$, which generically differs from the value $\chi(A)$ {it would return in the coincident case $A_1 = \cdots = A_k = A$ of part~(3)},
and the statistic $\max_r |\Delta\chi(r)|$ captures the discrepancy. Its magnitude reflects the topological relationship of the $A_i$: any well-separated pair forces zero interaction at moderate scales, while overlapping geometries encode $\chi\big(\bigcap_i \mathcal{U}(A_i; r)\big)$.

\subsection{Sharp Geometric Bounds}\label{sec:geometric_bounds}

The elementary properties of Section~\ref{sec:separation} are cardinality-based; in the reach-controlled setting of Theorem~\ref{thm:discrimination} one can bound $|\Delta\chi|$ by intrinsic geometry of the overlap instead. Write the \emph{interaction zone} of the $i$-th set at scale $r$ as $Z_i(r) = A_i \cap \bigcap_{j \neq i} \mathcal{U}(A_j; 2r)$; the zones grow with $r$.

{\emph{What these bounds are, and are not.} They bound the \emph{size} of the interaction---the overlap's component count $\beta_0(T_r)$, and $|\Delta\chi|$ when those components are acyclic---by the covering complexity of the interaction zones, and are not a route to computing $\Delta\chi$, which is always the Alpha-complex sweep of Section~\ref{sec:algorithm}, never a volume or a covering number. Two regimes are worth separating. For a fixed set of positive reach $|\Delta\chi(r)|$ is bounded, and as $r \to 0$ it stabilizes to $\chi$ of the underlying overlap rather than growing (Theorem~\ref{thm:discrimination}); here the bounds simply cap the size in terms of the zone geometry. The interesting regime is \emph{rough} zones, where the component count $\beta_0(T_r)$ can diverge as $r \to 0$ at a rate keyed to the box dimension of the zones---sharp as a rate (Proposition~\ref{prop:fractal_lower}) and confirmed directly in Figure~\ref{fig:fractal_bound}. In neither case is the right-hand side a quantity one evaluates in practice (the zones $Z_i(r)$ are themselves interaction-determined); what the bounds deliver is the intrinsic-dimension content of the interaction and, in the rough regime, its growth rate---a question the barcode invariants leave untouched.}

Recall that for a compact subset $S \subset \R^d$ and $\rho > 0$, the \emph{$\rho$-covering number} $\mathcal{N}(S, \rho)$ is the least number of balls of radius $\rho$ \emph{centered in $S$} whose union contains $S$.

\begin{theorem}[Geometric bounds on the Intersection ECP]\label{thm:geometric_bounds}
Let $A_1, \ldots, A_k \subset \R^d$ be compact and let $X_i \subseteq A_i$ be $\epsilon$-dense. For every regular scale $r > \epsilon$ in the sense of Theorem~\ref{thm:discrimination}:
\begin{enumerate}
    \item \textbf{Betti bound}: $\displaystyle |\Delta\chi(r)| \leq \sum_{q=0}^{d} \beta_q\Big(\bigcap_i \mathcal{U}(A_i; r)\Big)$;
    \item \textbf{component bound}: write $T_\rho = \bigcap_i \mathcal{U}(A_i; \rho)$ as in Theorem~\ref{thm:discrimination}. If the map $H_0(T_r) \to H_0(T_{3r})$ induced by inclusion is injective (no two components of the overlap merge before the scale triples), then
    \[
        \beta_0(T_r) \;\leq\; {\min_{i=1,\ldots,k}} \mathcal{N}(Z_i(r),\, r),
    \]
    where $\mathcal{N}(Z_i(r), r)$ is the $r$-covering number of the interaction zone $Z_i(r)$. A single zone already covers the whole overlap, so the smallest of the $k$ covering numbers governs. In particular, whenever every component of $T_r$ is acyclic---e.g.\ a disjoint union of convex cells---$|\Delta\chi(r)| \leq \min_i \mathcal{N}(Z_i(r), r)$, with no dimensional constant.
\end{enumerate}
\end{theorem}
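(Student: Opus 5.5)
Both parts rest on Theorem~\ref{thm:discrimination}(2). At a regular scale $r > \epsilon$ it gives $H_*(M_r) \cong H_*(T_r)$, hence $\Delta\chi(r) = \chi(T_r)$. The Betti bound~(1) is then immediate. Write $\chi(T_r) = \sum_q (-1)^q \beta_q(T_r)$ and apply the triangle inequality to get $|\Delta\chi(r)| \le \sum_q \beta_q(T_r)$. The sum stops at $q = d$ because $T_r$ is a compact subset of $\R^d$; more concretely, $M_r \simeq \mathcal{N}(\mathcal{B}_{\mathcal{X}})$ by Lemma~\ref{lem:cell_decomp}, and its homology agrees with that of $T_r$, which vanishes above degree $d$ for a closed subset of $\R^d$. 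Finite-dimensionality is the standing hypothesis of Section~\ref{sec:discrimination}.

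For the component bound~(2), fix $i$ and take a minimal $r$-cover of $Z_i(r)$ by balls $B(z_1, r), \ldots, B(z_N, r)$ with $z_m \in Z_i(r)$ and $N = \mathcal{N}(Z_i(r), r)$. The plan is to build a map from components of $T_r$ to cover indices and show it is injective.

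\emph{Step 1: each component reaches the zone.} Let $x \in T_r$. Then $d(x, A_i) \le r$, so there is $a \in A_i$ with $\|x - a\| \le r$. For each $j \ne i$ we have $d(x, A_j) \le r$, so $d(a, A_j) \le 2r$, hence $a \in Z_i(r)$. This $a$ lies in some $B(z_m, r)$, giving $\|x - z_m\| \le 2r$. Assign to each component $K$ of $T_r$ one index $m(K)$ obtained in this way from a point of $K$.

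\emph{Step 2: the assignment is injective.} Suppose $m(K) = m(K') = m$, so points $x \in K$ and $x' \in K'$ both lie within $2r$ of $z_m$. The point $z_m$ lies in $Z_i(r)$, so $d(z_m, A_i) = 0$ and $d(z_m, A_j) \le 2r$ for $j \ne i$; thus $z_m \in T_{2r} \subseteq T_{3r}$. The segments $[x, z_m]$ and $[x', z_m]$ should be shown to lie in $T_{3r}$. Every point $y$ on them has $\|y - x\| \le 2r$, so only $d(y, A_j) \le r + 2r = 3r$ is immediate. Using the convex combination with $z_m$ instead, $d(y, A_j) \le \max\bigl(d(x, A_j), d(z_m, A_j)\bigr) + \text{(correction)}$. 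The cleanest route is the crude Lipschitz estimate $d(y, A_j) \le d(x, A_j) + \|y - x\| \le 3r$. This puts $x$ and $x'$ in the same component of $T_{3r}$, and the assumed injectivity of $H_0(T_r) \to H_0(T_{3r})$ then forces $K = K'$. Hence $\beta_0(T_r) \le N$, and minimizing over $i$ gives the stated bound. When every component of $T_r$ is acyclic, $\chi(T_r) = \beta_0(T_r) \ge 0$, so $|\Delta\chi(r)| = \beta_0(T_r) \le \min_i \mathcal{N}(Z_i(r), r)$.

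The main obstacle is Step~2: checking that the connecting paths stay inside $T_{3r}$, since that is exactly what calibrates the factor $3$. The Lipschitz bound $d(\cdot, A_j) \le d(x, A_j) + \|\cdot - x\|$ along a segment of length at most $2r$ starting in $T_r$ gives exactly $3r$, so this is the estimate I would carry through. The one thing still to verify is the choice of the point $x$: the witness $a$ must come from $x$ itself, so that the distance $\|x - z_m\|$ really is at most $2r$.
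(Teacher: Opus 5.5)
Your proof is correct and is essentially the paper's: (1) is $\Delta\chi(r)=\chi(T_r)$ from Theorem~\ref{thm:discrimination}(2) plus the triangle inequality, and (2) uses the same covering of $T_r$ by $2r$-balls about the centers of a minimal $r$-cover of $Z_i(r)$. The only variation is that you join two components meeting a common ball through its center $z_m$ inside $T_{3r}$, while the paper runs the same Lipschitz estimate for $d(\cdot,A_j)$ along the segment between the two components to show they are more than $4r$ apart. One small correction concerns your degree cutoff in (1): singular homology of a compact subset of $\R^d$ need not vanish above degree $d$ (Barratt--Milnor), and the nerve $\mathcal{N}(\mathcal{B}_{\mathcal{X}})$ may be high-dimensional, so the vanishing of $\beta_q(T_r)$ for $q>d$ should instead come from $H_*(T_r)\cong H_*(M_r)$, with $M_r$ a finite union of compact convex sets and hence a compact semi-algebraic set triangulable in dimension at most $d$.
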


\begin{proof}
(1) By Theorem~\ref{thm:discrimination}, $\Delta\chi(r) = \chi\big(\bigcap_i \mathcal{U}(A_i; r)\big)$ at such $r$, and $|\chi(S)| = |\sum_q (-1)^q \beta_q(S)| \leq \sum_q \beta_q(S)$.

(2) \emph{Covering.} {Fix any single index $i$. Any $x \in T_r$ has a nearest point $a_i \in A_i$ with $\|x - a_i\| \leq r$; since $x \in \mathcal{U}(A_j; r)$ for every $j$, also $d(a_i, A_j) \leq \|a_i - x\| + d(x, A_j) \leq 2r$ for $j \neq i$, so $a_i \in Z_i(r)$. If $c$ is a center of a minimal $r$-covering of $Z_i(r)$ with $\|a_i - c\| \leq r$, then $x \in B(c, 2r)$. Hence $T_r$ is covered by $\mathcal{N}(Z_i(r), r)$ balls of radius $2r$---and, since $i$ was arbitrary, by $\min_i \mathcal{N}(Z_i(r), r)$ such balls.}

\emph{Separation of components.} If $C_1, C_2$ are distinct components of $T_r$ at distance $\rho = d(C_1, C_2)$, realized by $x_1 \in C_1$, $x_2 \in C_2$, then every point $y$ of the segment $[x_1, x_2]$ satisfies $d(y, A_i) \leq r + \min(\|y - x_1\|, \|y - x_2\|) \leq r + \rho/2$ for every $i$, so $C_1$ and $C_2$ lie in one component of $T_{r + \rho/2}$. If $\rho \leq 4r$ they merge by scale $3r$, contradicting the injectivity of $H_0(T_r) \to H_0(T_{3r})$. Hence distinct components are more than $4r$ apart, so a ball of radius $2r$---of diameter $4r$---meets at most one component. Every component meets some ball of the covering, giving {$\beta_0(T_r) \leq \mathcal{N}(Z_i(r), r)$ for each $i$, hence $\beta_0(T_r) \leq \min_i \mathcal{N}(Z_i(r), r)$}.

For the last claim, acyclic components give $\sum_q \beta_q(T_r) = \beta_0(T_r)$, and (1) applies.
\end{proof}

Both hypotheses in bound~(2) are essential, and the bound is sharp. Without no-early-merging it genuinely fails---two smooth curves at distance $2r-\eta$ ($\eta \ll r$) meet in ephemeral pockets spaced $\sqrt{r\eta} \ll r$ apart, outrunning any bound linear in the covering numbers as $\eta \to 0^+$, and those pockets merge just above $r$, which the hypothesis excludes. With it, the linear dependence cannot be improved: for any $N$, choose sites $p_1, \ldots, p_N$ spaced $8r$ apart on a line and place one point of each cloud inside each ball $B(p_\ell, r/2)$. This yields $N$ disjoint convex overlap cells, so $\Delta\chi(r) = N$ while $\mathcal{N}(Z_i(r), r) = N$ for every $i$ (and components lie $\geq 5r$ apart, merging only above $3.5r$, so the hypothesis holds).

Two sharpenings make the component bound's geometric content explicit: a volume estimate for smooth zones (Remark~\ref{rem:volume_bound}), and a fractal growth rate as $r \to 0^+$ that is matched from below.

{
\begin{remark}[Volume bound for smooth zones]\label{rem:volume_bound}
When each $A_i$ is a closed (compact, without boundary) smooth $k_i$-dimensional submanifold of $\R^d$ with reach $\tau_i$ and $X_i \subset A_i$ is $\epsilon$-dense ($\epsilon < \tau/2$, $\tau = \min_i \tau_i$), a packing argument makes the covering number in Theorem~\ref{thm:geometric_bounds}(2) explicit in the smooth case: for $r \in (\epsilon, \tau/2)$ there are dimensional constants $c_m > 0$ with
\[
    \mathcal{N}(Z_i(r), r) \;\leq\; c_{k_i}\, \frac{\mathrm{vol}_{k_i}\big(A_i \cap \mathcal{U}(Z_i(r); r)\big)}{r^{k_i}}
    \;\leq\; c_{k_i}\, \frac{\mathrm{vol}_{k_i}(A_i)}{r^{k_i}}.
\]
Indeed, a maximal $r$-separated subset $P \subseteq Z_i(r)$ is an $r$-covering, so $\mathcal{N}(Z_i(r), r) \leq |P|$; the disjoint half-balls $B(p, r/2)$, $p \in P$, cut caps $B(p, r/2) \cap A_i \subseteq A_i \cap \mathcal{U}(Z_i(r); r)$ of volume $\geq c'_{k_i}\, r^{k_i}$ (for $r/2 < \tau_i/2$, a dimensional constant, \citealp[Lemma~5.3]{niyogi2008finding}, the curvature correction keeping $c'_{k_i} < \omega_{k_i}(1/2)^{k_i}$), whence $|P|\,c'_{k_i} r^{k_i} \leq \mathrm{vol}_{k_i}(A_i \cap \mathcal{U}(Z_i(r); r))$ with $c_{k_i} = 1/c'_{k_i}$; the second inequality is monotonicity. Substituted into Theorem~\ref{thm:geometric_bounds}(2), this fixes the smooth covering exponent as the integer dimension $k_i$.
\end{remark}
}

{As stressed above, the estimate is asymptotic in $r$ and valid only inside the window $r \in (\epsilon, \tau/2)$: for \emph{large} $r$ the no-early-merging hypothesis of Theorem~\ref{thm:geometric_bounds}(2) can fail and the bound need not hold, while at a fixed \emph{small} $r$ it is deliberately loose---the right-hand side $\sim r^{-k_i}$ can dwarf $|\Delta\chi(r)|$, which a finite sample caps by its overlap-cell count.

{Remark~\ref{rem:volume_bound}} fixes the rate for \emph{smooth} zones, where the covering exponent is the integer dimension $k_i$. The same packing argument extends to rough zones once the box-counting dimension replaces $k_i$, giving a growth rate keyed to fractal dimension.

\begin{corollary}[Fractal growth rate]\label{cor:fractal_bound}
Fix a reference scale $r_0$ and let $s = \max_i \overline{\dim}_B Z_i(r_0)$ be the largest \emph{upper} box-counting dimension of the reference zones. Then for every $\eta > 0$ there is $C_\eta$ with
\[
    \beta_0(T_r) \;\leq\; C_\eta\, r^{-s-\eta}
\]
at every scale $r \leq r_0$ satisfying the no-early-merging hypothesis of Theorem~\ref{thm:geometric_bounds}(2); when in addition the components of $T_r$ are acyclic, $|\Delta\chi(r)| \leq C_\eta\, r^{-s-\eta}$. For smooth zones $s = \max_i k_i$ and the loss $\eta$ can be dropped, recovering the rate $r^{-\max_i k_i}$ of Remark~\ref{rem:volume_bound}.
\end{corollary}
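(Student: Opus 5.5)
The bound follows by feeding a box-counting estimate into the component bound of Theorem~\ref{thm:geometric_bounds}(2). At any scale $r \le r_0$ satisfying the no-early-merging hypothesis, that theorem gives $\beta_0(T_r) \le \min_i \mathcal{N}(Z_i(r), r) \le \mathcal{N}(Z_{i}(r), r)$ for each $i$. So it suffices to bound one covering number of the current zone $Z_i(r)$ by a power of $r$ whose exponent is controlled by the reference zone $Z_i(r_0)$.

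\textbf{Step 1 (monotonicity of zones).} For $r \le r_0$ we have $\mathcal{U}(A_j; 2r) \subseteq \mathcal{U}(A_j; 2r_0)$. Hence $Z_i(r) \subseteq Z_i(r_0)$, since the zones grow with $r$.

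\textbf{Step 2 (covering numbers under inclusion).} The paper's covering numbers require centers in the set itself, so they are not monotone under inclusion. I would handle this with the standard comparison $\mathcal{N}(S', \rho) \le \mathcal{N}(S, \rho/2)$ for $S' \subseteq S$. Take a $\rho/2$-cover of $S$ with centers in $S$. For each ball meeting $S'$, pick a point of $S'$ in it; the $\rho$-ball about that point contains the original $\rho/2$-ball. This gives $\mathcal{N}(Z_i(r), r) \le \mathcal{N}(Z_i(r_0), r/2)$.

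\textbf{Step 3 (box dimension).} Set $s_i = \overline{\dim}_B Z_i(r_0) \le s$. By the definition of upper box dimension as $\limsup_{\rho\to 0} \log \mathcal{N}(\cdot,\rho)/\log(1/\rho)$, for every $\eta > 0$ there is $\rho_\eta > 0$ with $\mathcal{N}(Z_i(r_0), \rho) \le \rho^{-s_i-\eta}$ for all $\rho < \rho_\eta$. The centered and uncentered covering numbers agree up to the factor-$2$ rescaling of Step 2, and that rescaling does not affect the dimension.

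\textbf{Step 4 (the intermediate range).} For $\rho_\eta \le r/2$ with $r \le r_0$, the covering number is bounded by the finite constant $\mathcal{N}(Z_i(r_0), \rho_\eta/2)$, since $Z_i(r_0)$ is compact. This range of $r$ is bounded away from $0$, so the constant is absorbed into $C_\eta r^{-s-\eta}$. Combining the two ranges gives $\beta_0(T_r) \le C_\eta r^{-s-\eta}$ with $C_\eta = 2^{s+\eta}\max(1, \cdot)$, taken over the finitely many indices $i$. If every component of $T_r$ is acyclic, the last clause of Theorem~\ref{thm:geometric_bounds}(2) gives $|\Delta\chi(r)| = \beta_0(T_r)$, so the same bound holds for $|\Delta\chi(r)|$.

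\textbf{Smooth case.} Here I would not use the box-dimension limsup. Instead, apply Remark~\ref{rem:volume_bound} directly: $\mathcal{N}(Z_i(r), r) \le c_{k_i}\mathrm{vol}_{k_i}(A_i) r^{-k_i}$ for $r$ in its window $(\epsilon, \tau/2)$. Over the remaining range up to $r_0$, use a constant as in Step 4. This removes $\eta$ and gives the rate $r^{-\max_i k_i}$.

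\textbf{Main obstacle.} The delicate point is the bookkeeping that keeps the exponent tied to the \emph{reference} zone $Z_i(r_0)$ rather than the moving zone $Z_i(r)$. This rests on Step 2's centered-versus-uncentered comparison, since the paper's definition forbids naive monotonicity. A secondary issue is that Remark~\ref{rem:volume_bound} only holds above $\epsilon$. For the smooth clause I would therefore read the statement as about the population zones $A_i$ and cite the volume bound for all $r<\tau/2$, where the packing argument does not use $\epsilon$.
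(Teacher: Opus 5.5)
Your proposal is correct and follows the paper's proof essentially step for step. The shared steps are:
\begin{itemize}
\item monotonicity of the zones;
\item a comparison that gets around the centered-in-set covering numbers;
\item the upper-box-dimension threshold for small $r$, with a constant on the remaining bounded range up to $r_0$;
\item substitution into Theorem~\ref{thm:geometric_bounds}(2);
\item the smooth case read off from Remark~\ref{rem:volume_bound}.
\end{itemize}
The only difference is a minor technical variant. You handle the centering issue by halving the radius, $\mathcal{N}(S',\rho)\le\mathcal{N}(S,\rho/2)$ for $S'\subseteq S$, which costs an explicit dimension-free factor $2^{s+\eta}$. The paper instead compares centered and unrestricted covering numbers up to a dimensional constant.
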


\begin{proof}
The zones are monotone in the scale, $Z_i(r) \subseteq Z_i(r_0)$ for $r \leq r_0$; since our covering number counts balls \emph{centered in the set}, a minimal covering of the larger $Z_i(r_0)$ need not place its centers in the smaller $Z_i(r)$, but the centered-in-set and unrestricted covering numbers differ by at most a dimensional constant and share the same box dimension, so up to that constant $\mathcal{N}(Z_i(r), r) \leq \mathcal{N}(Z_i(r_0), r)$.

{Fix $\eta > 0$. The upper box dimension is $\overline{\dim}_B Z_i(r_0) = \limsup_{\rho \to 0^+} \log \mathcal{N}(Z_i(r_0), \rho)/\log(1/\rho) \leq s$, so there is a threshold $r_\eta \in (0, r_0]$---depending on $\eta$---with $\mathcal{N}(Z_i(r_0), r) \leq r^{-s-\eta}$ for all $r < r_\eta$ and every $i$. On the complementary bounded range $r \in [r_\eta, r_0]$ the covering number is finite and monotone, at most $\mathcal{N}(Z_i(r_0), r_\eta)$, while $r^{-s-\eta} \geq r_0^{-s-\eta}$; hence, defining
\[
    C_\eta \;=\; \max\!\Big(1,\ \textstyle\max_i \mathcal{N}(Z_i(r_0), r_\eta)\cdot r_0^{\,s+\eta}\Big)
\]
(and enlarging it once more to absorb the dimensional constant above) gives $\mathcal{N}(Z_i(r), r) \leq C_\eta\, r^{-s-\eta}$ for \emph{all} $r \leq r_0$ and every $i$. Substituting into the component bound of Theorem~\ref{thm:geometric_bounds}(2), $\beta_0(T_r) \leq \min_i \mathcal{N}(Z_i(r), r) \leq C_\eta\, r^{-s-\eta}$, and, when the components of $T_r$ are acyclic, $|\Delta\chi(r)| \leq C_\eta\, r^{-s-\eta}$. In the smooth case a $k_i$-dimensional piece has box dimension exactly $k_i$ with $\mathcal{N}(Z_i(r), r) = \Theta(r^{-k_i})$, so the loss $\eta$ drops and Remark~\ref{rem:volume_bound} gives the exact power.}
\end{proof}

Corollary~\ref{cor:fractal_bound} is only a \emph{ceiling}: it bounds the growth by $r^{-s}$ but leaves open whether any geometry forces that rate or the estimate is merely loose. The next proposition supplies the matching \emph{floor}, exhibiting sets whose overlap component count---and hence, its components being acyclic, $\Delta\chi$ itself---grows \emph{exactly} at the box-dimension rate along a sequence of scales.

\begin{proposition}[Fractal lower bound]\label{prop:fractal_lower}
For every $\alpha \in (0, d)$ there exist a compact set $E \subset \R^d$ with $\dim_B E = \alpha$, a constant $c > 0$, and a geometric sequence of scales $r_n \to 0^+$ such that, with both clouds sampling $E$ (so $Z_i(r) = E$ for all small $r$): for every $n$ and all sufficiently dense disjoint samples $X, Y \subset E$,
\[
    \Delta\chi(r_n;\, X, Y) \;=\; \beta_0\big(\mathcal{U}(E; r_n)\big) \;\geq\; c\, r_n^{-\alpha}.
\]
Thus the rate of Corollary~\ref{cor:fractal_bound} is attained---without the $\eta$ loss---along a sequence of scales.
\end{proposition}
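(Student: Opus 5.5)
We build $E$ as a self-similar Cantor dust and read $\Delta\chi$ at scales tuned to its construction levels. Fix a contraction ratio $\lambda \in (0, 1/2)$ and an integer $m \ge 2$ with $\log m / \log(1/\lambda) = \alpha$. Such a pair exists for every $\alpha\in(0,d)$: let $\lambda$ be small and take $m=\lambda^{-\alpha}$. Irrational values of $\lambda^{-\alpha}$ are handled by choosing $\lambda=m^{-1/\alpha}$ for a large integer $m$. Since $\alpha<d$, for $m$ large the spacing needed to fit $m$ cubes of side $\lambda$ in $[0,1]^d$ with a gap $\geq g\lambda$ between them is available, for a fixed $g>0$.

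\textbf{Construction.} Let $E=\bigcap_n E_n$, where $E_0=[0,1]^d$ and $E_{n+1}$ replaces each level-$n$ cube of side $\lambda^n$ by $m$ subcubes of side $\lambda^{n+1}$. The subcubes are placed so that distinct children of the same parent are at distance $\geq g\lambda^{n+1}$, with $g$ large compared to $\sqrt d$. By the standard strong-separation argument, $E$ satisfies $\dim_B E=\alpha$. Write $Q$ for any level-$n$ cube.

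\textbf{Choice of scales and the component count.} Set $r_n = \theta\lambda^n$, with $\theta$ chosen so that two conditions hold. First, $\mathcal U(E\cap Q; r_n)$ is connected for each level-$n$ cube $Q$: at scale $r_n$, every gap between descendants of $Q$ is at most $g\lambda^{n+1}+\sqrt d\lambda^{n+1}$ across, so it is bridged once $2\theta\lambda^n$ exceeds that. Second, distinct level-$n$ pieces are at distance $\geq g\lambda^n > 2r_n$. Both hold if $(g+\sqrt d)\lambda<2\theta<g$, which is possible since $\lambda<1$. Under these conditions $\mathcal U(E;r_n)$ has exactly $m^n$ components, and $m^n = \lambda^{-n\alpha} = \theta^{\alpha} r_n^{-\alpha}$. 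This gives $c=\theta^\alpha$.

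\textbf{Passing to the samples.} By Theorem~\ref{thm:discrimination}(2) with $A_1=A_2=E$, we have $\Delta\chi(r_n)=\chi(\mathcal U(E;r_n))$ at regular scales. The main obstacle is the equality $\Delta\chi = \beta_0$: the components of $\mathcal U(E;r_n)$ must be acyclic, or at least each must have $\chi=1$. Connectedness alone does not give this, since a union of balls around a Cantor piece could enclose holes.

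To handle this, I would strengthen the choice of $\theta$ so that $\theta\lambda^n \geq \mathrm{diam}(E\cap Q)\le\sqrt d\lambda^n$ fails only mildly. Instead, take $\theta\ge\sqrt d$, so that each level-$n$ piece lies within a ball of radius $r_n$ about any of its points. Then $\mathcal U(E\cap Q;r_n)$ is star-shaped about any point of $E\cap Q$, by the argument of Lemma~\ref{cor:dead_zone}(3), hence contractible. This requires the separation $g>2\sqrt d$, which is compatible with the choice above after enlarging $g$; the constraint $m\lambda^{...}$ from fitting the cubes is absorbed by taking $m$ large.

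The same star-shape argument applies to the samples. Write $X_Q=X\cap Q$. For $X,Y$ dense enough, $\mathcal U(X_Q;r_n)\cap\mathcal U(Y_Q;r_n)$ is an intersection of two sets star-shaped about a common point of $E\cap Q$ near both clouds, hence contractible. Pieces from different $Q$ remain disjoint. Therefore $\Delta\chi(r_n;X,Y)=m^n$ directly, with no regularity hypothesis needed.

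Star-shapedness about a common point $p$ needs $r_n\ge \max_{x\in X_Q}\|p-x\|$. For the common point, I would take $p$ near the center of $Q$ and enlarge $\theta$ slightly beyond $\sqrt d$, so that this holds for all sufficiently dense samples.
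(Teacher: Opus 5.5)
Your final argument is correct and is essentially the paper's. Both use a self-similar Cantor dust (the paper takes the product $C^d$ of a one-dimensional $m$-piece Cantor set of ratio $m^{-d/\alpha}$, you place $m$ separated subcubes in $\R^d$ directly) and scales $r_n\asymp\lambda^n$ at least the level-$n$ cluster diameter but below half the inter-cluster gap. Both then count one contractible overlap per cluster, with your star-shapedness about a point of $Q$ doing the job of the paper's retraction onto the convex hull $K_Q$.

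Two small corrections. Your first connectivity attempt (the bound on gaps between descendants of $Q$) is wrong as stated, but the choice $\theta\ge\sqrt d$ makes it moot. The only density you need is that each level-$n$ piece $E\cap Q$ contains points of both $X$ and $Y$, which holds once the sampling radius is below $g\lambda^n$. Then any $p\in Q$ is already within $\mathrm{diam}\,Q=\sqrt d\,\lambda^n\le r_n$ of every center, so no enlargement of $\theta$ is needed.
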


\begin{proof}
Let $C \subset [0,1]$ be a self-similar Cantor set that is a disjoint union $C = \bigsqcup_{j=1}^{m} C_j$, where each $C_j$ is a $\lambda$-homothetic copy of $C$ with $\lambda = m^{-1/s}$ and $s = \alpha/d \in (0,1)$, and let $E \subset \R^d$ be the $d$-fold product of $C$.
It follows directly from the construction that $\dim_B E = d\, \frac{\log m}{\log (1/\lambda)} = \alpha$.
The level-$n$ cells are $N_n = m^{dn}$ clusters, each of diameter $\sqrt{d}\,\lambda^n$, with pairwise gaps $\geq g\lambda^{n-1}/m$ for a fixed $g > 0$, and $m^{1 - 1/s} \to 0$ ensures diameters are eventually much smaller than gaps.
Set $r_n = 2\sqrt{d}\,\lambda^n$; for large $n$, $2r_n$ is below the inter-cluster gap, so $\mathcal{U}(E; r_n)$ is the disjoint union of the $N_n$ cluster offsets $\mathcal{U}(E \cap Q; r_n)$ over level-$n$ cells $Q$.

Each cluster offset is contractible: every point of the convex hull $K_Q$ of $E \cap Q$ lies within $\mathrm{diam}(E \cap Q) \leq r_n/2$ of $E \cap Q$, so $K_Q \subseteq \mathcal{U}(E \cap Q; r_n)$, and the straight-line homotopy toward the nearest-point projection $\pi_{K_Q}$ stays inside every ball with center in $K_Q$ (the argument of Corollary~\ref{cor:exact_betti}), retracting the cluster offset onto the convex $K_Q$.
Hence $\beta_0(\mathcal{U}(E; r_n)) = N_n = m^{dn} \geq c\, r_n^{-\alpha}$, and $\chi = N_n$ as well.
For the sample statement, let $X, Y \subset E$ be disjoint and $\epsilon$-dense with $\epsilon \leq r_n/2$. Then $\mathcal{U}(E; r_n - \epsilon) \subseteq \mathcal{U}(X; r_n) \cap \mathcal{U}(Y; r_n) \subseteq \mathcal{U}(E; r_n)$, so the sample overlap splits into the same $N_n$ groups; within each cell $Q$ the group is an intersection of two unions of $r_n$-balls with centers in $E \cap Q \subseteq K_Q$, and it contains $K_Q$ (each hull point is within $\mathrm{diam} + \epsilon \leq r_n$ of both $X \cap Q$ and $Y \cap Q$), so the same retraction makes each group contractible.
Hence $\Delta\chi(r_n; X, Y) = N_n$.
\end{proof}

Together, Corollary~\ref{cor:fractal_bound} and Proposition~\ref{prop:fractal_lower} pin the growth rate: a zone of upper box dimension $s$ lets the overlap's component count---and $|\Delta\chi|$, for acyclic components---grow no faster than $r^{-s}$ (up to an arbitrarily small loss, under no-early-merging), and for suitable geometry no slower.

Figure~\ref{fig:fractal_bound} confirms the rate directly. We take two clouds sampling a two-dimensional Cantor dust $E_a = C_a \times C_a$, with $C_a \subset [0,1]$ the two-child Cantor set of ratio $a$---the $m = 2$ specialization of Proposition~\ref{prop:fractal_lower}'s construction. It is totally disconnected, so at the self-similar scales $r_n$ its tube $\mathrm{Tub}_{r_n}(E_a)$ is a disjoint union of contractible clusters and $\chi$ there equals the component count $4^n$; its box dimension is $s(a) = \log 4/\log(1/a)$, and we compute $\Delta\chi(r)$ by the Alpha-complex sweep of Section~\ref{sec:algorithm}. Across a family of dusts---sampled over $s \in [0.54, 0.86]$, i.e.\ ratios $a \in [0.08, 0.20]$ kept well below $1/2$ so the clusters stay strongly separated---the profile climbs as a self-similar staircase through the values $4^n$ at the scales $r_n \asymp a^n$, exactly as Proposition~\ref{prop:fractal_lower} predicts, and its fitted growth exponent equals the box dimension to three decimals. Reaching larger $s$ (ratio $a$ nearer $1/2$) would demand finer sampling and smaller scales---the finite-resolution form of the $r \to 0$ asymptotics---which is why the range is capped.

\begin{figure}[t]
\centering
\includegraphics[width=\textwidth]{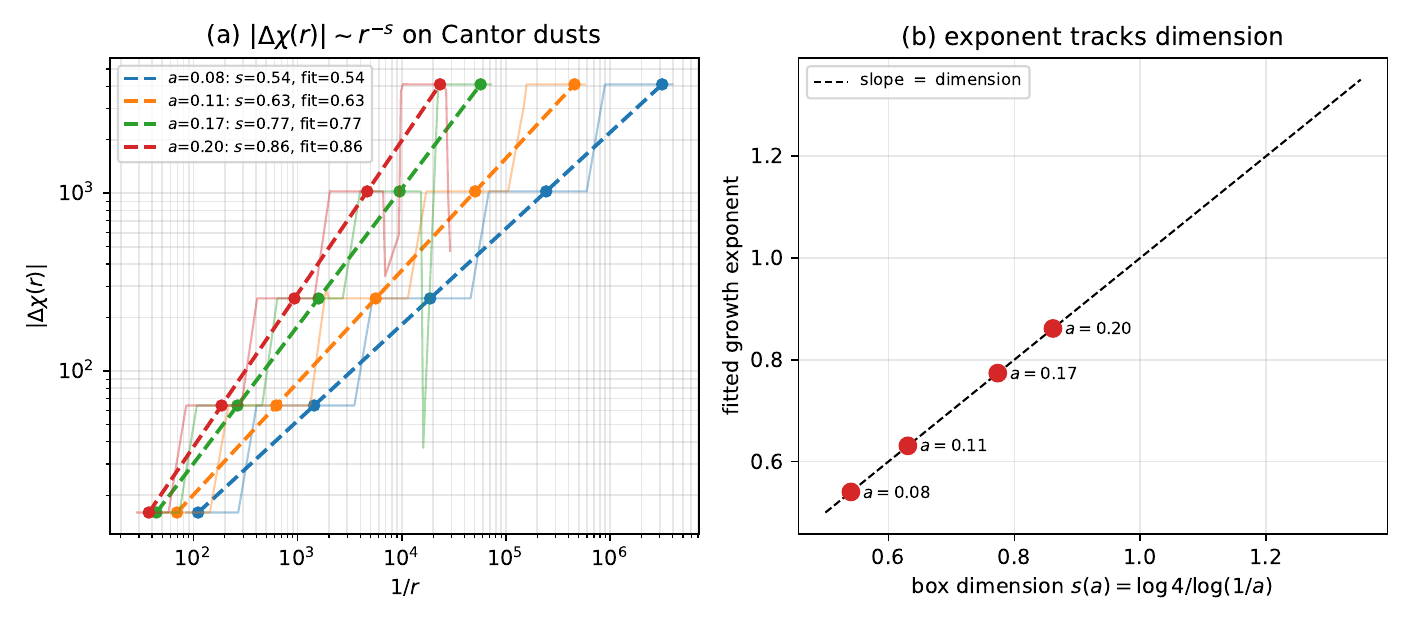}
\caption{Empirical validation of the fractal growth rate (Corollary~\ref{cor:fractal_bound}, Proposition~\ref{prop:fractal_lower}). Two clouds sample a Cantor dust $E_a = C_a \times C_a$ of box dimension $s(a) = \log 4/\log(1/a)$; $\Delta\chi(r)$ is computed by the Alpha-complex sweep. \textbf{(a)} For each dust the profile $|\Delta\chi(r)|$ (thin staircase) sits on the plateau values $4^n$ (dots) at the self-similar scales, hugging the power law $r^{-s}$ (dashed); both axes are logarithmic and the horizontal axis is $1/r$, so $r^{-s}$ appears as a line of slope $s$. \textbf{(b)} The fitted growth exponent (linear axes) equals the box dimension for every dust in the family.}
\label{fig:fractal_bound}
\end{figure}

\subsection{Multiparameter Stability}\label{sec:multiparameter_stability}

Theorem~\ref{thm:algebraic_stability} gives bottleneck, off-critical, and $L^1$ stability on the diagonal; as noted there, $L^1$ does \emph{not} control the peak statistic $\max_r|\Delta\chi|$, and we do not pursue a perturbation guarantee for it here: its statistical use rests on the recovery theory of Sections~\ref{sec:discrimination} and~\ref{sec:consistency}, and its distribution theory is left to future work.

The diagonal stability of Theorem~\ref{thm:algebraic_stability} does extend to the full $k$-parameter surface.

\begin{proposition}[Multiparameter stability]\label{prop:multiparameter_stability}
For $\mathcal{X}, \mathcal{X}' \in \mathbf{PC}_k(\R^d)$ with $\epsilon = \max_i d_H(X_i, X'_i)$:
\begin{enumerate}
    \item \textbf{Multidirectional interleaving.} For all $\mathbf{t}$ with $t_i \geq \epsilon$,
    \[
        \textstyle\bigcap_i \mathcal{U}(X'_i; t_i - \epsilon) \;\subseteq\; \bigcap_i \mathcal{U}(X_i; t_i) \;\subseteq\; \bigcap_i \mathcal{U}(X'_i; t_i + \epsilon),
    \]
    and symmetrically, so the two $k$-parameter intersection filtrations are $\epsilon$-interleaved.
    \item \textbf{Agreement off critical loci.} $\Delta\chi(\mathbf{t}; \mathcal{X}) = \Delta\chi(\mathbf{t}; \mathcal{X}')$ whenever the box $\prod_i [t_i - \epsilon, t_i + \epsilon]$ is free of homologically critical scales of \emph{both} $k$-parameter intersection filtrations (Definition~\ref{def:regular_window}).
    \item \textbf{Windowed $L^1$ bound.} On any bounded window $W \subset \R_{\geq 0}^k$, $\int_W |\Delta\chi(\mathbf{t};\mathcal{X}) - \Delta\chi(\mathbf{t};\mathcal{X}')|\,d\mathbf{t} \leq C_W\,\epsilon$, {where $C_W$ is a finite constant determined by $\sup_{\mathbf{t} \in W}|\Delta\chi(\mathbf{t};\mathcal{X}) - \Delta\chi(\mathbf{t};\mathcal{X}')|$ and the total $(k-1)$-dimensional volume, within $W$, of the critical loci of the two intersection filtrations (both finite, since the clouds are finite; see the proof).}
\end{enumerate}
\end{proposition}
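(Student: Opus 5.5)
Part~(1) follows the proof of Theorem~\ref{thm:algebraic_stability}(1) coordinatewise. From $d_H(X_i,X'_i)\le\epsilon$ we get $\mathcal{U}(X'_i;t_i-\epsilon)\subseteq\mathcal{U}(X_i;t_i)\subseteq\mathcal{U}(X'_i;t_i+\epsilon)$ for each $i$ with $t_i\ge\epsilon$. Intersecting over $i$ gives the displayed inclusions, and exchanging the roles of $\mathcal{X},\mathcal{X}'$ gives the symmetric ones. These inclusions commute with the structure inclusions of both filtrations, so they form an $\epsilon$-interleaving along the direction $\epsilon\mathbf{1}$. Applying $H_*(-;\mathbb{F})$ then yields an $\epsilon$-interleaving of the $k$-parameter modules $\mathrm{PH}_*(\mathcal{X})$ and $\mathrm{PH}_*(\mathcal{X}')$. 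As in Proposition~\ref{prop:spectrum_stability}, we adopt the convention $\mathcal{U}(X;s)=\emptyset$ for $s<0$ so the interleaving maps are defined near the boundary.

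For part~(2) I would not use barcodes, since none exist in $k$ parameters; instead I would run the sandwich argument of Lemma~\ref{lem:scale_equivariance} and Theorem~\ref{thm:discrimination}(2). Write $F_{\mathbf{s}}$ and $G_{\mathbf{s}}$ for the two overlaps. There is a chain
\[
G_{\mathbf{t}-\epsilon\mathbf{1}}\subseteq F_{\mathbf{t}}\subseteq G_{\mathbf{t}+\epsilon\mathbf{1}},
\qquad
F_{\mathbf{t}-\epsilon\mathbf{1}}\subseteq G_{\mathbf{t}}\subseteq F_{\mathbf{t}+\epsilon\mathbf{1}}.
\]
Since the box is free of critical scales for $G$, the composite $H_*(G_{\mathbf{t}-\epsilon\mathbf{1}})\to H_*(G_{\mathbf{t}+\epsilon\mathbf{1}})$ is an isomorphism. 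Hence the map $H_*(F_{\mathbf{t}})\to H_*(G_{\mathbf{t}+\epsilon\mathbf{1}})$ is surjective, and the map $H_*(G_{\mathbf{t}-\epsilon\mathbf{1}})\to H_*(F_{\mathbf{t}})$ is injective. Doing the same with the roles exchanged, using that the box is free of critical scales for $F$, gives the reverse maps. Since all groups are finite-dimensional (finite ball unions), we obtain
\[
\beta_q(F_{\mathbf{t}})\ \ge\ \beta_q(G_{\mathbf{t}+\epsilon\mathbf{1}})=\beta_q(G_{\mathbf{t}})\ \ge\ \beta_q(F_{\mathbf{t}-\epsilon\mathbf{1}})=\beta_q(F_{\mathbf{t}})
\]
in every degree. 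Therefore all Betti numbers agree, and the Intersection Theorem gives $\Delta\chi(\mathbf{t};\mathcal{X})=\Delta\chi(\mathbf{t};\mathcal{X}')$.

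Part~(3) is the main obstacle. By part~(2), the set $D\subseteq W$ where the two profiles differ is contained in the $\epsilon$-neighbourhood, in $\ell^\infty$, of the union $\Sigma$ of the critical loci of both filtrations. Here the critical locus is the set of $\mathbf{t}$ at which some box containing $\mathbf{t}$ fails Definition~\ref{def:regular_window}. The step I expect to be hardest is showing that $\Sigma\cap W$ is a finite union of pieces of hypersurfaces, with finite $(k-1)$-volume $V$. My first attempt would use the nerve model of Lemma~\ref{lem:cell_decomp}. Each cell $\bigcap_i B(x_i,t_i)$, and each finite intersection of cells, is nonempty exactly on a semi-algebraic region of $\mathbf{t}$-space; its boundary is the zero set of finitely many polynomial conditions, namely the power/miniball conditions on finitely many centers. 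There are finitely many such regions because the clouds are finite. The nerve, and hence the homology, is constant on each cell of the common refinement of these regions. The critical locus therefore lies in a finite union of semi-algebraic hypersurfaces, which have finite $(k-1)$-volume inside the bounded $W$. A tube estimate then bounds the $k$-volume of the $\epsilon$-neighbourhood: $\mathrm{vol}_k(D)\le c_k V\epsilon+O(\epsilon^2)$, and for small $\epsilon$ the $O(\epsilon^2)$ term is absorbed into the constant. For $\epsilon$ large relative to $W$, the trivial bound $\mathrm{vol}(W)$ is at most $(\mathrm{vol}(W)/\epsilon_0)\,\epsilon$ for $\epsilon\ge\epsilon_0$. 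Combining these gives
\[
\int_W|\Delta\chi-\Delta\chi'|\,d\mathbf{t}\ \le\ \sup_W|\Delta\chi-\Delta\chi'|\cdot\mathrm{vol}_k(D)\ \le\ C_W\,\epsilon.
\]
The sup is finite because each profile takes finitely many values, bounded by the simplex counts of the nerves. One subtlety to handle with care is that the critical loci of $\mathcal{X}'$ depend on $\mathcal{X}'$, so $C_W$ is a configuration-dependent constant, exactly as the statement allows.
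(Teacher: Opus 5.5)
Your parts (1) and (2) follow the paper's argument: the same coordinatewise offset inclusions intersected over $i$, and the same sandwich in (2). The paper uses injectivity of $H_*(F_{\mathbf{t}-\epsilon\mathbf{1}})\to H_*(G_{\mathbf{t}})$ and then exchanges roles; your surjective/injective pairing is the same computation.

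You genuinely diverge at the step you flagged in (3). The paper works with the total space $E=\{(p,\mathbf{t}) : p\in\bigcap_i\mathcal{U}(X_i;t_i)\}\subset\R^d\times\R^k$. It applies Hardt's semi-algebraic triviality theorem to the projection $E\to\R^k$ to get a codimension-one semi-algebraic $\Sigma^0\supseteq\Sigma$, decomposes $\Sigma^0\cap W$ into finitely many smooth cells, and applies the tube formula cell by cell. You instead locate the critical locus combinatorially. Each subfamily of convex cells has an upward-closed semi-algebraic birth region $R_\sigma$ in parameter space, the nerve is locally constant off $\bigcup_\sigma\partial R_\sigma$, and this finite union of sets of dimension $\le k-1$ contains $\Sigma$.

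Your route is more elementary and more explicit: the critical hypersurfaces are the multiparameter birth boundaries of nerve simplices. It also yields exactly what Definition~\ref{def:regular_window} requires, namely isomorphisms induced by inclusion, once you add one sentence. For $\mathbf{s}\le\mathbf{u}$ with equal nerves, the functorial Nerve theorem (as used in Section~\ref{sec:relative_stability}) identifies $H_*(F_{\mathbf{s}})\to H_*(F_{\mathbf{u}})$ with the identity of the nerve. Saying ``the homology is constant'' is not enough on its own. Hardt's theorem is more general, since it needs no convex cover. However, local topological triviality only gives homeomorphic fibres, and the paper asserts rather than argues the passage to isomorphisms induced by the subset inclusions.

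Two small fixes are needed:
\begin{itemize}
\item Define the critical locus as the set of $\mathbf{t}$ that lie in the interior of no regular box. As written, ``some box containing $\mathbf{t}$ fails'' makes $\Sigma$ everything once any critical point exists.
\item Note that a closed box missing $\Sigma$ is regular, by chaining small regular boxes along the monotone segment from $\mathbf{s}$ to $\mathbf{u}$.
\end{itemize}

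Finally, in both arguments the tube bound is linear in $\epsilon$ only up to higher-order terms that depend on the geometry of $\Sigma$ (and hence on $\mathcal{X}'$). So $C_W$ is not literally a function of the supremum and the $(k-1)$-volume alone. Your separate treatment of large $\epsilon$ via $\mathrm{vol}(W)$ is, if anything, more careful than the paper's purely asymptotic $O(\epsilon)$.
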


\begin{proof}
Since $d_H(X_i, X'_i) \leq \epsilon$, $\mathcal{U}(X'_i; t_i - \epsilon) \subseteq \mathcal{U}(X_i; t_i) \subseteq \mathcal{U}(X'_i; t_i + \epsilon)$ for $t_i \geq \epsilon$; intersecting over $i$ gives (1).
For (2), write $F_{\mathbf{s}} = \bigcap_i \mathcal{U}(X_i; s_i)$ and $G_{\mathbf{s}} = \bigcap_i \mathcal{U}(X'_i; s_i)$.
By (1), $H_*(F_{\mathbf{t}-\epsilon\mathbf{1}}) \to H_*(G_{\mathbf{t}})$ is injective, being a factor of the isomorphism $H_*(F_{\mathbf{t}-\epsilon\mathbf{1}}) \to H_*(F_{\mathbf{t}+\epsilon\mathbf{1}})$ (regularity of $\mathcal{X}$ on the box), so $\dim H_*(G_{\mathbf{t}}) \geq \dim H_*(F_{\mathbf{t}-\epsilon\mathbf{1}}) = \dim H_*(F_{\mathbf{t}})$ in every degree; exchanging the roles of $\mathcal{X}$ and $\mathcal{X}'$ (regularity of $\mathcal{X}'$) gives the reverse inequalities.
Hence all Betti numbers agree at $\mathbf{t}$ and $\Delta\chi(\mathbf{t};\mathcal{X}) = \Delta\chi(\mathbf{t};\mathcal{X}')$ by the Intersection Theorem.

For (3), consider
\[
    E = \big\{(p, \mathbf{t}) \in \R^d \times \R^k : p \in \textstyle\bigcap_i \mathcal{U}(X_i; t_i)\big\},
\]
and let $\pi : E \to \R^k$ be the projection to the second factor. The identity
\[
    E = \bigcap_{i=1}^{k} \bigcup_{x_i \in X_i} \big\{(p, \mathbf{t}) : \|p - x_i\| \leq t_i\big\}
\]
shows that $E$ is a semi-algebraic subset of $\R^d \times \R^k$, hence so is the projection $\pi$. By Hardt's semi-algebraic triviality theorem \citep{hardt1980semialgebraic}, there is a semi-algebraic subset $\Sigma^0 \subset \R^k$ of codimension one such that $\pi$ restricts to a locally trivial bundle over $\R^k \setminus \Sigma^0$; consequently every $\mathbf{t} \in \R^k \setminus \Sigma^0$ has a neighborhood $V$ on which $H_q(\bigcap_i \mathcal{U}(X_i; t_i)) \to H_q(\bigcap_i \mathcal{U}(X_i; u_i))$ is an isomorphism for all $\mathbf{u} \in V$. Hence the homological critical locus $\Sigma$ of Definition~\ref{def:regular_window} is contained in $\Sigma^0$.

Fix a bounded window $W$ and set $\Sigma_W^0 = \Sigma^0 \cap W$, $\Sigma_W = \Sigma \cap W$. By the cell decomposition of semi-algebraic sets \citep{coste2000semialgebraic}, $\Sigma_W^0 = \bigsqcup_j M_j$ is a finite disjoint union of smooth semi-algebraic manifolds of dimensions $d_j \leq k-1$ with compact closure; the tube formula for a smooth compact submanifold \citep[Theorem~4.8]{gray2004tubes} gives $\mathrm{vol}\big(N_\epsilon(M_j)\big) = O(\epsilon^{\,k - d_j})$, so as $\epsilon \to 0$,
\[
    \mathrm{vol}\big(N_\epsilon(\Sigma_W)\big) \;\leq\; \mathrm{vol}\big(N_\epsilon(\Sigma_W^0)\big) \;\leq\; \sum_j \mathrm{vol}\big(N_\epsilon(M_j)\big) \;=\; O(\epsilon),
\]
the top-dimensional strata ($d_j = k-1$) dominating and any stratum of dimension $< k-1$ contributing $o(\epsilon)$. Since $|\Delta\chi(\cdot;\mathcal{X}) - \Delta\chi(\cdot;\mathcal{X}')|$ is bounded on $W$ by $\sup_W|\Delta\chi(\cdot;\mathcal{X}) - \Delta\chi(\cdot;\mathcal{X}')|$, integrating over $N_\epsilon(\Sigma_W)$ gives $\int_W|\Delta\chi(\cdot;\mathcal{X}) - \Delta\chi(\cdot;\mathcal{X}')|\,d\mathbf{t} \leq C_W\,\epsilon$ with $C_W$ as stated.
\end{proof}
The unconditional guarantees are the bottleneck and $L^1$ bounds of Theorem~\ref{thm:algebraic_stability} and Proposition~\ref{prop:multiparameter_stability}. All of these hold, moreover, in Wasserstein form: for equal-cardinality clouds $W_\infty$ equals the bottleneck matching distance and $d_H \leq W_\infty$, so every stability conclusion above holds verbatim with $\epsilon = \max_i W_\infty(X_i, X'_i)$; the $W_p$ ($p < \infty$) outlier-robust form and its sample complexity are left to future work.

\section{Relative-Homology Refinement}\label{sec:relative}

The Intersection ECP records a single integer $\chi(\bigcap_i \mathcal{U}(X_i; t_i))$ at each scale.
This is exactly what makes it cheap and stable, but it is also lossy: the Euler characteristic can vanish on a topologically nontrivial overlap.
We make the loss precise, isolate the finer invariant that repairs it, and record what is known and open about its stability.
Throughout this section we take $k = 2$ and write $C = \mathcal{U}(X; r) \cap \mathcal{U}(Y; s)$ for the overlap; everything extends to general $k$ as indicated below.

\begin{example}[The $\chi = 0$ degeneracy]\label{rem:degeneracy}
Reading $\chi(C) = 0$ as ``no interaction'' is unsafe: $C$ can be nonempty with rich topology yet $\chi(C) = 0$, the simplest case being $C \simeq S^1$.
A concrete realization in $\R^3$: let $T$ be a torus surface and $\gamma \subset T$ an essential longitude.
Sampling $Y$ on $T$ gives $\mathcal{U}(Y; s) \simeq T^2$ and sampling $X$ on $\gamma$ gives $\mathcal{U}(X; r) \simeq S^1$, with overlap a band $C \simeq S^1$; all three have $\chi = 0$.
Yet $X$ and $Y$ plainly interact---the inclusion $C \simeq S^1 \hookrightarrow \mathcal{U}(Y; s) \simeq T^2$ is not a homotopy equivalence, and the relative homology is nonzero, $H_1(T^2, S^1) = H_2(T^2, S^1) = \Z$.
The two relative classes cancel in the relative Euler characteristic $\chi(T^2, S^1) = -1 + 1 = 0$, exactly as $b_0 - b_1 = 0$ hides the loop of $S^1$.
Thus $\chi(C)$ alone---and even the relative Euler characteristic---can miss genuine interaction.
\end{example}

The repair is to retain the inclusions rather than only the Euler characteristic of $C$.
We say $X$ and $Y$ \emph{do not homotopically intervene} at scales $(r, s)$ if $C = \emptyset$, or both inclusions
\[
    C \hookrightarrow \mathcal{U}(X; r), \qquad C \hookrightarrow \mathcal{U}(Y; s)
\]
are homotopy equivalences.
In particular the relative homology groups $H_*(\mathcal{U}(X; r), C)$ and $H_*(\mathcal{U}(Y; s), C)$ then vanish; conversely, vanishing relative homology forces the inclusions to be homotopy equivalences whenever they are $\pi_1$-isomorphisms (relative Hurewicz), e.g.\ when the pieces are simply connected \citep{hatcher2002algebraic}.
The relative homology is the natural carrier of the interaction: by excision,
\begin{equation}\label{eq:excision}
    H_*\big(\mathcal{U}(X; r),\, C\big) \;\cong\; H_*\big(\mathcal{U}(X; r) \cup \mathcal{U}(Y; s),\; \mathcal{U}(Y; s)\big),
\end{equation}
the homology of the union \emph{relative to} $\mathcal{U}(Y; s)$---precisely what $\mathcal{U}(X; r)$ adds to $\mathcal{U}(Y; s)$ \citep{hatcher2002algebraic}.
Note that~\eqref{eq:excision} is the relative homology group of the union pair, which can record how $\mathcal{U}(Y; s)$ sits inside it.

For $k$ clouds the analogue is the family of pairs $\big\{\big(\bigcup_i \mathcal{U}(X_i; t_i),\; \bigcup_{i \neq j} \mathcal{U}(X_i; t_i)\big) \mid j = 1, \ldots, k\big\}$, one ``leave-one-out'' union for each $j$, measuring what cloud $j$ contributes to the rest.
These capture each cloud's marginal contribution but not the genuinely higher-order interaction carried by the triple and deeper overlaps; a \emph{faithful} $k$-cloud relative invariant is left open (Section~\ref{sec:discussion}).

Letting the scales vary produces, for each cloud left out, a \emph{leave-one-out relative module}.
For $k = 2$, leaving out $Y$ gives
\[
    \mathrm{PH}_*(X\cup Y, Y) \;=\; \big\{\, H_*\big(\mathcal{U}(X; r)\cup\mathcal{U}(Y; s),\ \mathcal{U}(Y; s)\big) \,\big\}_{r,s\geq 0} \;\cong\; \big\{\, H_*(\mathcal{U}(X; r),\, C(r,s)) \,\big\}_{r,s\geq 0},
\]
the last isomorphism by excision~\eqref{eq:excision}; it is a two-parameter persistence module refining $\Delta\chi$, whose (relative) Euler characteristic recovers the discrepancy $\chi(\mathcal{U}(X; r)) - \chi(C)$ while the module itself sees the classes that cancel in it.

A \emph{single} leave-one-out module is not faithful.
In Example~\ref{rem:degeneracy}, $C \simeq S^1 \hookrightarrow \mathcal{U}(X;r) \simeq S^1$ is an equivalence, so $\mathrm{PH}_*(X\cup Y, Y) \cong H_*(\mathcal{U}(X;r), C)$ \emph{vanishes}, and the interaction it misses---the nonzero groups $H_*(T^2, S^1)$---lives in the mirror module $\mathrm{PH}_*(X\cup Y, X)$.
We therefore carry all $k$ leave-one-out pairs at once, in the \emph{symmetrized relative interaction module}: for $\mathcal{X} = \{X_i \mid i = 1, \ldots, k\}$, let
\[
\begin{aligned}
    \mathrm{RPH}_*(\mathcal{X})
    &\;=\; \bigoplus_{i=1}^{k} \Big\{\, H_*\big(\textstyle\bigcup_{j} \mathcal{U}(X_{j};t_{j}),\ \bigcup_{j\neq i}\mathcal{U}(X_{j}; t_{j})\big) \,\Big\}_{\mathbf{t} \geq 0} \\
    &\;\cong\; \bigoplus_{i=1}^{k} \Big\{\, H_*\big(\mathcal{U}(X_i; t_i),\ \mathcal{U}(X_i; t_i) \cap \textstyle\bigcup_{j\neq i}\mathcal{U}(X_{j}; t_{j})\big) \,\Big\}_{\mathbf{t} \geq 0},
\end{aligned}
\]
the $i$-th summand measuring what cloud $i$ contributes to the rest, and the second isomorphism the cloudwise excision~\eqref{eq:excision}.
Retaining every summand, $\mathrm{RPH}_*$ is faithful where a single module is not: the degeneracy of Example~\ref{rem:degeneracy} is recovered by its {$i = Y$} summand $\mathrm{PH}_*(X \cup Y, X) \cong \mathrm{PH}_*(\mathcal{U}(Y), C)$, the nonzero one.
For $k = 2$ it has exactly the two summands $\mathrm{PH}_*(X\cup Y, Y)$ and $\mathrm{PH}_*(X\cup Y, X)$.
It is stable, and the stability is \emph{carried by the unions}.

\begin{theorem}[Stability of the symmetrized relative interaction module]\label{thm:relative_stability}
For tuples $\mathcal{X} = (X_1,\ldots,X_k)$ and $\mathcal{X}' = (X'_1,\ldots,X'_k)$ of finite subsets of $\R^d$ with $\epsilon = \max_i d_H(X_i, X'_i)$, the symmetrized relative interaction modules are stable in the multiparameter interleaving distance:
\[
    d_I\big(\mathrm{RPH}_*(\mathcal{X}),\ \mathrm{RPH}_*(\mathcal{X}')\big) \;\leq\; \epsilon.
\]
The bound holds summand by summand: for each $i$ the leave-one-out summand is $\epsilon$-interleaved with its counterpart, and a direct sum of $\epsilon$-interleavings is an $\epsilon$-interleaving.
\end{theorem}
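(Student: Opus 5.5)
We use the first description of $\mathrm{RPH}_*$, the leave-one-out union pairs $\big(U(\mathbf{t}), V_i(\mathbf{t})\big)$ with $U(\mathbf{t}) = \bigcup_j \mathcal{U}(X_j; t_j)$ and $V_i(\mathbf{t}) = \bigcup_{j \neq i} \mathcal{U}(X_j; t_j)$. We deliberately avoid the excised form $\big(\mathcal{U}(X_i;t_i), \mathcal{U}(X_i;t_i)\cap V_i\big)$. Both forms interleave, but the union form makes the interleaving maps transparent: the stability is \emph{carried by the unions}, as the statement says. As in Proposition~\ref{prop:spectrum_stability}, adopt the convention $\mathcal{U}(X;s) = \emptyset$ for $s < 0$, so that all shifted objects are defined at every $\mathbf{t} \geq 0$.

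\textbf{Step 1 (pointwise containments).} Since $d_H(X_j, X'_j) \leq \epsilon$, we have $\mathcal{U}(X_j; s) \subseteq \mathcal{U}(X'_j; s+\epsilon)$ for every $s$ (trivially when $s<0$), and symmetrically with $X_j, X'_j$ exchanged. Taking unions over all $j$, and over $j \neq i$, gives
\[
U(\mathbf{t}) \subseteq U'(\mathbf{t}+\epsilon\mathbf{1}), \qquad V_i(\mathbf{t}) \subseteq V'_i(\mathbf{t}+\epsilon\mathbf{1}).
\]
So inclusion is a map of pairs $(U(\mathbf{t}), V_i(\mathbf{t})) \to (U'(\mathbf{t}+\epsilon\mathbf{1}), V'_i(\mathbf{t}+\epsilon\mathbf{1}))$. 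The same holds with primes exchanged.

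\textbf{Step 2 (module interleaving).} Relative homology is functorial for maps of pairs, and every map in sight is an inclusion of subsets of $\R^d$. Hence these maps are natural in $\mathbf{t}$, i.e.\ they commute with the internal structure maps $\mathbf{s} \leq \mathbf{t}$. Moreover, the composite $\mathbf{t} \to \mathbf{t}+2\epsilon\mathbf{1}$ through the primed pair is again the inclusion of pairs, so it equals the structure map of $\mathrm{RPH}$. Functoriality therefore yields the two triangle identities and the naturality squares of an $\epsilon$-interleaving between the $i$-th summands, degree by degree and over $\mathbb{F}$. Finally, take the direct sum over $i$ of the interleaving morphisms. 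Direct sums preserve commutativity componentwise, so we obtain an $\epsilon$-interleaving of $\mathrm{RPH}_*(\mathcal{X})$ with $\mathrm{RPH}_*(\mathcal{X}')$, and hence $d_I \leq \epsilon$.

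\textbf{Main obstacle.} The hard part is bookkeeping rather than depth. One must check that the \emph{same} shift works for both members of each pair simultaneously, which it does because a single $\epsilon$ bounds every cloud. One must also handle the low scales $t_j < \epsilon$, which is the reason for the empty-set convention. If one instead insisted on the excised form, the shifted intersection $\mathcal{U}(X_i;t_i)\cap V_i$ also shifts monotonically, so the argument goes through there too. It is simply cleaner to interleave the union pairs and transport the result along the excision isomorphisms, which are natural in $\mathbf{t}$.
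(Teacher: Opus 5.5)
Your argument is correct, but it is not the route the paper takes. The paper first replaces each leave-one-out pair by the \v{C}ech pair $\big(\check{C}(\bigcup_j (X_j;t_j)),\ \check{C}(\bigcup_{j\neq i}(X_j;t_j))\big)$. It makes this identification natural in $\mathbf{t}$ using the functorial Nerve theorem, a Five-Lemma ladder, and excision (with the deformation-retract footnote). It then interleaves the \v{C}ech pairs by simplicial maps induced by nearest-point vertex maps $X_j \to X'_j$. The composite of the two simplicial maps is only contiguous to the inclusion, not equal to it. So the paper needs an extra step: it restricts the contiguity homotopy to the sub-union complex, obtaining a homotopy of pairs that yields the triangle identities.

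You instead interleave the filtrations of pairs of subsets of $\R^d$ directly. Every interleaving map is an inclusion, so naturality and the triangle identities hold on the nose. You need no nerve theorem, no excision subtlety, and no homotopy. This is exactly the mechanism the paper itself uses for Theorem~\ref{thm:algebraic_stability}(1) and for the pair filtrations in Proposition~\ref{prop:spectrum_stability}.

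The paper's detour buys two things. First, it gives a simplicial, computable model of the module. Second, its proof uses only vertex maps, so it transfers to an abstract color-respecting correspondence, as in the Gromov--Hausdorff strengthening of Remark~\ref{rem:gh_stability}, where no common ambient space supplies inclusions. Your ambient-inclusion proof is tied to the Euclidean Hausdorff setting, but it is the shorter and cleaner proof of the statement as posed.

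One small point: the empty-set convention is unnecessary. An interleaving only shifts upward, and $\mathcal{U}(X_j;s) \subseteq \mathcal{U}(X'_j; s+\epsilon)$ already holds for every $s \geq 0$, so low scales need no special handling.
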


On the diagonal $t_1 = \cdots = t_k$ each summand is a one-parameter module, and its barcode is within bottleneck distance $\epsilon$ of its counterpart's by the isometry theorem \citep{chazal2016structure}.
Off the diagonal the multiparameter summands admit no barcode \citep{carlsson2009theory, botnan2022introduction}, so the general statement is necessarily in interleaving \citep{lesnick2015theory} rather than bottleneck form.
The proof is in Section~\ref{sec:relative_stability}: it realizes each leave-one-out module through \v{C}ech complexes, identifies it with the corresponding union pair by excision and a Five-Lemma ladder, and interleaves the resulting \v{C}ech pairs by simplicial maps---so the (Hausdorff-stable) unions carry the stability.

\bigskip

The scalar invariants $\beta_q(X,Y;r,s) = \beta_q\big(\mathcal{U}(X;r)\cup\mathcal{U}(Y;s),\, \mathcal{U}(Y;s)\big)$ and their total $\beta_{\mathrm{tot}} = \sum_q \beta_q$ are read directly off the module $\mathrm{PH}_*(X\cup Y, Y)$, and inherit its $L^1$-stability from Theorem~\ref{thm:relative_stability}.

{ This stability is best read through the union side of the excision~\eqref{eq:excision}. Note first that the set-level distinction between $\cup$ and $\cap$ is \emph{not} what drives it: the intersection \emph{module} $\mathrm{PH}_*$ is itself interleaving-stable (Theorem~\ref{thm:algebraic_stability}(1)), even though the intersection \emph{set} $C$ can vary discontinuously in Hausdorff distance. What the union realization buys is that every interleaving map can be built from the (Hausdorff-Lipschitz) unions, so the set-level instability of $C$ never has to be confronted.}

The offset unions $\mathcal{U}(X_i; t_i)$ and $\bigcup_i \mathcal{U}(X_i; t_i)$ are $1$-Lipschitz in Hausdorff distance.
The intersection $C$ is not: at a fixed scale $d_H(C, C')$ admits no bound in terms of $\max(d_H(X,X'), d_H(Y,Y'))$---a single tangency can make one overlap empty and the other not---even though the intersection \emph{module} is stable (Theorem~\ref{thm:algebraic_stability}(1)).
So the instability is of $C$ \emph{as a set}, not of its persistence.
By~\eqref{eq:excision} the relative module is built entirely from the union pair $(\bigcup, \mathcal{U}(Y))$, with $C$ never appearing explicitly, so every interleaving map is an honest inclusion-induced map of a complex into a larger one, and the stability is carried by the (stable) unions.
Making this precise---constructing the interleaving maps from the union filtrations and controlling the induced maps on relative homology---is carried out in Section~\ref{sec:relative_stability}, establishing Theorem~\ref{thm:relative_stability}.

Together these assemble into a refinement tower, of increasing fidelity and cost:
\[
    \underbrace{\Delta\chi = \chi(C)}_{\text{shipped: cheap, stable}}
    \;\rightsquigarrow\;
    \underbrace{\chi(\mathcal{U}(X; r)) - \chi(C)}_{\text{relative Euler char: a cheap necessary test}}
    \;\rightsquigarrow\;
    \underbrace{H_*(\mathcal{U}(X; r), C)
    \oplus H_*(\mathcal{U}(Y; s), C)}
    _{\text{relative module: finer, stable (Thm~\ref{thm:relative_stability})}}.
\]
The first is what we compute with throughout this paper; the second is a cheap necessary condition for non-intervention (it can vanish while the module does not, as in Remark~\ref{rem:degeneracy}); the third is the finer, stable invariant that resolves what $\chi$ cancels, at the cost of the computational simplicity that motivated $\Delta\chi$.
A single leave-one-out module can still miss interaction that its mirror carries (Example~\ref{rem:degeneracy}); the \emph{faithful} object is the symmetrized sum $\mathrm{RPH}_*$ of all $k$ leave-one-out modules, equally stable by Theorem~\ref{thm:relative_stability}.
We adopt $\Delta\chi$ as the shipped invariant and record the module as its principled refinement.

\paragraph{The profile does not rescue the structural degeneracy.}
One might hope that reading $\Delta\chi$ across \emph{all} scales, rather than at a single $\mathbf{t}$, recovers what the Euler characteristic discards at each scale.
It recovers the \emph{accidental} loss: the profile jumps at the critical scales of \emph{all} homological degrees, so two features that cancel in $\chi$ at one scale generically reappear as jumps elsewhere, born and dying at different scales.
It does nothing for the \emph{structural} loss---a cancellation stable across a whole band of scales.
Figure~\ref{fig:saturation} (cubical sublevel filtration of $\max(d_X, d_Y)$; \citealp{gudhi2024}) exhibits both: a $2$D annulus and a $3$D torus shell, each with $\chi = 0$ over a band on which the Intersection profile of the interacting pair is \emph{bit-identical} to that of a non-interacting pair with empty overlap, while the Betti curves separate them outright.
The profile thus records only \emph{when} the interaction's type changes, not \emph{what} it is; no scale resolution undoes a cancellation that holds at every scale.
That structural loss is what the relative module of this section is built to detect.

\begin{figure}[t]
\centering
\includegraphics[width=\textwidth]{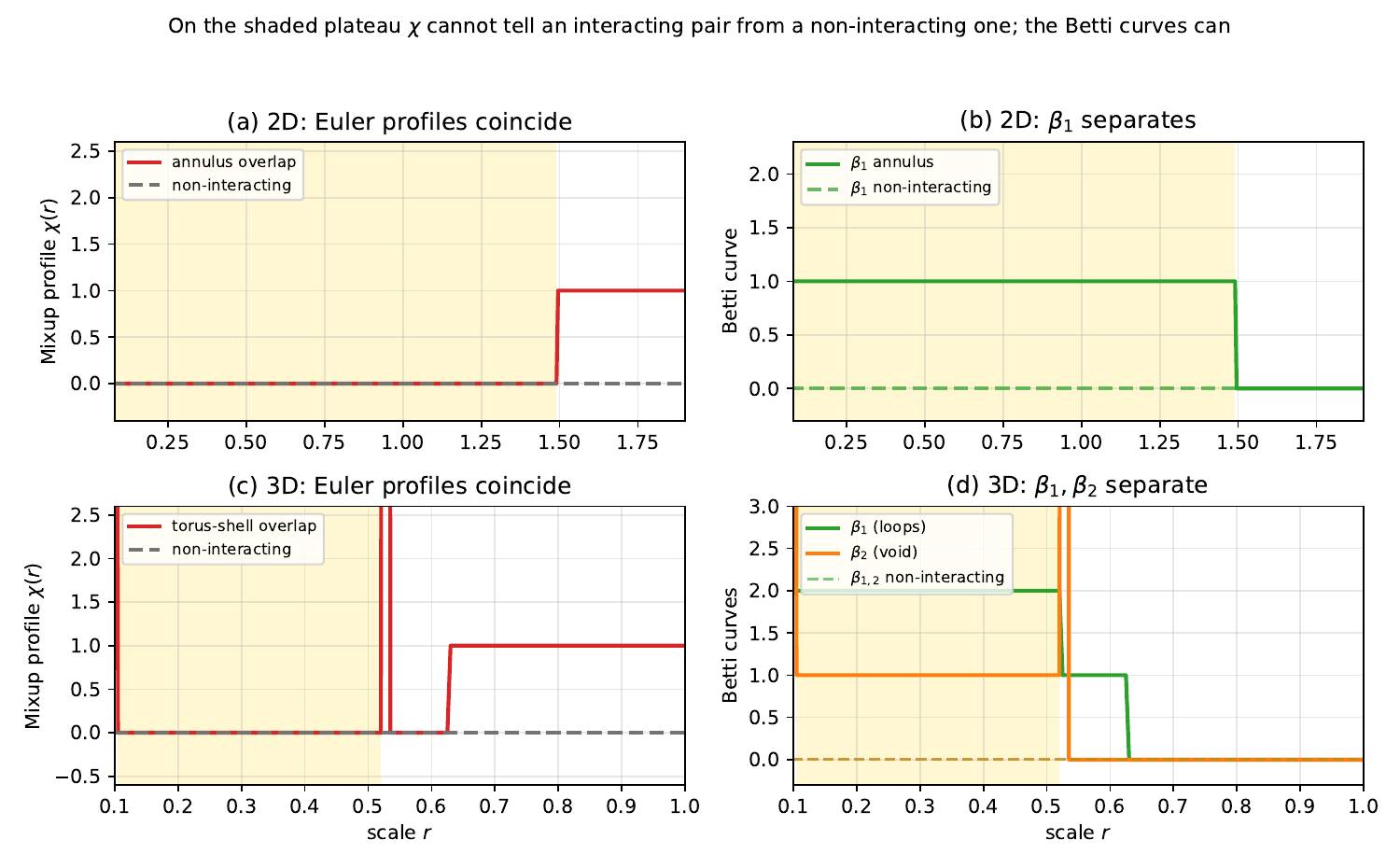}
\caption{The across-scale Intersection profile does not repair the single-scale degeneracy.
Each row compares an \emph{interacting} cloud pair, whose intersection $\mathcal{U}(X;r) \cap \mathcal{U}(Y;r)$ has nontrivial topology with $\chi = 0$, against a \emph{non-interacting} pair with empty overlap.
\textbf{Top (2D):} the intersection is an annulus ($\chi = 1 - 1 = 0$).
\textbf{Bottom (3D):} a torus shell ($\chi = 1 - 2 + 1 = 0$).
Left column: the Intersection Euler profile $\chi(r)$ of the two pairs is \emph{identical} across the shaded plateau (maximum difference $0$).
Right column: the Betti curves ($\beta_1$ in 2D; $\beta_1, \beta_2$ in 3D) separate them throughout the same band.
The profile sees only the boundary events, not the topology present on the plateau.}
\label{fig:saturation}
\end{figure}

\subsection{Construction of the module and proof of stability}\label{sec:relative_stability}

This subsection constructs the leave-one-out relative modules through \v{C}ech complexes and proves the interleaving stability of their symmetrized sum $\mathrm{RPH}_*$ (Theorem~\ref{thm:relative_stability}).
We write out the two-cloud summand $\mathrm{PH}_*(X\cup Y, Y)$; the general case follows summand by summand, as recorded at the end.

For disjoint point clouds $X$, $Y$ and $r, s > 0$, let $\check{C}((X,r)\cup(Y,s))$ denote the nerve
\[
\check{C}((X,r)\cup(Y,s)) = \mathcal{N}(\{B(x,r)\}_{x\in X}\cup \{B(y,s)\}_{y\in Y}),
\]
which contains the \v{C}ech complexes $\check{C}(X,r)$ and $\check{C}(Y,s)$ as subcomplexes.
The Nerve Theorem gives a homotopy equivalence
\[
\check{C}((X,r)\cup(Y,s)) \simeq \mathcal U(X,r) \cup \mathcal U(Y,s).
\]

By the excision isomorphism,\footnote{Strictly speaking, in the application of the excision isomorphism, we need to use the fact that $\mathcal U(X;r) \cup \mathcal U(Y;s)$ is a deformation retract of a slightly bigger set $\mathcal U(X;r+\delta)\cup \mathcal U(Y;s+\delta)$, which we may assume, since $X$ and $Y$ are finite and the balls are Euclidean.} we have
\[
H_\ast(\mathcal U(X;r),\mathcal U(X;r)\cap \mathcal U(Y;s))\cong H_\ast(\mathcal U(X;r)\cup\mathcal U(Y;s),\,\mathcal U(Y;s)).
\]

{
We next identify this union pair with a \v{C}ech pair:
\[
H_\ast\big(\mathcal U(X;r)\cup \mathcal U(Y;s),\, \mathcal{U}(Y;s)\big) \cong H_\ast\big(\check{C}((X;r)\cup(Y;s)),\,\check{C}(Y;s)\big).
\]
Indeed, by the functorial Nerve theorem \citep[Lemma 5.1]{virk2024dowkerdual} the square
\[
\xymatrix{
\mathcal U(Y;s) \ar[r]^{i_{Y}} \ar[d] & \check{C}(Y;s) \ar[d] \\
\mathcal U(X;r)\cup \mathcal U(Y;s) \ar[r]^{i_{X\cup Y}} & \check{C}((X;r)\cup(Y;s)),
}
\]
commutes up to homotopy, where $i_Y$ and $i_{X\cup Y}$ are the homotopy equivalences of the Nerve Theorem and the vertical arrows are inclusions.
Passing to homology yields a ladder of homomorphisms between the long exact sequences of the pairs $(\mathcal U(X;r)\cup\mathcal U(Y;s),\,\mathcal U(Y;s))$ and $(\check{C}((X;r)\cup(Y;s)),\,\check{C}(Y;s))$; since the two nerve maps are isomorphisms on homology, the Five Lemma yields the desired relative isomorphism. These isomorphisms are natural in $(r,s)$---which is what transports the interleaving to the module of Theorem~\ref{thm:relative_stability}---being supplied by the same functorial Nerve theorem \citep[Lemma~5.1]{virk2024dowkerdual}.

}

so they assemble into a two-parameter persistence module
\[
\operatorname{PH}_{\ast}(X\cup Y,Y) \;:=\; \big\{H_\ast\big(\mathcal U(X;r)\cup \mathcal U(Y;s),\,\mathcal U(Y;s)\big) \cong H_\ast\big(\check{C}((X;r)\cup (Y;s)),\,\check{C}(Y;s)\big) \;\big|\; r,s > 0\big\}.
\]
It remains to establish the interleaving bound of Theorem~\ref{thm:relative_stability}.
\begin{proof}[Proof of Theorem~\ref{thm:relative_stability}]
Let $\varepsilon = \max(d_{\mathrm H}(X,Z), d_{\mathrm H}(Y,W))$, and choose maps $f:X\to Z$, $g:Y\to W$, $h:Z\to X$, $k:W\to Y$ with
\[
\max\{ \Vert x-f(x) \Vert, \Vert y-g(y)\Vert, \Vert z-h(z)\Vert, \Vert w-k(w)\Vert \mid x \in X, y\in Y, z\in Z, w\in W \} \leq \varepsilon 
\]
Define $F_{r,s}:\check{C}((X;r)\cup (Y;s)) \to \check{C}((Z;r+\varepsilon)\cup (W;s+\varepsilon))$
on vertices by
\[
F_{r,s}(x) = f(x), \quad F_{r,s}(y) = g(y), \quad x \in X, \ y\in Y.
\]
If, for a point $p\in \mathbb R^d$, $\Vert p-x \Vert \leq r$ and
$\Vert p-y \Vert \leq s$, then we see
\[
\Vert p-f(x) \Vert \leq r+\varepsilon, \quad \Vert p-g(y)\Vert \leq s+\varepsilon,
\]
which implies that $F_{r,s}:X\cup Y \to Z\cup W$ actually defines a simplicial map
$F_{r,s}:\check{C}((X;r)\cup(Y;s)) \to \check{C}((Z;r+\varepsilon)\cup (W;s+\varepsilon))$.
The same applies to the map $Z\cup W \to X\cup Y$ induced by $h$ and $k$ and we obtain a simplicial map
\[
G_{r,s}:\check{C}((Z,r)\cup(W,s)) \to \check{C}((X,r+\varepsilon
)\cup (Y;s+\varepsilon)).
\]
The composition
\[
G_{r+\varepsilon,s+\varepsilon}\circ F_{r,s}:\check{C}((X;r)\cup(Y;s)) \to \check{C}((X;r+2\varepsilon)\cup(Y;s+2\varepsilon))
\]
is homotopic to the inclusion $\iota_{r,s;\varepsilon}: \check{C}((X;r)\cup(Y;s)) \hookrightarrow \check{C}((X;r+2\varepsilon)\cup(Y;s+2\varepsilon))$, because
for each simplex $[z_{0},\ldots, z_{n}]$ of $\check{C}((X;r)\cup(Y;s))$, we see by a similar estimate to the above that
\[
[z_{0},\ldots, z_{n}, G_{r+\varepsilon,s+\varepsilon}\circ F_{r,s}(z_{0}), \ldots, G_{r+\varepsilon,s+\varepsilon}\circ F_{r,s}(z_{n})]
\]
is a simplex of $\check{C}((X;r+2\varepsilon)\cup(Y;s+2\varepsilon))$.

This homotopy
\[
G_{r+\varepsilon,s+\varepsilon} \circ F_{r,s} \simeq \iota_{r,s;\varepsilon}
\]
restricts to a homotopy
\[
(G_{r+\varepsilon,s+\varepsilon}|\check{C}(W;s+\varepsilon)) \circ (F_{r,s}|\check{C}(Y;s))  \simeq \iota_{r,s;\varepsilon}|\check{C}(Y;s):\check{C}(Y;s) \hookrightarrow \check{C}(Y;s+2\varepsilon).
\]
Hence we obtain a homotopy of maps between pairs of spaces:
\[
G_{r+\varepsilon,s+\varepsilon} \circ F_{r,s} \simeq \iota_{r,s;\varepsilon}:(\check{C}((X;r)\cup (Y;s)),\check{C}(Y;s)) \to (\check{C}((X;r+2\varepsilon)\cup (Y;s+2\varepsilon)),\check{C}(Y;s+2\varepsilon)).
\]
Passing to relative homology gives the first half of the $\varepsilon$-interleaving; the other half is obtained symmetrically.

For $k$ clouds the same construction runs on the $k$-fold union complexes $\check{C}\big(\bigcup_j (X_j; t_j)\big)$: the vertex map $\bigsqcup_j f_j$ (each $f_j$ realizing $d_H(X_j, X'_j) \leq \varepsilon$) induces the interleaving simplicial maps, and these restrict to every sub-union $\bigcup_{j \neq i}(X_j; t_j)$, so the argument above gives an $\varepsilon$-interleaving of each leave-one-out summand $\big\{H_*\big(\bigcup_j \mathcal{U}(X_j; t_j),\, \bigcup_{j \neq i}\mathcal{U}(X_j; t_j)\big)\big\}_{\mathbf{t}}$.
The direct sum over $i$ is the $\varepsilon$-interleaving of $\mathrm{RPH}_*$ asserted in Theorem~\ref{thm:relative_stability}.
\end{proof}

\section{Consistency under Sampling}\label{sec:consistency}

A shipped invariant must recover the truth as the sample refines---property~(P2).
We record two complementary guarantees.
The first is qualitative and fully general: for arbitrary compact sets $A, B \subset \R^d$, the (relative) \v{C}ech (co)homology---hence the Betti numbers and Euler characteristic---of $A$, $B$, and the union pair $(A \cup B, B)$ is recovered in the limit from increasingly dense samples; this is the sampling-consistency companion to the relative module of Section~\ref{sec:relative}.
The second is quantitative: under a positive-reach hypothesis and i.i.d.\ sampling, the Intersection ECP recovers the topological interaction of the underlying shapes---persistently at every regular scale, and exactly (pointwise) at scales regular for the sample---after finitely many samples.

\subsection{Qualitative recovery in the inverse limit}\label{sec:consistency_qualitative}
For a compact subset $A$ of $\mathbb R^d$ and 
two sequences $\{r_i\}$ and $\{\delta_i \}$ of positive numbers with $\lim_i r_i = \lim _i \delta_i = 0$, \cite{kawamura} constructs a sequence of finite samples $\{X_i\}$ with $d_{\mathrm H}(X_{i},A) < \delta_{i}$ and of simplicial maps $\{f_{i}^{A}:\check{C}(X_{i+1};r_{i+1}) \to \check{C}(X_{i};r_{i})\}$
such that the inverse sequence
$\{ \check{C}(X_{i};r_{i}), f_{i}^{A}:\check{C}(X_{i+1})\to \check{C}(X_{i})\mid i\geq 1\}$ 
induces isomorphisms
\[
\check{H}_{\ast}(A) \cong \varprojlim \{H_\ast(\check{C}(X_{i};r_{i})); (f_{i}^{A})_\ast \mid i\geq 1 \},\quad
\check{H}^{\ast}(A) \cong \varinjlim \{H^\ast (\check{C}(X_{i};r_{i})); (f_{i}^A)^\ast \mid i\geq 1\}.
\]
The map $f_{i}^{A}:\check{C}(X_{i+1};r_{i+1}) \to \check{C}(X_{i};r_{i})$ is defined subject to the condition on the set $X_{i+1}$ of the vertices of $\check{C} (X_{i+1};r_{i+1})$:
$
\Vert f_{i}^{A}(x_{i+1}) - x_{i+1}\Vert < \delta_i \quad \forall x_{i+1} \in X_{i+1}.
$\\
By an appropriate choice of $r_i$, $\delta_i$ and $X_i$, the map $f_i^A$ is indeed a simplicial map and also we obtain the inclusion $\mathcal{U}(X_{i+1};r_{i+1}) \subset \mathcal{U}(X_{i},r_{i})$ for each $i$.
Furthermore we have the commutative diagrams below by an application of the functorial Nerve theorem 
\citep[Lemma 5.1]{virk2024dowkerdual} (see \cite[Lemma 2.2]{kawamura}):
\[
\xymatrix{
H_\ast(\check{C}(X_{i+1};r_{i+1})) \ar[d]_{(f_{i}^{A})_\ast} & H_\ast(\mathcal U(X_{i+1};r_{i+1})) \ar[l]_{(\iota_{i+1})_\ast} \ar[d]^{(\ell_{i}^{A})_\ast}
&
H^\ast(\check{C}(X_{i+1};r_{i+1})) \ar[r] ^{(\iota_{i+1})^\ast} & H^\ast(\mathcal U(X_{i+1};r_{i+1}))  
\\
H_\ast(\check{C}(X_{i};r_{i})) & H_\ast(\mathcal U(X_{i};r_{i}))  \ar[l]^{(\iota_{i})_{\ast}},
&
H^\ast(\check{C}(X_{i};r_{i})) \ar[u]^{(f_{i}^{A})^\ast} \ar[r]_{(\iota_{i})^\ast} & H^{\ast}(\mathcal U(X_{i};r_{i})) \ar[u]_{(\ell_{i}^{A})^\ast}
}
\]
Where $\iota_{i}$ and $\iota_{i+1}$ are homotopy equivalences of the Nerve theorem and $\ell_i^A$ is the inclusion.
%
These give isomorphisms
\[
\check{H}_{\ast}(A) \cong \varprojlim \{H_\ast(\mathcal{U}(X_{i};r_{i})); (\ell_{i}^{A})_\ast \mid i\geq 1 \},\quad
\check{H}^{\ast}(A) \cong \varinjlim \{H^\ast (\mathcal{U}(X_{i};r_{i})); (\ell_{i}^A)^\ast \mid i\geq 1\}.
\]

Likewise, for two compact subsets $A$ and $B$, we can construct a simplicial map
$\displaystyle
f_{i}^{A,B}: \check{C}( (X_{i+1};r_{i+1})\cup (Y_{i+1};s_{i+1})) \to 
\check{C}( (X_{i};r_{i})\cup (Y_{i};s_{i}))
$
in such a way that
$f_{i}^{A,B}(\check{C}(Y_{i+1};s_{i+1})) \subset \check{C}(Y_{i};s_{i})$ by recalling the defining condition of the map above. Thus $f_i^{A,B}$ is a map between pairs of spaces: 
\[
f_{i}^{A,B}:(\check{C}( (X_{i+1};r_{i+1})\cup (Y_{i+1};s_{i+1})), \check{C}(Y_{i+1};s_{i+1})) \to
(\check{C}( (X_{i};r_{i})\cup (Y_{i};s_{i})), \check{C}(Y_{i};s_{i})).
\]
Writing the inclusion
\[
(\mathcal{U}(X_{i+1};r_{i+1})\cup \mathcal{U}(Y_{i+1};s_{i+1}), \mathcal{U}(Y_{i+1};s_{i+1})) \hookrightarrow
(\mathcal{U}(X_{i};r_{i})\cup \mathcal{U}(Y_{i};s_{i}), \mathcal{U}(Y_{i};s_{i}))
\]
as $\ell_i^{A,B}$, we obtain two inverse sequences of pairs of spaces 
\begin{equation}\label{eq:invseq}
\begin{array}{ll}
\{  (\check{C}( (X_{i};r_{i})\cup (Y_{i};s_{i})), \check{C}(Y_{i};s_{i})),~ f_{i}^{A,B}\mid i\geq 1\}, ~\mbox{and} \\
\{ (\mathcal{U}(X_{i};r_{i})\cup \mathcal{U}(Y_{i};s_{i}), \mathcal{U}(Y_{i};s_{i})),~ \ell_{i}^{A,B}\mid i\geq 1\}
\end{array}
\end{equation}
which give isomorphisms \footnote{Here we use the fact that the inverse sequence (\ref{eq:invseq}) is an $\mathrm{HPOL}$-expansion of the compact pair $(A\cup B, B)$ 
\cite[Chap.I, Section 4]{mardesicsegalshape}. For cohomology the isomorphism also follows from the long exact sequence of pairs, Five lemma and the previous isomorphisms for absolute cohomology.}
\begin{eqnarray*}
\check{H}_{\ast}(A,A\cap B) &\cong&
\check{H}_{\ast}(A\cup B,B) \\
&\cong&
\varprojlim \{H_\ast(\check{C}( (X_{i};r_{i})\cup (Y_{i};s_{i})), \check{C}(Y_{i};s_{i})); (f_{i}^{A,B})_\ast \mid i\geq 1 \}\\
&\cong& 
\varprojlim \{H_\ast(\mathcal{U}(X_{i};r_{i})\cup \mathcal{U}(Y_{i};s_{i}), \mathcal{U}(Y_{i};s_{i})); (\ell_{i}^{A,B})_\ast \mid i\geq 1 \},\\
\check{H}^{\ast}(A,A\cap B) &\cong&
\check{H}^{\ast}(A\cup B,B), \\
&\cong&
\varinjlim \{H^\ast(\check{C}( (X_{i};r_{i})\cup (Y_{i};s_{i})), \check{C}(Y_{i};s_{i})); (f_{i}^{A,B})^\ast \mid i\geq 1 \} \\
&\cong&
\varinjlim \{H^\ast(\mathcal{U}(X_{i};r_{i})\cup \mathcal{U}(Y_{i};s_{i}), \mathcal{U}(Y_{i};s_{i})); (\ell_{i}^{A,B})^\ast \mid i\geq 1 \}
.
\end{eqnarray*}
The Euler characteristics and the Betti numbers of $A, B$ and the pair $(A\cup B, B)$ are also recovered from those of $\mathcal U(X_{i};r_{i})$ and $\mathcal U(Y_{i};s_{i})$ on the basis of the following lemma.
{Here the Betti numbers and Euler characteristic of a general compact set are defined through its \v{C}ech cohomology---$\beta_q(A) = \dim_{\mathbb F} \check{H}^q(A; \mathbb F)$ and $\chi(A) = \sum_q (-1)^q \beta_q(A)$, well defined whenever these groups are finite-dimensional (as they are for compact sets of positive reach, by Lemma~\ref{lem:reach_anr}).}
Due to the existence of the long exact sequence of pairs of compact spaces, \v{C}ech cohomology groups are more workable than \v{C}ech homology groups for general compact subsets, and algebraic inductive limits are easier to handle than algebraic projective limits; for this reason the lemma is stated for cohomology.
A similar consideration has been made in \cite{robins1999}.

Let $\{V_{i},f_{i}:V_{i}\to V_{i+1}\mid i\geq 1\}$
be an inductive sequence of $\mathbb F$-vector spaces and linear maps with $\dim V_{i} < \infty$ for each $i$, and let $V_{\infty} = \varinjlim\{V_{i}; f_{i}\mid i\geq 1\}$.
For $i\leq j,$ let $f_{ji}:V_{i}\to V_{j}$ be the composition
\[
f_{j-1}\circ \cdots \circ f_{i}:V_{i}\to V_{j}.
\]
For an arbitrarily fixed $n\geq 1$, the sequence
\[
\{ \dim f_{in}(V_{n})\mid i\geq n\}
\]
is non-increasing and let $\rho_{n} = \lim_{i\to\infty}\dim f_{in}(V_{n}) = \min_{i\geq n} \dim f_{in}(V_{n})$.
The sequence $\{\rho_{n}\mid n\geq 1\}$ is non-decreasing; set
\[
\rho_{\infty} = \lim_{n\to\infty}\rho_n = \sup_{n}\rho_n  \in [0,\infty].
\]

\begin{lemma}[Dimension of an inductive limit]\label{lem:inductive_limit}
With the notation above, if $\rho_{\infty} < \infty$ then $V_\infty$ is finite-dimensional and $\dim V_\infty = \rho_\infty$.
\end{lemma}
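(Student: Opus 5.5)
The plan is to realize $V_\infty$ concretely as the increasing union of the images of the $V_n$, and to show that the dimension of each image stabilizes at $\rho_n$. Let $\phi_n : V_n \to V_\infty$ denote the canonical maps. Every element of $V_\infty$ is $\phi_n(v)$ for some $n$ and $v \in V_n$. Moreover $\phi_n(v) = 0$ if and only if $f_{in}(v) = 0$ for some $i \geq n$. Consequently
\[
V_\infty = \bigcup_n \phi_n(V_n), \qquad \phi_n(V_n) \subseteq \phi_{n+1}(V_{n+1}),
\]
the inclusion holding because $\phi_n = \phi_{n+1} \circ f_n$.

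\emph{Step 1: $\dim \phi_n(V_n) = \rho_n$.} We have $\phi_n(V_n) \cong V_n/\ker\phi_n$ and $\ker \phi_n = \bigcup_{i \geq n} \ker f_{in}$. The kernels $\ker f_{in}$ form an increasing chain of subspaces of the finite-dimensional space $V_n$, so the chain stabilizes at some index $i_0$. Hence $\ker\phi_n = \ker f_{i_0 n}$, and therefore
\[
\dim \phi_n(V_n) = \dim V_n - \dim \ker f_{i_0 n} = \dim f_{i_0 n}(V_n).
\]
Since $\dim f_{in}(V_n)$ is non-increasing in $i$ and becomes constant exactly when the kernels stabilize, this common value is $\min_{i\ge n}\dim f_{in}(V_n) = \rho_n$. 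This also confirms that $\rho_n$ is non-decreasing in $n$, because the images $\phi_n(V_n)$ are nested.

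\emph{Step 2: passing to the union.} The subspaces $W_n := \phi_n(V_n)$ form an increasing chain with $\dim W_n = \rho_n \le \rho_\infty < \infty$. The sequence of integers $\rho_n$ is non-decreasing and bounded, so it is eventually constant, equal to $\rho_\infty$, from some index $N$ on. For $n \geq N$ the inclusion $W_N \subseteq W_n$ is then an inclusion of spaces of equal finite dimension, hence an equality. It follows that $V_\infty = \bigcup_n W_n = W_N$, which has dimension $\rho_\infty$.

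The main point requiring care is Step 1, namely the identification $\ker\phi_n = \bigcup_i \ker f_{in}$ from the standard construction of the algebraic direct limit, together with the stabilization argument. This is where the finite-dimensionality of each $V_i$ is used. Everything after that is a monotone bounded integer sequence argument.
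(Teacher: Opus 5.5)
Your proof is correct, and it is organized differently from the paper's. The paper proves two separate inequalities. For $\dim V_\infty \le \rho_\infty$ it lifts $d$ linearly independent vectors of $V_\infty$ to a common $V_n$ with $n > N$ and notes that their images stay independent in every $f_{mn}(V_n)$, which forces $\rho_n \ge d$. For $\dim V_\infty \ge \rho_\infty$ it shows that once $\dim f_{iN}(V_N)$ has stabilized, each $f_i$ restricts to an isomorphism $f_{iN}(V_N) \to f_{i+1,N}(V_N)$, so $f_{IN}(V_N)$ embeds in the limit.

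You instead prove the exact identity $\dim \phi_n(V_n) = \rho_n$ for every $n$, by stabilizing the ascending chain $\ker f_{in}$. This is the kernel-side form of the paper's image-side stabilization. It is also the fact the paper uses implicitly when it says ``embeds into the limit'', since that claim needs the description of $\ker\phi_I$ as the union of the $\ker f_{iI}$. You then replace the lifting argument with the exhaustion $V_\infty = \bigcup_n \phi_n(V_n)$ of a nested chain whose dimensions stabilize.

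Your route gains three things:
\begin{enumerate}
\item An explicit identification $V_\infty = \phi_N(V_N) \cong f_{i_0N}(V_N)$ with a single stable image.
\item A one-line reason why $\rho_n$ is non-decreasing, which the paper only asserts in the setup.
\item The converse at no extra cost: if $\rho_\infty = \infty$, then $V_\infty$ contains the subspaces $\phi_n(V_n)$ of unbounded dimension. Hence $V_\infty$ is finite-dimensional if and only if $\rho_\infty < \infty$, and $\dim V_\infty = \rho_\infty$ holds in $[0,\infty]$.
\end{enumerate}
The paper's two-inequality argument is more hands-on and never writes down $\ker\phi_n$ explicitly, but it relies on the same two facts about the algebraic direct limit that you state up front.
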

\begin{proof}
From the assumption, there exist an integer $N$ such that $\rho_{\infty} = \rho_{n} = \rho_{N} <\infty$ for each $n\geq N.$

Let $\omega_{1}, \ldots, \omega_{d}$ be linearly independent vectors of $V_\infty$.
Take an integer $n>N$ and vectors $v_{n,1},\ldots, v_{n,d}$ of $V_n$ such that
\[
f_{\infty n}(v_{n,i}) = \omega_i
\]
for each $i$.
For each $m\geq n$, we have
$f_{\infty m}(f_{m n}(v_{n,i})) = \omega_i $ for each $i$, which implies that the vectors $f_{mn}(v_{n,1}),\ldots, f_{mn}(v_{n,d})$ are linearly independent in $f_{mn}(V_{n})$.
Hence we have
$\dim f_{mn}(V_{n}) \geq d$ for each $m$ and thus
\[
\infty > \rho_{\infty} = \rho_{n} = \lim_{m}\dim f_{mn}(V_n ) \geq d.
\]
This implies that $\dim V_\infty$ is finite and 
$\dim V_{\infty} \leq \rho_\infty$.

To prove the reverse inequality, take an integer $I\geq N$ such that, for each $i\geq I$, we have
\[
\dim f_{iN}(V_{N}) = \dim f_{IN}(V_{N}) = \rho_{N} = \rho_{\infty}.
\]
For each $i\geq I$, it readily follows from the definition that the restriction
\[
f_{i}|f_{i~N}(V_{N}) : f_{i~N}(V_{N})\to f_{i+1~N}(V_N) 
\]
is a surjection between vector spaces of the same finite dimension, and hence is an isomorphism.
Hence the vector space $f_{I~N}(V_{N})$ embeds into the limit $V_\infty$ and hence
\[
\rho_{\infty} = \rho_{N} = \dim f_{I~N}(V_{N}) \leq \dim V_{\infty} .
\]
Together with $\dim V_\infty \leq \rho_\infty$ this gives $\dim V_\infty = \rho_\infty$.
\end{proof}

Applying the above to the compact subsets $A$ and $B$ at the beginning of this section, we obtain the following:

\begin{corollary}[Recovery of Betti numbers in the limit]\label{cor:cech_recovery}
For compact subsets $A$ and $B$ of $\mathbb R^d$ and cohomology over the fixed field $\mathbb F$, there are sequences of positive numbers $(r_i ), (s_ i)$, point clouds $(X_{i}), (Y_i )$ with $X_i \subset A$, $Y_i \subset B$, with inclusions
$\ell_{i}^{A}:\mathcal U(X_{i+1}; r_{i+1})
\hookrightarrow
\mathcal U(X_{i}, r_{i})
$
and 
$\ell_{i}^{A,B}:(\mathcal U(X_{i+1}; r_{i+1})\cup \mathcal{U}(Y_{i+1};s_{i+1}), \mathcal{U}(Y_{i+1};s_{i+1})) \hookrightarrow (\mathcal U(X_{i}; r_{i})\cup \mathcal{U}(Y_{i};s_{i}), \mathcal{U}(Y_{i};s_{i}))$ 
%
such that
\[
\begin{array}{ll}
\dim \check{H}^{q}(A) =
\lim_{n\to \infty}(\lim_{i\geq n}
\dim \operatorname{Im}(\ell_{ni}^{A})^{\ast}),   \\
\dim \check{H}^{q}(A,A\cap B) = 
\dim \check{H}^{q}(A\cup B,B) =
\lim_{n\to \infty}(\lim_{i\geq n}
\dim \operatorname{Im}(\ell_{ni}^{A,B})^{\ast}),
\end{array}
\]
if the right hand sides of the above are finite.
\end{corollary}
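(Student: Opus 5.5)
The corollary follows from Lemma~\ref{lem:inductive_limit} applied to the inductive sequences of cohomology groups produced by the inverse-limit construction of \cite{kawamura} recalled above. I would take the sequences $(r_i)$, $(\delta_i)$, the samples $X_i \subset A$ and $Y_i \subset B$ with $d_{\mathrm H}(X_i,A)<\delta_i$ and $d_{\mathrm H}(Y_i,B)<\delta_i$, and the simplicial maps $f_i^A$ and $f_i^{A,B}$ exactly as in the preceding discussion. That discussion already chose the parameters so that the offsets are nested, $\mathcal U(X_{i+1};r_{i+1})\subset\mathcal U(X_i;r_i)$, and likewise for the $Y_i$ and the union pairs. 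This supplies the inclusions $\ell_i^A$ and $\ell_i^{A,B}$ required in the statement. Because each $\mathcal U(X_i;r_i)$ is homotopy equivalent, through the Nerve Theorem, to a finite complex, every term $V_i = H^q(\mathcal U(X_i;r_i);\mathbb F)$ is a finite-dimensional vector space. The same holds for the relative terms $H^q(\mathcal U(X_i;r_i)\cup\mathcal U(Y_i;s_i),\,\mathcal U(Y_i;s_i))$, by the Five Lemma applied to the long exact sequence of the pair.

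For the absolute statement, set $V_i = H^q(\mathcal U(X_i;r_i))$ and let the bonding maps be $(\ell_i^A)^\ast : V_i \to V_{i+1}$. The direct-limit identification recalled above, which comes from the commuting nerve squares, gives $\check H^q(A)\cong\varinjlim\{V_i;(\ell_i^A)^\ast\}$. In the notation of Lemma~\ref{lem:inductive_limit}, the composite $f_{in}$ is $(\ell_{ni}^A)^\ast$, so $\rho_n=\lim_{i\ge n}\dim\operatorname{Im}(\ell_{ni}^A)^\ast$ and the right-hand side of the corollary is $\rho_\infty$. Under the finiteness hypothesis the lemma yields $\dim\check H^q(A)=\dim V_\infty=\rho_\infty$. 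The relative statement runs the same way with $V_i$ replaced by the relative cohomology of the union pairs and the bonding maps by $(\ell_i^{A,B})^\ast$. For this I would invoke the $\mathrm{HPOL}$-expansion identification $\check H^\ast(A\cup B,B)\cong\varinjlim$ from the footnote, together with the excision isomorphism $\check H^\ast(A,A\cap B)\cong\check H^\ast(A\cup B,B)$ for compact pairs in \v{C}ech cohomology.

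The main point needing care is the index convention and the direction of the maps. Cohomology reverses the inclusions $\ell_{ni}$, which run from stage $i$ to stage $n$ with $i\ge n$, into maps from $V_n$ to $V_i$. I would check that $\operatorname{Im}(\ell_{ni})^\ast$ really equals $f_{in}(V_n)$ in the lemma's notation, so that the non-increasing and non-decreasing monotonicity used to define $\rho_n$ and $\rho_\infty$ applies verbatim. A second check is that the direct-limit isomorphisms are compatible with the bonding maps, so that the algebraic limit is the \v{C}ech group itself and not merely an abstractly isomorphic space. This compatibility comes from naturality of the functorial Nerve theorem \citep[Lemma 5.1]{virk2024dowkerdual}, which was already used to build the commuting diagrams. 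I expect no further obstacle, since finiteness of the limit is part of the hypothesis and is handed directly to Lemma~\ref{lem:inductive_limit}.
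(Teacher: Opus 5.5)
Your proposal is correct and follows the paper's route: the paper obtains the corollary simply by feeding the direct-limit identifications $\check H^\ast(A)\cong\varinjlim\{H^\ast(\mathcal U(X_i;r_i));(\ell_i^A)^\ast\}$ and $\check H^\ast(A,A\cap B)\cong\check H^\ast(A\cup B,B)\cong\varinjlim\{\ldots;(\ell_i^{A,B})^\ast\}$ into Lemma~\ref{lem:inductive_limit}, where $\rho_\infty$ is the iterated limit of image dimensions. Two minor points: finite-dimensionality of the relative terms comes from exactness of the pair sequence (or from the finite \v{C}ech pair), not from the Five Lemma itself; and for the dimension count an abstract isomorphism with the limit already suffices.
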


\bigskip

\subsection{Asymptotic consistency under bounded reach}\label{sec:asymptotic_consistency}

Section~\ref{sec:consistency_qualitative} recovers the (relative) homology of general compact sets qualitatively, in the inverse limit.
Under a positive-reach hypothesis one obtains more: the sample recovers the population interaction persistently, and---at scales regular for the sample filtration---the Intersection ECP equals it \emph{exactly} after finitely many samples.

{
\begin{lemma}[Positive reach gives finite-dimensional homology]\label{lem:reach_anr}
Let $A$ be a compact subset of $\R^d$ of positive reach $\tau > 0$. Then $A$ is a compact ANR, and its singular homology groups are finite-dimensional in every degree.
\end{lemma}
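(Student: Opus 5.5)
The plan is to exhibit $A$ as a retract of an open subset of $\R^d$, and then exploit that the retraction is a homotopy equivalence onto an offset that is a finite union of convex pieces.

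Fix $0<\rho<\tau$ and consider the open tubular neighborhood $U_\rho=\{x: d(x,A)<\rho\}$. By Federer's characterization of reach, every $x\in U_\rho$ has a unique nearest point $\pi_A(x)\in A$, and $\pi_A$ is continuous (indeed Lipschitz with constant $\tau/(\tau-\rho)$) on $U_\rho$. Thus $\pi_A:U_\rho\to A$ is a retraction of an open subset of $\R^d$ onto $A$. Open subsets of $\R^d$ are ANRs, and a retract of an ANR is an ANR, so $A$, being compact, is a compact ANR.

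The straight-line homotopy $H(x,u)=(1-u)x+u\,\pi_A(x)$ stays in $U_\rho$, since $d(H(x,u),A)\le (1-u)\|x-\pi_A(x)\|<\rho$. Hence $A\hookrightarrow U_\rho$ is a homotopy equivalence, as already used in Theorem~\ref{thm:discrimination}(3). The same holds for the closed offset $\mathcal{U}(A;\rho')$ with $\rho'<\tau$.

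For finiteness, it suffices to show that some offset $\mathcal{U}(A;\rho')$ has finite-dimensional homology. Choose a finite $\delta$-net $P\subset A$ (which exists by compactness) with $\delta$ small relative to $\tau$. The interleaving argument of Theorem~\ref{thm:discrimination}(2) gives
\[
\mathcal{U}(A;\rho_1)\subseteq\mathcal{U}(P;\rho_1+\delta)\subseteq\mathcal{U}(A;\rho_1+\delta)\subseteq\mathcal{U}(P;\rho_1+2\delta)\subseteq\mathcal{U}(A;\rho_1+2\delta),
\]
with $\rho_1+2\delta<\tau$. Because the offsets of $A$ have no critical values below $\tau$, the composite maps on homology $H_*(\mathcal{U}(A;\rho_1))\to H_*(\mathcal{U}(A;\rho_1+\delta))$ and its successor are isomorphisms. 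Consequently $H_*(A)\cong H_*(\mathcal{U}(A;\rho_1))$ injects into $H_*(\mathcal{U}(P;\rho_1+\delta))$, which is the homology of the nerve of finitely many balls and hence finite-dimensional. An alternative route avoids nets entirely: a compact ANR is dominated by a finite CW complex (West's theorem), so its homology is finitely generated. I would cite this as a fallback.

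The main obstacle is the continuity of $\pi_A$ on the whole open tube. This is not derived earlier in the paper, so I would cite Federer's theorem (Curvature measures, Thm.~4.8) rather than reprove it. I would also note that \v{C}ech and singular homology coincide for ANRs, which justifies the \v{C}ech-based definitions of Betti numbers stated just before the lemma.
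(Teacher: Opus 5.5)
Your proof is correct. The ANR half is the same as the paper's: the open $\rho$-tube is an ANR, and the nearest-point map retracts it onto $A$.

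You diverge on finiteness. The paper simply invokes West's theorem that a compact ANR has the homotopy type of a compact polyhedron, which is your fallback. Your primary route sandwiches the offsets of $A$ between offsets of a finite $\delta$-net $P\subseteq A$. You then use that offset inclusions below $\tau$ are homotopy equivalences, by your straight-line deformation retraction and two-out-of-three. Hence $H_*(A)\cong H_*(\mathcal{U}(A;\rho_1))$ injects into the homology of the finite ball union $\mathcal{U}(P;\rho_1+\delta)$.

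Only that injectivity is needed, so $\rho_1+\delta<\tau$ suffices and the second isomorphism is not used. Keeping it does sharpen the result: it identifies $H_*(A)$ exactly with the image of $H_*(\mathcal{U}(P;\rho_1+\delta))\to H_*(\mathcal{U}(P;\rho_1+2\delta))$. This holds because the first isomorphism makes the middle map onto $H_*(\mathcal{U}(A;\rho_1+\delta))$ surjective and the second makes the next map injective.

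What your route buys:
\begin{itemize}
\item It stays within the paper's own sandwich-and-nerve toolkit.
\item It does not use the ANR property at all for finiteness; only Federer's continuity of $\pi_A$ on the tube is needed.
\item It bounds the Betti numbers of $A$ by those of a nerve of finitely many balls, indeed by a computable persistent image. This anticipates the persistent-recovery argument of Theorem~\ref{thm:consistency}.
\end{itemize}
What the paper's route buys is the stronger structural conclusion that $A$ is homotopy equivalent to a finite polyhedron, in one line, at the cost of a deep theorem.

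One minor attribution point about your fallback: domination of a compact ANR by a finite polyhedron is classical and predates West. West's theorem is the stronger homotopy-equivalence statement, which of course implies domination.
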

\begin{proof}
Fix $\rho < \tau$ and let $V_\rho(A)$ be the open $\rho$-neighborhood of $A$; as an open subset of $\R^d$ it is an ANR. Since $\rho < \tau$, each $x \in V_\rho(A)$ has a unique nearest point $\pi(x) \in A$, and $\pi : V_\rho(A) \to A$ is a continuous retraction, so $A$---a retract of an ANR---is itself an ANR. Every compact ANR has the homotopy type of a compact polyhedron \citep{west}, so its homology is finite-dimensional in every degree.
\end{proof}
}

{
Before the sampling theorems we record the probabilistic input precisely: the following lemma is the covering-plus-Borel--Cantelli argument that turns i.i.d.\ sampling into almost-sure density.

Recall from Theorem~\ref{thm:geometric_bounds} that $\mathcal{N}(A, \epsilon)$ denotes the $\epsilon$-covering number of a compact set $A$.
\begin{lemma}[Almost-sure covering under i.i.d.\ sampling]\label{lem:sampling_dense}
Let $A \subset \R^d$ be compact and $\epsilon > 0$; put $M = \mathcal{N}(A, \epsilon)$ and fix an $\epsilon$-net $\{a_1, \ldots, a_M\} \subseteq A$ (so $A \subseteq \bigcup_j B(a_j, \epsilon)$). Let $\mu$ be a Borel probability measure on $A$ with $p_\epsilon := \min_j \mu\big(A \cap B(a_j, \epsilon)\big) > 0$, and let $x_1, x_2, \ldots \stackrel{\mathrm{iid}}{\sim} \mu$. Write $X_n = \{x_1, \ldots, x_n\}$ and let $\mathcal{C}_n$ be the event that $X_n \cap \big(A \cap B(a_j, \epsilon)\big) = \emptyset$ for some $j \in \{1, \ldots, M\}$. Then:
\begin{enumerate}
    \item $\mathbb{P}[\mathcal{C}_n] \leq M(1-p_\epsilon)^n \leq M\exp\big(- n\,p_\epsilon\big)$ for every $n$;
    \item almost surely, $X_n$ is $2\epsilon$-dense in $A$ for all sufficiently large $n$.
\end{enumerate}
\end{lemma}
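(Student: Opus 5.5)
Part~(1) comes from a union bound over the $M$ net cells. Part~(2) then follows by Borel--Cantelli and a triangle-inequality step.

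For~(1), fix $j$ and write $D_j = A \cap B(a_j, \epsilon)$. Since the $x_m$ are i.i.d.\ with law $\mu$, the event that no sample lands in $D_j$ has probability $\big(1 - \mu(D_j)\big)^n \leq (1-p_\epsilon)^n$. The event $\mathcal{C}_n$ is the union of these $M$ events, so the union bound gives $\mathbb{P}[\mathcal{C}_n] \leq M(1-p_\epsilon)^n$. The elementary inequality $1 - p \leq e^{-p}$ then gives $M\exp(-n p_\epsilon)$. The cells $D_j$ are Borel, being intersections of a compact set with a closed ball, so every probability here is defined.

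For~(2), the bound $\sum_n M e^{-n p_\epsilon}$ is a convergent geometric series because $p_\epsilon > 0$. By the first Borel--Cantelli lemma, almost surely only finitely many $\mathcal{C}_n$ occur. No independence across $n$ is needed, even though the events are nested; in fact $\mathcal{C}_{n+1} \subseteq \mathcal{C}_n$, since $X_n \subseteq X_{n+1}$. Hence almost surely there is an $n_0$ with $\mathcal{C}_n$ failing for all $n \geq n_0$.

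On the complement of $\mathcal{C}_n$, every cell $D_j$ contains a sample point $x^{(j)} \in X_n$. Take any $a \in A$ and choose $j$ with $\|a - a_j\| \leq \epsilon$, using that the net covers $A$. Then
\[
\|a - x^{(j)}\| \leq \|a - a_j\| + \|a_j - x^{(j)}\| \leq 2\epsilon .
\]
Since also $X_n \subseteq A$, this gives $d_H(X_n, A) \leq 2\epsilon$, i.e.\ $X_n$ is $2\epsilon$-dense. The factor $2$ arises because the sample is known only to lie in the cell, not at its centre.

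There is no real obstacle. The one point to state carefully is that Borel--Cantelli (or simply the monotonicity of the $\mathcal{C}_n$) turns a per-$n$ probability bound into an almost-sure statement for all large $n$. One also needs the finite positive lower bound $p_\epsilon$, which is exactly what the hypothesis provides.
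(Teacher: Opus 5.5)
Your proposal is correct and follows the paper's proof essentially step for step: a per-cell independence bound plus a union bound for (1), then summability, Borel--Cantelli, and the triangle inequality through the net point for the $2\epsilon$-density in (2). Your side remark that the events $\mathcal{C}_n$ are decreasing, so that $\mathbb{P}[\bigcap_n \mathcal{C}_n] = \lim_n \mathbb{P}[\mathcal{C}_n] = 0$ already suffices, is a valid shortcut the paper does not use, but it changes nothing substantive.
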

\begin{proof}
(1) For each $j$, independence gives $\mathbb{P}\big[X_n \cap (A \cap B(a_j, \epsilon)) = \emptyset\big] = \big(1 - \mu(A \cap B(a_j, \epsilon))\big)^n \leq (1-p_\epsilon)^n$; a union bound over the $M$ caps gives $\mathbb{P}[\mathcal{C}_n] \leq M(1-p_\epsilon)^n$, and $(1-t)^n \leq e^{-nt}$ for $t \geq 0$ yields the second inequality.
(2) By (1), $\sum_{n} \mathbb{P}[\mathcal{C}_n] \leq M \sum_n (1-p_\epsilon)^n < \infty$, so by the Borel--Cantelli lemma $\mathbb{P}[\mathcal{C}_n \text{ occurs for infinitely many } n] = 0$: almost surely only finitely many $\mathcal{C}_n$ occur, i.e.\ there is a (random) $n_0$ with $X_n \cap (A \cap B(a_j, \epsilon)) \neq \emptyset$ for all $n \geq n_0$ and all $j$. On that event every net point $a_j$ lies within $\epsilon$ of a sample, and as $\{a_j\}$ is $\epsilon$-dense in $A$, $X_n$ is $2\epsilon$-dense in $A$.
\end{proof}

Applying the lemma along a sequence $\epsilon \to 0$ (a countable intersection of almost-sure events) gives $d_H(X_n, A) \to 0$ almost surely---the density input to Theorem~\ref{thm:consistency}. The finite-sample Theorem~\ref{thm:quantitative} instead uses the tail bound~(1) directly, at a fixed sample size and confidence level, which is why its guarantee carries an explicit failure probability rather than holding almost surely.
}

\paragraph{Standing hypotheses for the sampling theorems.}
For $i = 1, \ldots, k$, let $A_i \subset \R^d$ be compact of positive reach $\tau_i > 0$, set $\tau = \min_i \tau_i$, and let $\mu_i$ be a Borel probability measure with $\mathrm{supp}(\mu_i) = A_i$ assigning positive mass to every ball that meets $A_i$ (e.g.\ a density bounded below on $A_i$).
By Lemma~\ref{lem:reach_anr}, each $A_i$ is then a compact ANR with finite-dimensional singular homology, agreeing with \v{C}ech homology.
Assume moreover that $\rho \mapsto T_\rho := \bigcap_i \mathcal{U}(A_i; \rho)$ has finitely many homological critical values in $(0, \tau)$ and that each $T_\rho$ has finite-dimensional homology.
Both sampling theorems below are stated under these hypotheses.

\begin{theorem}[Asymptotic consistency]\label{thm:consistency}
Under the standing hypotheses above, draw $X_i = \{x_{i,1}, \ldots, x_{i,m_i}\} \stackrel{\mathrm{iid}}{\sim} \mu_i$ and write $M_\rho = \bigcap_i \mathcal{U}(X_i; \rho)$.
Fix $r \in (0, \tau)$ regular for $\rho \mapsto T_\rho$. Then a.s.\ $\epsilon_{\max} := \max_i d_H(X_i, A_i) \to 0$, and for all large $\min_i m_i$:
\begin{enumerate}
    \item \textbf{Persistent recovery (unconditional).} The sample recovers the population Betti numbers persistently:
    \[
        \operatorname{rank}\big(H_q(M_r) \to H_q(M_{r + \epsilon_{\max}})\big) = \beta_q(T_r) \quad\text{for every } q,
    \]
    so the $\epsilon_{\max}$-persistent Euler characteristic $\sum_q (-1)^q \operatorname{rank}(H_q(M_r) \to H_q(M_{r+\epsilon_{\max}}))$ equals $\chi(T_r)$.
    \item \textbf{Exact pointwise recovery (at regular scales).} If $r$ is \emph{additionally} a regular value of the sample filtration $\rho \mapsto M_\rho$ (no homological critical value in $[r,\, r + \epsilon_{\max}]$), then $\Delta\chi(r; X_1, \ldots, X_k) = \chi(T_r)$ exactly.
\end{enumerate}
\end{theorem}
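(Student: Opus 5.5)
The proof has three steps: obtain almost-sure density, derive the interleaving, then read off both recovery statements from the factorizations already used in Theorem~\ref{thm:discrimination}(2).

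\emph{Density.} Apply Lemma~\ref{lem:sampling_dense} to each $\mu_i$ along a sequence $\epsilon_\ell \to 0$. The standing hypothesis that $\mu_i$ charges every ball meeting $A_i$ gives $p_{\epsilon_\ell}>0$. Intersecting the countably many almost-sure events shows that $\epsilon_{\max}=\max_i d_H(X_i,A_i)\to 0$ almost surely as $\min_i m_i\to\infty$.

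\emph{Interleaving.} Because $X_i\subseteq A_i$, the argument of Theorem~\ref{thm:discrimination} gives the sandwich
\[
T_{\rho-\epsilon_{\max}}\subseteq M_\rho\subseteq T_\rho \qquad\text{for all }\rho\ge\epsilon_{\max}.
\]
Since $r$ is regular for $T$ and $T$ has only finitely many critical values in $(0,\tau)$, there is $\eta>0$ with no critical value of $T$ in $[r-\eta,r+\eta]$. I will restrict to the almost-sure tail where $\epsilon_{\max}<\eta$ and $\epsilon_{\max}<r$.

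\emph{Part (1).} Consider the chain
\[
T_{r-\epsilon}\subseteq M_r\subseteq T_r\subseteq M_{r+\epsilon}\subseteq T_{r+\epsilon},
\qquad \epsilon=\epsilon_{\max}.
\]
The map $H_q(M_r)\to H_q(M_{r+\epsilon})$ factors through $H_q(T_r)$. The composite $H_q(T_{r-\epsilon})\to H_q(T_r)$ is an isomorphism, and it factors through $H_q(M_r)$, so $H_q(M_r)\to H_q(T_r)$ is surjective. Likewise $H_q(T_r)\to H_q(T_{r+\epsilon})$ is an isomorphism factoring through $H_q(M_{r+\epsilon})$, so $H_q(T_r)\to H_q(M_{r+\epsilon})$ is injective. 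A surjection followed by an injection has rank equal to the dimension of the middle space, so the rank is $\beta_q(T_r)$. This is finite by the standing hypothesis. Taking the alternating sum over $q$ gives the persistent Euler characteristic $\chi(T_r)$; the sum is finite because homology vanishes above degree $d$.

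\emph{Part (2).} If in addition $M$ has no critical value in $[r,r+\epsilon]$, then $H_q(M_r)\to H_q(M_{r+\epsilon})$ is an isomorphism. Its rank is then $\beta_q(M_r)$, so $\beta_q(M_r)=\beta_q(T_r)$. Equivalently, $r$ is now a regular scale in the sense of Theorem~\ref{thm:discrimination}, and part~(2) of that theorem applies directly. The Intersection Theorem~\ref{thm:intersection} then gives $\Delta\chi(r)=\chi(M_r)=\chi(T_r)$.

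The main obstacle is the first step: turning the hypotheses into a single almost-sure event on which the window $[r-\epsilon_{\max},r+\epsilon_{\max}]$ eventually sits inside the $T$-regular neighbourhood. This needs the finiteness of the critical set, since regularity at an isolated point alone would not give a uniform $\eta$. It also needs care that the Hausdorff convergence is simultaneous across the $k$ clouds. Everything after that is the rank bookkeeping reused from Theorem~\ref{thm:discrimination}.
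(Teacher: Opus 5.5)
Your proposal is correct and follows the paper's proof essentially step for step. The steps are: almost-sure density from Lemma~\ref{lem:sampling_dense} along a sequence $\epsilon \to 0$; a $T$-critical-free window around $r$ that eventually contains $[r-\epsilon_{\max}, r+\epsilon_{\max}]$; the chain $T_{r-\epsilon}\subseteq M_r\subseteq T_r\subseteq M_{r+\epsilon}\subseteq T_{r+\epsilon}$, which gives a surjection followed by an injection for (1); and $M$-regularity, which upgrades the rank equality to $H_*(M_r)\cong H_*(T_r)$ for (2). You also make explicit that finiteness of the critical set is what supplies the window, which the paper uses only implicitly.
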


\begin{proof}
{$\epsilon_{\max} \to 0$ a.s.\ by Lemma~\ref{lem:sampling_dense}: each $\mu_i$ charges every ball meeting $A_i$, so the mass lower bound $p_\epsilon > 0$ holds, and the lemma makes $X_i$ almost surely eventually $2\epsilon$-dense for every fixed $\epsilon$; intersecting over a sequence $\epsilon \to 0$ gives $d_H(X_i, A_i) \to 0$ a.s., hence $\epsilon_{\max} = \max_i d_H(X_i, A_i) \to 0$ a.s.}

Fix $\delta > 0$ with $[r - \delta, r + \delta]$ free of critical values of $T$ ($r$ is $T$-regular). A.s.\ eventually $\epsilon := \epsilon_{\max} < \delta$; then $X_i \subseteq A_i$ gives the interleaving $T_{\rho - \epsilon} \subseteq M_\rho \subseteq T_\rho$ of Theorem~\ref{thm:discrimination}, so
\[
    T_{r - \epsilon} \;\subseteq\; M_r \;\subseteq\; T_r \;\subseteq\; M_{r + \epsilon} \;\subseteq\; T_{r + \epsilon},
\]
with $H_*(T_{r-\epsilon}) \xrightarrow{\cong} H_*(T_r) \xrightarrow{\cong} H_*(T_{r+\epsilon})$ induced by inclusion, since $T$ has no critical values on $[r-\delta, r+\delta] \supseteq [r-\epsilon, r+\epsilon]$ (recall $\epsilon < \delta$).
The first isomorphism factors through $H_*(M_r)$, so $H_*(M_r) \to H_*(T_r)$ is surjective; the second factors through $H_*(M_{r+\epsilon})$, so $H_*(T_r) \to H_*(M_{r+\epsilon})$ is injective.
The sample map $H_*(M_r) \to H_*(M_{r+\epsilon})$ equals (injective)$\,\circ\,$(surjection onto $H_*(T_r)$), hence has rank $\dim H_*(T_r) = \beta_*(T_r)$---this is~(1).
For~(2), $M$-regularity on $[r, r+\epsilon]$ makes $H_*(M_r) \to H_*(M_{r+\epsilon})$ itself an isomorphism, and (1)'s rank equality then forces $H_*(M_r) \cong H_*(T_r)$; Euler characteristics agree, and the Intersection Theorem gives $\Delta\chi(r) = \chi(M_r) = \chi(T_r)$, an exact integer equality.
\end{proof}

Thus $\Delta\chi$ consistently recovers the underlying interaction $\chi(T_r)$---persistently at every $T$-regular scale, and pointwise-exactly wherever the sample filtration is also regular, its integer range turning approximate stability into exact equality once the perturbation is small enough---so on finite windows it serves as a proxy for the true topological interaction, off a set of sample-artifact scales that the next subsection bounds.
Explicit finite-sample rates follow in Section~\ref{sec:sample_complexity}.

\subsection{Quantitative sample complexity}\label{sec:sample_complexity}

Theorem~\ref{thm:consistency} guarantees eventual exact recovery but not \emph{how many} samples suffice.
We give explicit bounds in the spirit of Niyogi, Smale, and Weinberger \citep{niyogi2008finding}, who showed that $O(\tau^{-k}\log(1/\delta))$ samples from a set of reach $\tau$ recover its homology with probability $\geq 1 - \delta$.
{Throughout, $\mathrm{Crit}(T)$ denotes the set of homological critical values of the population filtration $\rho \mapsto T_\rho = \bigcap_i \mathcal{U}(A_i; \rho)$---the scales at which the \v{C}ech cohomology of $T_\rho$ changes---finite by the standing hypotheses stated before Theorem~\ref{thm:consistency}.}

\begin{theorem}[Quantitative convergence]\label{thm:quantitative}
For each $i$, let $A_i \subset \R^d$ be compact with reach $\tau_i > 0$ and finite $k_i$-dimensional Hausdorff volume $\mathrm{vol}_{k_i}(A_i)$ (so that $\dim_H A_i = k_i$, the density hypothesis below forcing $\mathcal{H}^{k_i}(A_i) > 0$ as well), and let $\mu_i$ be a probability measure on $A_i$ with density bounded below by $f_i > 0$ with respect to the $k_i$-dimensional Hausdorff measure.
Draw $X_i \stackrel{\mathrm{iid}}{\sim} \mu_i$ of size $m_i$, write $M_\rho = \bigcap_i \mathcal{U}(X_i; \rho)$ and $T_\rho = \bigcap_i \mathcal{U}(A_i; \rho)$, set $\tau = \min_i \tau_i$, and assume (as in Theorem~\ref{thm:consistency}) that $\rho \mapsto T_\rho$ has finitely many critical values.
Fix $\delta > 0$ and a scale $r \in (0, \tau)$ regular for the population filtration, and put $\epsilon = \tfrac12 \min\big(r,\ \tau - r,\ \mathrm{dist}(r, \mathrm{Crit}(T))\big)$, so the window $[r-\epsilon, r+\epsilon]$ is free of population critical values.
If, for every $i$,
\begin{equation}\label{eq:m}
        m_i \;\geq\; \frac{2^{k_i}}{f_i\, c_{k_i}\, \epsilon^{k_i}} \left(k_i \log \frac{2}{\epsilon} + \log \frac{\mathrm{vol}_{k_i}(A_i)}{\omega_{k_i}\, \delta}\right),
\end{equation}
where $\omega_k$ is the volume of the unit $k$-ball and $c_{k_i} > 0$ is the dimensional cap-mass constant of \citet[Lemma~5.3]{niyogi2008finding}, then with probability $\geq 1 - k\delta$ the sample is $\epsilon$-dense, and on that event $\Delta\chi$ recovers the population interaction persistently,
\[
    \sum_q (-1)^q \operatorname{rank}\big(H_q(M_r) \to H_q(M_{r+\epsilon})\big) \;=\; \chi\big(\textstyle\bigcap_i \mathcal{U}(A_i; r)\big)
\]
(Theorem~\ref{thm:consistency}(1)), with pointwise-exact equality $\Delta\chi(r; X_1, \ldots, X_k) = \chi(\bigcap_i \mathcal{U}(A_i; r))$ whenever $r$ is in addition a regular value of the sample filtration.
\end{theorem}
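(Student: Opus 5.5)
The plan has two parts: a covering argument in the style of \citet{niyogi2008finding}, applied cloud by cloud, followed by the deterministic recovery argument of Theorem~\ref{thm:consistency}.

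\emph{Probabilistic step.} Fix $i$ and work at radius $\epsilon/2$ on $A_i$. I take a maximal $(\epsilon/2)$-separated subset $P_i \subseteq A_i$. The balls $B(p,\epsilon/4)$ with $p \in P_i$ are pairwise disjoint. By the cap-mass bound \citep[Lemma~5.3]{niyogi2008finding}, valid because $\epsilon/2 < \tau_i$, each cap satisfies $\mathcal{H}^{k_i}(A_i \cap B(p,\epsilon/4)) \geq c\,\epsilon^{k_i}$. Summing over the disjoint caps bounds $|P_i|$ by a multiple of $\mathrm{vol}_{k_i}(A_i)/(\omega_{k_i}\epsilon^{k_i})$, and this is the source of the $k_i\log(2/\epsilon)$ and $\log\mathrm{vol}_{k_i}(A_i)/\omega_{k_i}$ terms. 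The density bound then gives each cap $A_i\cap B(p,\epsilon/2)$ mass $p_{\epsilon} \geq f_i c_{k_i}(\epsilon/2)^{k_i}$.

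Lemma~\ref{lem:sampling_dense}(1) now bounds the probability that some cap is missed by $|P_i|\exp(-m_i p_\epsilon)$. Condition~\eqref{eq:m} is exactly what makes this at most $\delta$: solve $m_i \geq p_\epsilon^{-1}\log(|P_i|/\delta)$ and substitute the two estimates. When no cap is missed, every point of $A_i$ lies within $\epsilon/2$ of a net point, and that net point lies within $\epsilon/2$ of a sample, so $d_H(X_i,A_i)\le\epsilon$. A union bound over the $k$ clouds gives probability $\geq 1-k\delta$ that $\epsilon_{\max}\le\epsilon$.

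\emph{Deterministic step.} On the good event I reuse the proof of Theorem~\ref{thm:consistency} verbatim, with $\epsilon_{\max}$ replaced by $\epsilon$. Since $X_i\subseteq A_i$ and $d_H\le\epsilon$, the interleaving from the proof of Theorem~\ref{thm:discrimination} gives the chain $T_{r-\epsilon}\subseteq M_r\subseteq T_r\subseteq M_{r+\epsilon}\subseteq T_{r+\epsilon}$. The choice of $\epsilon$ ensures $\epsilon\le r/2$, so $r-\epsilon>0$, and $r+\epsilon<\tau$, so the window $[r-\epsilon,r+\epsilon]$ lies in $(0,\tau)$ and contains no point of $\mathrm{Crit}(T)$. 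Hence $H_*(T_{r-\epsilon})\to H_*(T_r)\to H_*(T_{r+\epsilon})$ are isomorphisms, which makes $H_*(M_r)\to H_*(T_r)$ surjective and $H_*(T_r)\to H_*(M_{r+\epsilon})$ injective. The rank of $H_q(M_r)\to H_q(M_{r+\epsilon})$ is therefore $\beta_q(T_r)$, and the $T_r$ are finite-dimensional by the standing hypotheses. Taking the alternating sum gives the persistent identity.

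If $r$ is also $M$-regular on $[r,r+\epsilon]$, the sample map is an isomorphism, so $H_*(M_r)\cong H_*(T_r)$. The Intersection Theorem then yields $\Delta\chi(r)=\chi(T_r)$.

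\emph{Main obstacle.} I expect the hard part to be matching the constants in~\eqref{eq:m} exactly: keeping the $2^{k_i}$ factor, the factor $2$ inside the logarithm, and the precise normalization of $c_{k_i}$ consistent with the cited lemma. The reach restriction on the cap bound also needs care, since it requires $\epsilon/2<\tau_i/2$, which holds because $\epsilon<\tau$. If the constants do not match exactly, I would absorb the discrepancy into $c_{k_i}$, since the statement only asks for it to be a dimensional constant. The topological part is a direct transcription of the earlier proof.
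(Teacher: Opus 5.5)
Your proposal follows the paper's proof. It uses a per-cloud cap covering with an exponential miss probability and union bounds over the caps and over the $k$ clouds. It then uses the factorization $H_*(M_r) \twoheadrightarrow H_*(T_r) \hookrightarrow H_*(M_{r+\epsilon})$ from Theorem~\ref{thm:consistency}(1), together with the regular-scale isomorphism for the pointwise statement.

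The only real difference is how the net is counted, and your version is the sound one. The paper takes a minimal $(\epsilon/2)$-covering and simply asserts $N_i \le \mathrm{vol}_{k_i}(A_i)/(\omega_{k_i}(\epsilon/2)^{k_i})$. But volume divided by ball volume is, up to curvature corrections, a \emph{lower} bound for a covering number. The assertion already fails for a flat square, which has infinite reach, because the thinnest disc covering of the plane has density above one. An upper bound therefore genuinely needs your packing step, with disjoint $(\epsilon/4)$-caps and the cap constant.

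Your worry about the constants is justified, and it is not a flaw in your argument. What your argument proves is the prefactor $2^{k_i}/(f_i c_{k_i}\epsilon^{k_i})$ of \eqref{eq:m} times $\log(|P_i|/\delta) \le k_i\log\frac{4}{\epsilon} + \log\frac{\mathrm{vol}_{k_i}(A_i)}{c_{k_i}\delta}$. This exceeds the bracket in \eqref{eq:m} by the dimensional constant $k_i\log 2 + \log(\omega_{k_i}/c_{k_i})$.

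Say this explicitly and state the sample bound with this bracket. Do not silently shrink $c_{k_i}$ to absorb the difference. That changes a constant the statement ties to the cited cap-mass lemma, and the absorption only works when the bracket is bounded away from zero.
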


\begin{proof}
Each single-cloud tail bound $\Pr[d_H(X_i, A_i) > \epsilon] \leq \delta$ is a covering argument. Take a minimal $(\epsilon/2)$-covering of $A_i$, of $N_i \leq \mathrm{vol}_{k_i}(A_i)/(\omega_{k_i}(\epsilon/2)^{k_i})$ balls: every point of $A_i$ is within $\epsilon/2$ of a net center, so if each cap $B(c, \epsilon/2) \cap A_i$ contains a sample then $d_H(X_i, A_i) \leq \epsilon$. Each cap has $\mu_i$-mass $\geq f_i c_{k_i}(\epsilon/2)^{k_i}$ (the $c_{k_i}$ absorbing the reach and boundary corrections; the flat value $\omega_{k_i}2^{-k_i}$ is unattainable on curved or bounded $A_i$), so it is missed by all $m_i$ samples with probability $\leq (1 - f_i c_{k_i}(\epsilon/2)^{k_i})^{m_i} \leq e^{-m_i f_i c_{k_i}(\epsilon/2)^{k_i}}$; a union bound over the $N_i$ caps gives $\Pr[d_H(X_i, A_i) > \epsilon] \leq \delta$ under the stated $m_i$.

A further union bound over the $k$ clouds makes $\max_i d_H(X_i, A_i) \leq \epsilon$ hold with probability $\geq 1 - k\delta$; since $\epsilon \leq \tfrac12\,\mathrm{dist}(r, \mathrm{Crit}(T))$ the window $[r-\epsilon, r+\epsilon]$ is critical-free, so Theorem~\ref{thm:consistency}(1) gives the persistent recovery, and Theorem~\ref{thm:discrimination} the pointwise-exact equality at any $M$-regular $r$.
\end{proof}

\begin{remark}[Interpreting the sample complexity]\label{rem:nsw_interpret}
The required sample size scales as $O(\epsilon^{-k_i}\log(1/\delta))$ in the \emph{intrinsic} dimension $k_i$ of each $A_i$, not the ambient $d$---the Niyogi--Smale--Weinberger advantage over naive covering in $\R^d$.
The threshold $\epsilon < \tau/2$ is the ``sampling density beyond twice the feature size'' condition of \citet{niyogi2008finding}: below it each $\mathcal{U}(X_i; r)$ matches $\mathcal{U}(A_i; r)$, and the Intersection Theorem (Theorem~\ref{thm:intersection}) carries this to $\Delta\chi(r)$.
\end{remark}

\begin{corollary}[Uniform recovery over a scale band]\label{cor:uniform}
Under the hypotheses of Theorem~\ref{thm:quantitative}, and assuming as in Theorem~\ref{thm:consistency} that $T$ has finitely many critical values in the band, fix $[r_{\min}, r_{\max}] \subset (0, \tau)$ and $\epsilon < \min(r_{\min}, \tau - r_{\max})/2$. If every $m_i$ and $\delta$ satisfy~\eqref{eq:m}, then with probability $\geq 1 - k\delta$
\[
    \Delta\chi(r; X_1, \ldots, X_k) = \chi\big(\textstyle\bigcap_i \mathcal{U}(A_i; r)\big) \qquad \text{for all } r \in [r_{\min}, r_{\max}] \setminus (E_T \cup E_M)
\]
simultaneously, where
\begin{itemize}
    \item $E_T$ is the union of the closed $2\epsilon$-windows $[c-\epsilon, c+\epsilon]$ about the critical values $c$ of $T$ in the band, of Lebesgue measure $\leq 2\epsilon\,\#\mathrm{Crit}(T) = O(\epsilon)$;
    \item $E_M$ is the (sample-dependent) union of the scales straddled by an ephemeral feature of the sample filtration $M$, each an interval of length $\leq 2\epsilon$.
\end{itemize}
\end{corollary}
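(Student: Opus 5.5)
The plan is to reduce the uniform statement to the single-scale argument of Theorem~\ref{thm:consistency}, run simultaneously for every $r$ in the band, on \emph{one} high-probability event. First, by the covering argument in the proof of Theorem~\ref{thm:quantitative}, the sample-size condition~\eqref{eq:m} makes each $d_H(X_i,A_i)\le\epsilon$ fail with probability at most $\delta$. A union bound over the $k$ clouds then gives the event $\mathcal{E}=\{\max_i d_H(X_i,A_i)\le\epsilon\}$ with $\mathbb{P}[\mathcal{E}]\ge 1-k\delta$. The key observation is that $\mathcal{E}$ does not depend on $r$: the density requirement~\eqref{eq:m} involves only $\epsilon$, not the scale. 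So no further union bound over scales is needed, and all remaining reasoning is deterministic on $\mathcal{E}$.

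On $\mathcal{E}$, the inclusion $X_i\subseteq A_i$ and $\epsilon$-density give the sandwich $T_{\rho-\epsilon}\subseteq M_\rho\subseteq T_\rho$ of Theorem~\ref{thm:discrimination} for every $\rho\ge\epsilon$. This holds throughout the band, because $r_{\min}>2\epsilon$. Fix $r\in[r_{\min},r_{\max}]\setminus(E_T\cup E_M)$.

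\textbf{Population side.} Since $r\notin E_T$, no critical value $c$ of $T$ satisfies $|r-c|\le\epsilon$, so $[r-\epsilon,r+\epsilon]$ is free of critical values of $T$. The condition $\epsilon<(\tau-r_{\max})/2$ keeps this window inside $(0,\tau)$, where the standing hypotheses give finitely many critical values and finite-dimensional homology.

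\textbf{Sample side.} I will interpret $E_M$ precisely as the set of $r$ for which $M$ has a homological critical value in $[r,r+\epsilon]$ that is not within $\epsilon$ of a critical value of $T$. By the interleaving (Theorem~\ref{thm:algebraic_stability}(1), applied to $M$ versus $T$), every $M$-bar of persistence $>2\epsilon$ is matched to a $T$-bar, so any such unmatched endpoint belongs to an ephemeral bar of length $\le 2\epsilon$. This matches the description of $E_M$ in the statement.

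With both windows critical-free, part (2) of Theorem~\ref{thm:discrimination} applies verbatim. Its regularity requirement is that $T$ has no critical value in $[r-\epsilon,r]$ and $M$ has none in $[r,r+\epsilon]$, which is exactly what we have. It yields $H_*(M_r)\cong H_*(T_r)$, and hence $\Delta\chi(r)=\chi(T_r)$ by the Intersection Theorem~\ref{thm:intersection}. Since $r$ was arbitrary in the complement, the equality holds simultaneously.

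\textbf{Measure bound.} Each critical value $c$ contributes a window of length $2\epsilon$, giving $|E_T|\le 2\epsilon\,\#\mathrm{Crit}(T)$. The count $\#\mathrm{Crit}(T)$ is finite by hypothesis and independent of $\epsilon$, so $|E_T|=O(\epsilon)$.

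\textbf{Main obstacle.} The delicate step is making $E_M$ honest. One must check that a critical value of $M$ lying in $[r,r+\epsilon]$ but within $\epsilon$ of a critical value of $T$ is already excluded by $E_T$. It is not automatic, since a matched endpoint moves by up to $\epsilon$ and could fall in $[r,r+\epsilon]$ while $c$ lies just outside $[r-\epsilon,r+\epsilon]$.

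I would handle this with a fallback. If the bookkeeping fails, I enlarge the definition of $E_M$ to include all scales $r$ for which $[r,r+\epsilon]$ contains any critical value of $M$. Each such critical value then contributes an interval of length $\epsilon\le 2\epsilon$, as the statement allows, and it is the bar-endpoint description that makes each contribution an interval of length $\le 2\epsilon$. Under this convention the argument above goes through without needing the matching at all: the hypothesis of Theorem~\ref{thm:discrimination}(2) is then literally the complement of $E_T\cup E_M$, and the proof is complete.
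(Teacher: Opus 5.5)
Your architecture matches the paper's. You form one $r$-independent event $\{\max_i d_H(X_i,A_i)\le\epsilon\}$ of probability $\ge 1-k\delta$, then apply Theorem~\ref{thm:discrimination}(2) deterministically at every $r$ off $E_T\cup E_M$.

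The gap is on the sample side, and the obstacle you flag is real. Under the symmetric $\epsilon$-matching of Theorem~\ref{thm:algebraic_stability}(1), an endpoint $c'\in[r,r+\epsilon]$ of a \emph{long} sample bar may be matched to a population critical value $c\in(r+\epsilon,r+2\epsilon]$. The condition $r\notin E_T$ does not exclude that. The paper's proof attributes non-regular scales to ephemeral bars in one line by citing that same matching, so it carries the same slack.

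Your fallback does not close this gap. Redefining $E_M$ as \emph{all} scales with a sample critical value in $[r,r+\epsilon]$ keeps the length bound. But it turns the corollary into a pointwise restatement of Theorem~\ref{thm:discrimination}(2). It also drops exactly what the statement asserts: that outside $E_T$ the sample-side exceptions are caused by ephemeral features. Remark~\ref{rem:artifact_set} uses this when it bounds $|E_M|$ by the number of ephemeral bars.

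The missing idea is that this interleaving is one-sided.
\begin{itemize}
\item The sandwich $T_{\rho-\epsilon}\subseteq M_\rho\subseteq T_\rho$ (with $T_\rho=\emptyset$ for $\rho<0$) makes $H_*(M_\bullet)$ and the shifted module $H_*(T_{\bullet-\epsilon/2})$ $\tfrac{\epsilon}{2}$-interleaved.
\item By the isometry theorem, every sample bar of persistence $>\epsilon$ is then matched to a population bar, with each sample endpoint $c'$ lying in $[c,c+\epsilon]$ for the corresponding population endpoint $c$. Sample critical values lag population ones and never lead them.
\item Hence, for $r\notin E_T$, a matched endpoint $c'\in[r,r+\epsilon]$ would force $c\in[c'-\epsilon,c']\subseteq[r-\epsilon,r+\epsilon]$, a contradiction.
\item So every sample critical value in $[r,r+\epsilon]$ is an endpoint of an unmatched bar $[b',d')$ of persistence $\le\epsilon$. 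Then $r\in[b'-\epsilon,d']$, an interval of length $\le 2\epsilon$, which is the stated $E_M$.
\end{itemize}
Equivalently, your enlarged set differs from the ephemeral one only inside $E_T$. That containment is what the fallback needed, and with it your argument is complete.
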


\begin{proof}
On $\{\max_i d_H(X_i, A_i) \leq \epsilon\}$ (probability $\geq 1 - k\delta$) the filtrations $M, T$ are $\epsilon$-interleaved (Theorem~\ref{thm:discrimination}); the choice of $\epsilon$ places $[r_{\min}, r_{\max}]$ inside $(\epsilon, \tau - \epsilon)$. Off $E_T$ each scale has $\mathrm{dist}(r, \mathrm{Crit}(T)) > \epsilon$, hence a critical-free closed $\epsilon$-window; off $E_M$ it is regular for $M$; so off $E_T \cup E_M$ Theorem~\ref{thm:discrimination} gives exact recovery. Every scale in $E_M$ is straddled by a sample bar of persistence $\leq 2\epsilon$: a longer bar is matched by the interleaving to a population bar (Theorem~\ref{thm:algebraic_stability}(1)) and so marks a genuine, not ephemeral, feature.
\end{proof}

\begin{remark}[The sample-artifact set]\label{rem:artifact_set}
The population part $E_T$ has measure $O(\epsilon)$ unconditionally. The sample part $E_M$ collects transient artifacts of the finite sample; its total length is $2\epsilon$ times the number of ephemeral sample bars meeting the band, an almost-surely finite quantity {under the i.i.d.\ sampling model of Theorem~\ref{thm:quantitative} (finitely many samples give a finite filtration, hence finitely many bars)}. Bounding it \emph{uniformly in the sample size}---hence $|E_M| = O(\epsilon)$---reduces to controlling the transient features of the intersection filtration, which the covering-number bound of Theorem~\ref{thm:geometric_bounds} supplies in the reach-controlled regime. Two guarantees need no such control: the persistent recovery of Theorem~\ref{thm:consistency}(1) holds at \emph{every} $T$-regular scale unconditionally, and removing $E_M$ at a single prescribed scale is exactly the sample-regularity event of Theorem~\ref{thm:consistency}(2).
\end{remark}

\begin{remark}[Exact versus bottleneck stability]\label{rem:ph_connection}
Corollary~\ref{cor:uniform} is the Euler-characteristic analogue of persistence-diagram stability under Hausdorff perturbation \citep{chazal2016structure}---but stronger: where the bottleneck distance is only \emph{bounded} by the Hausdorff distance, the integer-valued $\Delta\chi$ attains \emph{exact equality} over a band of scales---off the $O(\epsilon)$-measure critical-scale set $E_T$ and the transient sample artifacts $E_M$ of Remark~\ref{rem:artifact_set}---after finitely many samples.
{The exactness is more than integer-valuedness alone: it is the \emph{exact} persistent-recovery identity of Theorem~\ref{thm:consistency}(1), which pins the integer $\Delta\chi$ to $\chi(T_r)$ rather than merely bounding a distance---integer-valuedness is what then makes ``close'' mean ``equal.''}
\end{remark}

\section{Realizations, Algorithm, and Complexity}\label{sec:algorithm}

\subsection{Realizations of the Intersection ECP}\label{sec:concrete}

 The Intersection ECP $\mathcal{M}$ admits concrete realizations via specific computational models.
We derive these for $k = 2$; the extension to general $k$ via inclusion-exclusion is straightforward.

\paragraph{Alpha complex realization.}
The first realization is the one we compute with: it expresses $\Delta\chi$ through the (weighted) Alpha complexes of $X$, $Y$, and $X \cup Y$, reducing the Euler integral to a count of simplices.
Recall the \textbf{Alpha complex} $\Alpha(\mathcal{P}; r)$ of a finite cloud $\mathcal{P} \subset \R^d$: the subcomplex of the Delaunay triangulation of $\mathcal{P}$ formed by the simplices of circumradius $\le r$.
It has dimension $\le d$ (its simplices are Delaunay, so have at most $d+1$ vertices; the general-position assumption below makes it a genuine simplicial complex), and by the Nerve Theorem is homotopy equivalent to the ball union $\mathcal{U}(\mathcal{P}; r)$, so $\chi_{\mathcal{P}}(r) := \chi(\Alpha(\mathcal{P}; r)) = \chi(\mathcal{U}(\mathcal{P}; r))$ is computed as the signed simplex count $\sum_{q} (-1)^q \sigma_q(r)$, $\sigma_q(r)$ the number of $q$-simplices at scale $r$.  Throughout we assume the combined point set $X \cup Y$ is in \emph{general position}---no $d+1$ points affinely dependent, and no $d+2$ points cospherical, in the ordinary sense on the diagonal $r=s$ and in the \emph{weighted (power)} sense when $r\neq s$ (no $d+2$ sites on a common sphere in the power metric---the \emph{power distance} of a point $x$ to a weighted site $z$ of weight $w(z)$ being {$\pi_z(x) = \|x - z\|^2 - w(z)$}---equivalently no point power-equidistant from more than $d+1$ sites)---so that the (weighted) Delaunay triangulation is well defined and each Alpha complex is simplicial of dimension $\le d$.
This is generic and can be enforced symbolically by Simulation of Simplicity \citep{edelsbrunner1990simulation}, which perturbs the sites and leaves the computed Euler characteristic unchanged.

\begin{proposition}[Alpha complex formula ($k = 2$)]\label{prop:alpha_formula}
For disjoint $X, Y \subset \R^d$ and scales $r, s \geq 0$:
\begin{equation}\label{eq:alpha_formula}
    \Delta\chi(r, s;\, X, Y) = \chi_X(r) + \chi_Y(s) - \chi(\mathcal{U}(X; r) \cup \mathcal{U}(Y; s)),
\end{equation}
where $\chi_X(r) = \chi(\Alpha(X; r))$ and the union term is computed via the weighted Alpha complex with weights $w(p) = r^2$ for $p \in X$ and $w(p) = s^2$ for $p \in Y$.
On the diagonal $r = s$, this reduces to $\Delta\chi(r) = \chi_X(r) + \chi_Y(r) - \chi_{X \cup Y}(r)$ via the standard Alpha complex.
\end{proposition}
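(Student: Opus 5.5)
The proof will have three steps: an inclusion--exclusion identity for the overlap, a homotopy equivalence for each of the two single-cloud terms, and the weighted Alpha-complex model of the union term. All sets involved are compact semi-algebraic (finite unions of closed balls), so Proposition~\ref{prop:euler_product} and the valuation property of $\chi$ apply.

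\emph{Step 1 (inclusion--exclusion).} Inclusion--exclusion on $A = \mathcal{U}(X;r)$ and $B = \mathcal{U}(Y;s)$, combined with the Intersection Theorem (Theorem~\ref{thm:intersection}) for $k=2$, gives
\[
\Delta\chi(r,s) = \chi(A\cap B) = \chi(A) + \chi(B) - \chi(A\cup B).
\]
Equivalently, $\mathbf{1}_{A\cap B} = \mathbf{1}_A + \mathbf{1}_B - \mathbf{1}_{A\cup B}$ pointwise, and we integrate with the $\Z$-linear Euler integral. This step is purely formal.

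\emph{Step 2 (single-cloud terms).} We identify $\chi(A) = \chi(\Alpha(X;r))$ and $\chi(B) = \chi(\Alpha(Y;s))$. The Alpha complex $\Alpha(X;r)$ is the nerve of the cover of $\mathcal{U}(X;r)$ by the sets $B(x,r)\cap V_x$, where $V_x$ is the Voronoi cell of $x$. Each of these sets is convex, and so is every nonempty finite intersection of them. The Nerve Theorem (as in Lemma~\ref{lem:cell_decomp}) therefore gives $\Alpha(X;r)\simeq \mathcal{U}(X;r)$. Homotopy invariance of $\chi$, together with the signed simplex count on a finite complex, yields $\chi_X(r)$, and the same argument gives $\chi_Y(s)$.

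\emph{Step 3 (the union term; main obstacle).} When $r\neq s$ the two families of balls have different radii, so ordinary Voronoi cells do not clip them correctly. Here I will use the power diagram of the weighted sites, with $w(p)=r^2$ for $p\in X$ and $w(p)=s^2$ for $p\in Y$. The ball $B(p,\sqrt{w(p)})$ is exactly the sublevel set $\{\pi_p\le 0\}$ of the power distance. Clipping each ball by its power cell $P_p$ gives a cover of $A\cup B$ by the sets $B(p,\sqrt{w(p)})\cap P_p$. These are convex, because power cells are intersections of half-spaces (bisectors of power distances are hyperplanes), and they lie in the union.

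I expect the main check to be that this family still covers $A\cup B$. If $x$ lies in the ball of some site $q$, take $p$ minimizing $\pi_p(x)$. Then $\pi_p(x)\le \pi_q(x)\le 0$, so $x$ lies in the clipped ball of $p$. Given coverage, the nerve of the clipped balls is by definition the weighted Alpha complex. Under the general-position hypothesis, the weighted Delaunay triangulation is a genuine simplicial complex of dimension $\le d$. The Nerve Theorem then gives $\chi(A\cup B) = \chi(\text{weighted Alpha complex})$, again computed as a signed simplex count.

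Substituting Steps 2 and 3 into Step 1 proves \eqref{eq:alpha_formula}. On the diagonal $r=s$ all weights are equal, so the power diagram is the Voronoi diagram and the weighted complex becomes the standard $\Alpha(X\cup Y;r)$. This gives the stated reduction.
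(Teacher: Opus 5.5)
Your proof is correct and follows the paper's route: inclusion--exclusion for the valuation $\chi$ on $\mathcal{U}(X;r)$ and $\mathcal{U}(Y;s)$, then the Nerve Theorem to identify each term with the Euler characteristic of the corresponding (weighted) Alpha complex. Your explicit check that the power-cell-clipped balls are convex and cover $\mathcal{U}(X;r)\cup\mathcal{U}(Y;s)$ fills in the weighted case, which the paper's one-line proof leaves implicit.
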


\begin{proof}
By inclusion--exclusion for the valuation $\chi$ (Section~\ref{sec:euler_integral}), $\Delta\chi = \chi_X + \chi_Y - \chi^{\cup}$, and the Nerve Theorem identifies each $\chi_{\mathcal{P}}(r)$ with $\chi(\Alpha(\mathcal{P}; r))$.
\end{proof}
For general $k$, inclusion--exclusion gives the exact $(2^k - 1)$-term formula
\[
    \Delta\chi(\mathbf{t};\, X_1, \ldots, X_k) = \sum_{\emptyset \neq S \subseteq [k]} (-1)^{|S|+1} \chi\!\left(\bigcup_{i \in S} \mathcal{U}(X_i; t_i)\right),
\]
expensive for large $k$ but exact; in practice small $k$ (typically $k = 2$) suffices.

\paragraph{Nerve realization.}
A second realization trades computational efficiency for geometric transparency.
For $k = 2$ the intersection decomposes into convex lens regions,
\[
    \mathcal{U}(X; r) \cap \mathcal{U}(Y; s) = \bigcup_{\substack{x \in X,\, y \in Y \\ \|x - y\| < r + s}} \big(B(x, r) \cap B(y, s)\big),
\]
each contractible ($\chi = 1$) and present only once $\|x - y\| < r + s$, the clouds' combined reach; modeling the intersection by the nerve of these lenses exposes \emph{where} the clouds interact, not just the value of $\Delta\chi$.

From the cell decomposition (Lemma~\ref{lem:cell_decomp}) and the Intersection Theorem (Theorem~\ref{thm:intersection}), we obtain the following.

\begin{proposition}[Nerve formula for Intersection EC ($k = 2$)]\label{prop:nerve}
Let $\mathcal{L}(\mathbf{t}) = \{L_{x,y} = B(x,r) \cap B(y,s) : (x,y) \in X \times Y,\, \|x - y\| < r + s\}$ be the collection of non-empty cross-cloud lenses.
Then
\begin{equation}\label{eq:nerve_formula}
    \Delta\chi(r,s;\, X, Y) = \chi(\mathcal{N}(\mathcal{L}(\mathbf{t}))),
\end{equation}
where $\mathcal{N}(\mathcal{L}(\mathbf{t}))$ is the nerve complex of the lens covering.
\end{proposition}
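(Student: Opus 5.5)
The argument chains the Intersection Theorem (Theorem~\ref{thm:intersection}) with the cell decomposition of Lemma~\ref{lem:cell_decomp}, specialized to $k=2$ and $\mathbf{t}=(r,s)$. By Theorem~\ref{thm:intersection}, $\Delta\chi(r,s;X,Y)=\chi\big(\mathcal{U}(X;r)\cap\mathcal{U}(Y;s)\big)$. Distributing the intersection over the unions gives $\mathcal{U}(X;r)\cap\mathcal{U}(Y;s)=\bigcup_{(x,y)\in X\times Y}B(x,r)\cap B(y,s)$. Here the empty cells may be discarded. The remaining nonempty cells are exactly the lenses $L_{x,y}$ of $\mathcal{L}(\mathbf{t})$. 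The overlap is therefore the union of the members of $\mathcal{L}(\mathbf{t})$.

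Each lens is an intersection of two Euclidean balls, hence convex. Every finite subfamily $L_{x_0,y_0}\cap\cdots\cap L_{x_m,y_m}$ is again an intersection of balls, so it is either empty or convex and contractible. The family is finite, and its members are closed convex subsets of $\R^d$. The closed-convex-cover form of the Nerve Theorem therefore applies and gives $\mathcal{N}(\mathcal{L}(\mathbf{t}))\simeq\mathcal{U}(X;r)\cap\mathcal{U}(Y;s)$; this is precisely the equivalence asserted in Lemma~\ref{lem:cell_decomp}. The Euler characteristic is a homotopy invariant. Both sides have finite-dimensional homology: the nerve is a finite complex, and the overlap is a compact semi-algebraic set. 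Hence $\chi(\mathcal{N}(\mathcal{L}(\mathbf{t})))=\chi(\mathcal{U}(X;r)\cap\mathcal{U}(Y;s))=\Delta\chi(r,s)$, which is \eqref{eq:nerve_formula}.

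The only delicate point is the bookkeeping at tangency. As written, $\mathcal{L}(\mathbf{t})$ keeps only pairs with the strict inequality $\|x-y\|<r+s$. The closed balls, however, also meet when $\|x-y\|=r+s$, in a single tangency point. Lemma~\ref{cor:dead_zone}(1) shows that these points can carry nonzero $\Delta\chi$. There are two remedies, and I will adopt the first. The first is to read the index condition as $\|x-y\|\le r+s$, i.e.\ as ranging over all nonempty cells, exactly as in Lemma~\ref{lem:cell_decomp}; then a tangent lens is a single point, still convex, and the nerve argument goes through unchanged. The second is to keep the strict inequality and state the formula only off the measure-zero set of tangency scales $r+s=\|x-y\|$. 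On that complement the strict and non-strict families coincide, so the argument above applies directly there. Apart from this point the proof is immediate from the two cited results.
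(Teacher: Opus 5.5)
Your argument is correct and is the paper's proof: identify the lenses with the nonempty convex cells of Lemma~\ref{lem:cell_decomp}, apply the Nerve Theorem, and finish with the Intersection Theorem. Your tangency remark is also right and sharper than the paper. The paper's proof asserts that the strict-inequality lenses are exactly the nonempty cells, which is false for closed balls when $\|x-y\| = r+s$: at $r+s=d_{\min}$ the strict family is empty, so the nerve has $\chi=0$, while $\Delta\chi\ge 1$ by Lemma~\ref{cor:dead_zone}(1). Reading the index condition as $\|x-y\|\le r+s$ is therefore the needed fix.
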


\begin{proof}
For $k = 2$ the lenses $\mathcal{L}(\mathbf{t})$ are precisely the non-empty convex cells of Lemma~\ref{lem:cell_decomp}, so $\bigcap_i \mathcal{U}(X_i; t_i) \simeq \mathcal{N}(\mathcal{L}(\mathbf{t}))$; the Intersection Theorem then gives $\Delta\chi(r,s) = \chi(\mathcal{N}(\mathcal{L}(\mathbf{t})))$.
\end{proof}

For general $k$, each lens is a $k$-fold intersection $\bigcap_i B(x_i, t_i)$ over a cross-tuple $(x_1, \ldots, x_k) \in \prod_i X_i$, still convex, so the Nerve Theorem continues to apply, giving the analogous nerve model for each floor $C_j(\mathbf{t})$ (Remark~\ref{rem:spectrum_nerve}).

\begin{proposition}[Vertices of the lens nerve]\label{thm:lens_complexity}
For disjoint point clouds $X, Y \subset \R^d$ with $|X| = m$, $|Y| = n$, the nerve $\mathcal{N}(\mathcal{L}(\mathbf{t}))$ of the cross-cloud lens covering (Proposition~\ref{prop:nerve}) has vertex set in bijection with the active cross-pairs:
\begin{equation}\label{eq:nerve_complexity}
    \#\{\text{vertices of } \mathcal{N}(\mathcal{L}(\mathbf{t}))\} = |\{(x,y) \in X \times Y : \|x-y\| < r+s\}| \leq mn.
\end{equation}
Its higher-dimensional simplices are the sub-families of lenses with a common point, and their number is \emph{not} polynomially bounded: if $\ell$ lenses share a common region they span an $(\ell-1)$-simplex with $2^\ell - 1$ nonempty faces, and $\ell$ can be $\Theta(m+n)$ even in general position.
\end{proposition}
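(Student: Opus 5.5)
The plan has two parts: the vertex count, then a lower-bound construction for higher simplices.

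For the vertex count, we unwind the definition of the nerve. The vertices of $\mathcal{N}(\mathcal{L}(\mathbf{t}))$ are the members of the cover $\mathcal{L}(\mathbf{t})$, indexed by the pairs $(x,y)$. Nonemptiness of $L_{x,y}=B(x,r)\cap B(y,s)$ is equivalent to $\|x-y\|\le r+s$. The statement uses the strict inequality, as the definition of $\mathcal{L}(\mathbf{t})$ in Proposition~\ref{prop:nerve} does, so the tangency case is simply excluded by that convention. I will note that the bijection is between index pairs and vertices. The only subtlety is that two distinct pairs could define the same set as a lens. Since $X\cap Y=\emptyset$ and $X,Y$ are finite, we treat the cover as indexed (multi)family, which is standard for nerves. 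The count is then the number of active cross-pairs, bounded by $|X\times Y|=mn$.

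For the higher simplices, we again use the nerve definition: a subfamily spans a simplex iff the corresponding lenses have a common point. If $\ell$ lenses share a point, every nonempty subfamily does too, giving $2^\ell-1$ faces. The substantive claim is that $\ell=\Theta(m+n)$ is achievable in general position. I would build this with an explicit cluster. Place all $m$ points of $X$ and $n$ points of $Y$ in a tiny ball $B(0,\eta)$ in general position, with disjoint clouds. Take $r=s>\eta$. Then every ball $B(x,r)$ and $B(y,s)$ contains $0$, so all $mn$ lenses contain $0$.

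The $mn$-lens clique already gives at least $\ell=mn\ge m+n-1$ lenses through a point. To match the statement's $\Theta(m+n)$, one can instead take a star: $Y=\{y_0\}\cup Y'$ with $Y'$ far away. Then the lenses $L_{x,y_0}$, $x\in X$, number $m$ and all contain $0$, while $X$ symmetrically supplies $n$ lenses with a fixed $x_0$. Either way the number of nerve simplices is at least $2^{\ell}-1$, which is exponential, hence not polynomially bounded in $m+n$.

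The main obstacle is keeping the general-position hypothesis: no $d+1$ affinely dependent and no $d+2$ cospherical points. A small generic perturbation of the cluster preserves the strict containment $0\in\operatorname{int}B(p,r)$ for all sites, because each containment is an open condition. So general position can be imposed without destroying the common point, and the exponential face count survives.
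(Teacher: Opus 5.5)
Your proposal is correct and follows essentially the same route as the paper: the vertex count is read off the definition of $\mathcal{L}(\mathbf{t})$, and clustering all $m+n$ centers so that a common point lies in every ball makes all $mn$ lenses share it, giving $2^{mn}-1$ nerve simplices. Your extra remarks tighten points the paper's proof passes over: closed lenses are nonempty iff $\|x-y\|\le r+s$, so the strict inequality is only a convention of $\mathcal{L}(\mathbf{t})$; the cover is an indexed family; and general position survives a small perturbation because $0\in\operatorname{int}B(p,r)$ is an open condition.
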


\begin{proof}
A vertex is a non-empty lens $B(x,r) \cap B(y,s)$, which is non-empty iff $\|x-y\| < r+s$, giving \eqref{eq:nerve_complexity}.
A $p$-simplex is a set of $p+1$ lenses with $\bigcap_{j}(B(x_j,r) \cap B(y_j,s)) \neq \emptyset$; placing $m+n$ centers so that the origin lies in every ball ($\|x_j\|<r$, $\|y_j\|<s$) makes all $mn$ lenses share the origin, so every sub-family is a simplex and $|\mathcal{N}| = 2^{mn} - 1$.
This already occurs in general position, so no polynomial bound holds.
\end{proof}

The lens nerve is thus \emph{not} a competitive computational route for $\Delta\chi$: its simplex count can be exponential when many lenses overlap, whereas $\chi$ of the same intersection is obtained from the Alpha complex of $X \cup Y$---of worst-case size $O((m+n)^{\lceil d/2 \rceil})$ (Theorem~\ref{thm:algorithm_complexity})---by the inclusion--exclusion formula (Proposition~\ref{prop:alpha_formula}).
We therefore compute $\Delta\chi$ via the Alpha complex; the lens nerve is retained only for geometric interpretation, its $1$-skeleton (the active cross-pairs and their overlaps) encoding the spatial pattern of the interaction, not just its Euler characteristic.

\subsection{The Alpha-Complex Algorithm}

The Alpha-complex realization (Proposition~\ref{prop:alpha_formula}) turns the Intersection ECP into a direct computation. Its defining feature---and the source of its efficiency relative to the mixup barcode---is that the Euler characteristic of an Alpha complex is a \emph{signed simplex count}: it needs a single pass over the simplices and \emph{no} persistence (boundary-matrix) reduction. We give the algorithm on the diagonal for $k = 2$; the off-diagonal and $k$-cloud cases follow by the same template.

For a finite cloud $\mathcal{P} \subset \R^d$, the Alpha filtration assigns to each Delaunay simplex $\sigma$ a birth scale $a(\sigma)$---the least $r$ with $\sigma \in \Alpha(\mathcal{P}; r)$. The Euler characteristic curve is then
\begin{equation}\label{eq:ecc_sweep}
    \chi_{\mathcal{P}}(r) = \chi(\Alpha(\mathcal{P}; r)) = \sum_{\sigma:\, a(\sigma) \leq r} (-1)^{\dim \sigma},
\end{equation}
a step function that starts at $0$ and changes by $(-1)^{\dim\sigma}$ as $r$ crosses each $a(\sigma)$. It is obtained by sorting the simplices on birth scale and accumulating the alternating sum---no homology is computed. Algorithm~\ref{alg:mixup} applies this to $X$, $Y$, and $X \cup Y$ and combines them by inclusion--exclusion.

\begin{algorithm}[t]
\caption{Diagonal Intersection ECP $\Delta\chi(\cdot;\, X, Y)$}\label{alg:mixup}
\KwIn{point clouds $X, Y \subset \R^d$}
\KwOut{the step function $\Delta\chi(\cdot;\, X, Y) : \R_{\geq 0} \to \Z$}
\ForEach{$\mathcal{P} \in \{X,\ Y,\ X \cup Y\}$}{
  compute the Alpha filtration of $\mathcal{P}$ (a standard primitive, from the Delaunay triangulation; e.g.\ \citep{gudhi2024}): Delaunay simplices $\sigma$ with birth scales $a(\sigma)$\;

  sort the simplices by $a(\sigma)$\;
  sweep, recording $\chi_{\mathcal{P}}$ as the running alternating sum $\sum_{a(\sigma)\le r}(-1)^{\dim\sigma}$ of \eqref{eq:ecc_sweep}\;
}
\Return $\Delta\chi(r) = \chi_X(r) + \chi_Y(r) - \chi_{X\cup Y}(r)$ on the merged set of critical scales\;
\end{algorithm}

\begin{theorem}[Correctness]\label{thm:algorithm_correct}
For every $r \geq 0$, Algorithm~\ref{alg:mixup} returns $\Delta\chi(r) = \chi(\mathcal{U}(X;r) \cap \mathcal{U}(Y;r))$.
\end{theorem}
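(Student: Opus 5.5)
The plan is to verify, for each fixed $r \ge 0$, that the three running sums in the sweep equal $\chi(\mathcal{U}(X;r))$, $\chi(\mathcal{U}(Y;r))$ and $\chi(\mathcal{U}(X\cup Y;r))$. We then combine them by inclusion--exclusion and the Intersection Theorem. The argument runs in three steps.

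\emph{Step 1: the sweep computes $\chi(\Alpha(\mathcal{P};r))$.} Fix $\mathcal{P}\in\{X,Y,X\cup Y\}$. By definition, $\Alpha(\mathcal{P};r)$ consists exactly of the Delaunay simplices with $a(\sigma)\le r$, and the Alpha filtration is a filtration by subcomplexes. Under the standing general-position assumption (or Simulation of Simplicity) it is a genuine simplicial complex of dimension $\le d$. Its Euler characteristic is therefore the alternating simplex count~\eqref{eq:ecc_sweep}. Sorting by $a(\sigma)$ and accumulating $(-1)^{\dim\sigma}$ evaluates this step function exactly on every interval between consecutive birth scales.

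Two bookkeeping points need attention. First, ties in $a(\sigma)$ are harmless, because we read the value only after all simplices with $a(\sigma)\le r$ have been added. Second, evaluation at an arbitrary $r$ means taking the value at the largest breakpoint $\le r$. Since the three curves are right-continuous step functions with the closed convention $\le r$, their combination is correct on the merged breakpoint set.

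\emph{Step 2: Alpha complex to ball union.} Invoke the Nerve Theorem in the form already used for Proposition~\ref{prop:alpha_formula}. The Alpha complex is the nerve of the cover of $\mathcal{U}(\mathcal{P};r)$ by the convex sets $B(p,r)\cap V_p$, where $V_p$ is the Voronoi cell of $p$. These sets are convex, so all their finite intersections are empty or contractible. Hence $\Alpha(\mathcal{P};r)\simeq\mathcal{U}(\mathcal{P};r)$, and the Euler characteristics agree. For $\mathcal{P}=X\cup Y$ this is the unweighted case on the diagonal, so no weighted construction is needed.

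\emph{Step 3: combine.} By the valuation property of $\chi$ on compact semi-algebraic sets (Section~\ref{sec:euler_integral}), together with $\mathcal{U}(X\cup Y;r)=\mathcal{U}(X;r)\cup\mathcal{U}(Y;r)$, we get
\[
\chi_X(r)+\chi_Y(r)-\chi_{X\cup Y}(r)=\chi\big(\mathcal{U}(X;r)\cap\mathcal{U}(Y;r)\big).
\]
Theorem~\ref{thm:intersection} identifies the right-hand side with $\Delta\chi(r)$. This is exactly Proposition~\ref{prop:alpha_formula} on the diagonal.

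The main obstacle is Step 2 at the level of care required. The Nerve Theorem needs a cover by closed convex sets whose nerve coincides with the Alpha complex at the closed threshold $a(\sigma)\le r$. I would cite the standard Edelsbrunner result that Alpha complexes are nerves of Voronoi-restricted balls, rather than reprove it. I would also note that Simulation of Simplicity leaves $\chi$ unchanged, so degenerate inputs are covered.
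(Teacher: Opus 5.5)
Your proposal is correct and follows the same route as the paper. The sweep computes $\chi(\Alpha(\mathcal{P};r))$, the Nerve Theorem identifies this with $\chi(\mathcal{U}(\mathcal{P};r))$, and inclusion--exclusion together with Theorem~\ref{thm:intersection} yields $\chi(\mathcal{U}(X;r)\cap\mathcal{U}(Y;r))$; you simply spell out the valuation step (Proposition~\ref{prop:alpha_formula}) and the step-function bookkeeping that the paper's one-line proof leaves implicit.
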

\begin{proof}
The sweep computes $\chi_{\mathcal{P}}(r) = \chi(\Alpha(\mathcal{P}; r)) = \chi(\mathcal{U}(\mathcal{P}; r))$ exactly (Nerve Theorem), so by the Intersection Theorem (Theorem~\ref{thm:intersection}) the returned $\chi_X(r) + \chi_Y(r) - \chi_{X\cup Y}(r)$ equals $\chi(\mathcal{U}(X;r) \cap \mathcal{U}(Y;r))$.
\end{proof}

\begin{theorem}[Complexity]\label{thm:algorithm_complexity}
For $|X| + |Y| = n$ in fixed dimension $d$, Algorithm~\ref{alg:mixup} runs in $O(n^{\lceil d/2\rceil} \log n)$ time and $O(n^{\lceil d/2\rceil})$ space, forming no boundary matrix.
\end{theorem}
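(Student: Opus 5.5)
The plan is to bound the cost of each of the three passes of Algorithm~\ref{alg:mixup} (for $\mathcal{P} = X$, $Y$, $X \cup Y$) separately, and then add the cost of the final merge. Since $|X|, |Y| \le n$ and $|X \cup Y| = n$, it is enough to bound a single pass on a cloud of size at most $n$. That pass has three stages: building the Alpha filtration, sorting, and sweeping.

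\emph{Step 1 (size of the filtration).} Under the general-position assumption, every simplex of $\Alpha(\mathcal{P}; r)$, at every $r$, is a Delaunay simplex of $\mathcal{P}$. The whole filtration is therefore a subset of the Delaunay triangulation. By the Upper Bound Theorem, the Delaunay triangulation of $n$ points in $\R^d$ is the lower hull of the lifted points in $\R^{d+1}$. It therefore has $N = O(n^{\lceil d/2\rceil})$ faces across all dimensions, for fixed $d$.

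\emph{Step 2 (construction).} I would invoke an optimal Delaunay algorithm, for instance a convex-hull algorithm in $\R^{d+1}$ (Chazelle's, or randomized incremental construction). These run in $O(n\log n + n^{\lceil d/2\rceil})$ time, which is $O(n^{\lceil d/2\rceil}\log n)$ in all cases. Each birth scale $a(\sigma)$ is computed in $O(1)$ time for fixed $d$. This uses the standard recursive rule from top-dimensional simplices downward: the circumradius if $\sigma$ is Gabriel (its smallest circumsphere is empty), and otherwise the minimum of the birth scales of the cofaces that attach to it. Each simplex is visited a constant number of times per coface, so the total is $O(N)$ up to the $d$-dependent constant. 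This step is the main obstacle. The bound must be charged to the total face count of the triangulation, not to its number of top simplices. The attaching rule must also be implemented in time linear in the incidences, which the constant coface degree per simplex guarantees. I would cite the standard GUDHI/CGAL construction for this rather than reprove it.

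\emph{Step 3 (sort and sweep).} Sorting the $N$ birth scales costs $O(N \log N) = O(n^{\lceil d/2\rceil}\log n)$, since $\log N = O(\log n)$. The sweep of~\eqref{eq:ecc_sweep} is a single pass that adds $(-1)^{\dim\sigma}$ at each step. Simplices with equal birth scale are grouped to emit one step per distinct scale. This costs $O(N)$ time and never touches a boundary matrix, because only dimensions are read. The three step functions each have $O(N)$ breakpoints. Merging the three sorted breakpoint lists and evaluating $\chi_X + \chi_Y - \chi_{X\cup Y}$ on the merged list is $O(N)$.

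Summing the three passes gives $O(n^{\lceil d/2\rceil}\log n)$ time. For space, the algorithm stores the three simplex lists together with their birth scales and the output step function, which is $O(N) = O(n^{\lceil d/2\rceil})$. At no stage is a boundary matrix formed or reduced.
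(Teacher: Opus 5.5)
Your proposal is correct and follows the same route as the paper. Both arguments use the Upper Bound Theorem through the paraboloid lift to get $N = O(n^{\lceil d/2\rceil})$ faces, lift-and-hull construction in $O(n\log n + n^{\lceil d/2\rceil})$ time, an $O(N\log N)$ sort, and a linear signed-count sweep with no boundary matrix; your accounting for the birth scales and the final merge fills in steps the paper leaves implicit.

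One small correction: the birth-scale pass is linear because each simplex has at most $d+1$ facets, so there are at most $(d+1)N$ facet--coface incidences in total. It is not because each simplex has bounded coface degree, which is false: a Delaunay vertex can have $\Theta(n)$ incident edges.
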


\begin{proof}
The three Alpha complexes are subcomplexes of the Delaunay triangulations of $X$, $Y$, and $X\cup Y$.
By the Upper Bound Theorem, the Delaunay triangulation of $m$ points in $\R^d$ has $O(m^{\lceil d/2\rceil})$ faces: lifting the points to the paraboloid in $\R^{d+1}$ identifies its simplices with the lower faces of a convex polytope on $m$ vertices, whose face count is bounded by the cyclic-polytope maximum $O(m^{\lceil d/2\rceil})$ \citep{edelsbrunner2010computational}.
This is where the naive $m^{d+1}$ count of all possible simplices is \emph{halved} in the exponent: the empty-circumball (Delaunay) property makes the complex that sparse.
Hence each complex has $N = O(n^{\lceil d/2\rceil})$ faces.
It is built by the standard lift-and-hull algorithm in $O(n\log n + n^{\lceil d/2\rceil})$ time---the $O(n\log n)$ term for sorting/point location and the $O(n^{\lceil d/2\rceil})$ term to output the faces \citep{edelsbrunner2010computational}.
Sorting these $N$ faces by birth scale for the sweep is a comparison sort of $N$ items, costing $O(N\log N) = O(n^{\lceil d/2\rceil}\log n)$ for fixed $d$.
The sweep itself is linear in $N$, and the output has $O(N)$ critical scales.
No homology is computed, so no $N \times N$ boundary matrix is formed.
\end{proof}

The decisive contrast is with the mixup barcode of \citet{wagner2024mixup}: image-persistence reduction costs $O(N^3)$ in the simplex count---$O(N^\omega)$ with fast matrix multiplication, $\omega < 2.373$---whereas the Euler characteristic needs only the signed count of \eqref{eq:ecc_sweep}. The Euler characteristic beats the $O(N^3)$ reduction for \emph{every} $d \geq 2$.
In the moderate dimensions $d = 5$--$10$ the gap---a factor $N^2 = n^{2\lceil d/2\rceil}$, i.e.\ from $n^{6}$ to $n^{10}$---makes avoiding the reduction not merely faster but decisive.

This cost is essentially optimal.

\begin{proposition}[Optimality]\label{prop:hardness}
For even $d$, the exponent in Theorem~\ref{thm:algorithm_complexity} cannot be lowered: any algorithm that materializes the Alpha filtration must read its $\Theta(n^{\lceil d/2\rceil})$ faces, and at its largest scale the filtration is the full Delaunay triangulation of $X \cup Y$, whose boundary is the convex hull---and computing the convex hull of $n$ points in $\R^d$ requires $\Omega(n^{\lfloor d/2\rfloor})$ operations in the algebraic decision-tree model \citep{edelsbrunner2010computational}.

Since $\lfloor d/2\rfloor = \lceil d/2\rceil$ for even $d$, Algorithm~\ref{alg:mixup} is worst-case optimal there; for odd $d$ a factor-$n$ gap remains, the same gap as between convex-hull and Delaunay complexity.
(No matching lower bound is claimed for $\Delta\chi$ at a \emph{single} scale, which may be easier.)
\end{proposition}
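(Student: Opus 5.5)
The proof is a reduction: any algorithm that materializes the Alpha filtration must in particular produce the full Delaunay triangulation, and from that output the convex hull can be read off cheaply. We therefore lift the convex-hull lower bound to the problem of materializing the filtration. We also check that the worst-case size $\Theta(n^{\lceil d/2\rceil})$ is attained, so that the output-reading bound is tight and matches the upper bound of Theorem~\ref{thm:algorithm_complexity} up to the logarithmic sorting factor.

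\emph{Step 1: the full triangulation is part of the output.} Every Delaunay simplex has finite circumradius, so for $r$ at least the largest circumradius the complex $\Alpha(X\cup Y; r)$ is the entire Delaunay triangulation $\mathrm{Del}(X\cup Y)$. An algorithm that outputs every simplex with its birth scale $a(\sigma)$ has therefore output $\mathrm{Del}(X\cup Y)$.

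\emph{Step 2: the output can be as large as the upper bound.} We take $X\cup Y$ to be $n$ points on the moment curve in $\R^d$, perturbed into general position. For such points the Delaunay triangulation has $\Theta(n^{\lceil d/2\rceil})$ simplices. This is the classical neighborly/cyclic-polytope construction, obtained by lifting to the paraboloid, and it is the tightness half of the Upper Bound Theorem already cited in Theorem~\ref{thm:algorithm_complexity}. Splitting the points arbitrarily into disjoint $X,Y$ does not affect $\mathrm{Del}(X\cup Y)$. Writing down this many faces takes $\Omega(n^{\lceil d/2\rceil})$ time.

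\emph{Step 3: the convex hull is cheap to extract.} The boundary of $\mathrm{Del}(X\cup Y)$ is $\partial\,\mathrm{conv}(X\cup Y)$. Its facets are exactly the $(d-1)$-simplices incident to a single $d$-simplex, and a linear pass with face-incidence counting finds them. Thus, given the materialized filtration, the convex hull is computable in time linear in the output. The $\Omega(n^{\lfloor d/2\rfloor})$ algebraic decision-tree lower bound for convex hulls \citep{edelsbrunner2010computational} then transfers to any filtration-materializing algorithm.

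For even $d$ we have $\lfloor d/2\rfloor = \lceil d/2\rceil$, so both lower bounds coincide with the exponent in Theorem~\ref{thm:algorithm_complexity}, and the algorithm is worst-case optimal up to the $\log n$ sorting factor. For odd $d$ we record the factor-$n$ gap.

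The main obstacle is the precise model-of-computation statement. An output-size bound and a decision-tree bound are different kinds of bound, and the clean argument is the output-size one in Step 2. I would lead with Step 2 as the primary justification and present the convex-hull reduction as corroborating it in the algebraic decision-tree model. I would explicitly disclaim any bound for computing $\Delta\chi$ at a single scale without materializing the filtration.
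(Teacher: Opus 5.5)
Your proposal is correct and follows the paper's own argument. That argument has two parts: the worst-case output size $\Theta(n^{\lceil d/2\rceil})$ of the materialized filtration, and the observation that its top scale is the full Delaunay triangulation, whose boundary is the convex hull. You add an explicit worst-case configuration, a linear-time hull extraction, the $\log n$ caveat, and the correct remark that the $\Omega(n^{\lfloor d/2\rfloor})$ convex-hull bound is at heart an output-size bound rather than a genuine decision-tree bound.

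One inconsistency to fix concerns odd $d$. Your Step~2 is true as stated for every $d$: moment-curve points attain the Delaunay upper bound in all dimensions, a classical result of Seidel. For even $d$ there is a simpler justification than lifting to the paraboloid: the $\Theta(n^{d/2})$ facets of the cyclic polytope $\mathrm{conv}(X\cup Y)$ are all Delaunay faces. Because Step~2 holds for all $d$, it already gives $\Omega(n^{\lceil d/2\rceil})$ for odd $d$ as well. So the factor-$n$ gap you record for odd $d$, following the statement, is only a gap of the convex-hull route, not of the filtration-materializing model.
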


\paragraph{Off-diagonal and $k$ clouds.} Off the diagonal, the union term $\chi(\mathcal{U}(X;r) \cup \mathcal{U}(Y;s))$ is the Euler characteristic of a \emph{weighted} Alpha complex on $X \cup Y$ with weights $r^2$ on $X$ and $s^2$ on $Y$; the same sweep over its (now planar) critical structure yields the full surface $\Delta\chi(r,s)$.
For $k \geq 3$, inclusion--exclusion (Section~\ref{sec:concrete}) gives
\[
    \Delta\chi(\mathbf{t}) = \sum_{\emptyset \neq S \subseteq [k]} (-1)^{|S|+1}\, \chi\Big(\bigcup_{i\in S}\mathcal{U}(X_i; t_i)\Big),
\]
each term an Euler characteristic of a union computed as above, for a total of $O(2^k\, n^{\lceil d/2\rceil}\log n)$. 

\section{Discussion}\label{sec:discussion}

We have isolated a single integer-valued profile, the Intersection ECP $\Delta\chi(\mathbf{t}) = \chi(\bigcap_i \mathcal{U}(X_i; t_i))$, and shown it is not an arbitrary choice but the one the axioms force: among pointwise-Euler interaction profiles it is pinned by separation and normalization (Theorem~\ref{thm:universality}), and dropping even the Euler-integral hypothesis, $\chi$ itself is forced by a multivaluation argument (Theorem~\ref{thm:universality_strong}). Around this invariant we built the theory a measurement needs: bottleneck and $L^1$ stability under perturbation (Theorem~\ref{thm:algebraic_stability}), recovery of the underlying topological interaction under sampling---persistently at every regular scale, and exactly at scales regular for the sample (Theorem~\ref{thm:consistency}, Corollary~\ref{cor:uniform})---and a reduction-free Alpha-complex algorithm running in $O(n^{\lceil d/2\rceil}\log n)$, worst-case optimal in even dimensions among filtration-materializing algorithms (Theorem~\ref{thm:algorithm_complexity}, Proposition~\ref{prop:hardness}). Where the Euler characteristic cancels, the relative-homology refinement of Section~\ref{sec:relative} recovers what it discards.

The unifying view is that $\Delta\chi$ is a \emph{measurement instrument} rather than the richest available descriptor: the coarsest interaction invariant that is at once symmetric, native to every $k$, exactly recoverable, axiomatically forced, and cheap enough to compute and test at scale (Section~\ref{sec:relation_invariants}). That trade is what the applied programs need---regime detection in dynamical systems \citep{majhi2026regime}, and the measurement of class disentanglement in neural representations---neither of which the reduction-bound, pairwise, untested mixup barcode or chromatic six-pack can serve at population scale.

\paragraph{Limitations.} The coarsening is real, not free. A \emph{structural degeneracy}---a balanced overlap such as an annulus or torus shell---has $\Delta\chi = 0$ across a whole band of scales and is invisible to the profile (Example~\ref{rem:degeneracy}, Figure~\ref{fig:saturation}); the relative module detects it, but at the cost of the computational simplicity that motivated $\Delta\chi$. On the sampling side, our pointwise exact-recovery guarantee holds at scales regular for the \emph{random} sample filtration; upgrading it to hold at an arbitrary prescribed scale, equivalently making the uniform exceptional set $O(\epsilon)$ unconditionally rather than only in the reach-controlled regime, requires controlling the ephemeral sample features and is left open (Remark~\ref{rem:artifact_set}).
The sharp component bound carries a no-early-merging hypothesis (Theorem~\ref{thm:geometric_bounds}), and the algorithm's optimality has a factor-$n$ gap in odd dimension (Proposition~\ref{prop:hardness}).

\paragraph{Open problems.} Four stand out.
\emph{(i) A faithful $k$-cloud relative invariant.} The leave-one-out modules of Section~\ref{sec:relative} capture each cloud's marginal contribution but not the genuinely higher-order interaction carried by the triple and deeper intersections; whether the Mayer--Vietoris spectral sequence of the cover $\{\mathcal{U}(X_i; t_i)\}$ assembles these into a faithful invariant is open.
\emph{(ii) An analytic null.} The asymptotic distribution of $\max_r|\Delta\chi|$ under exchangeability---approachable via random-complex limit theory---would calibrate an interaction test without permutations; it is the principal open statistical problem of this line.
\emph{(iii) Sharper sampling.} A deformation-retraction argument controlling the transient features of intersections of unions would remove the sample-regularity caveat above and make the recovery unconditional at every fixed scale.
\emph{(iv) Cheaper $k$-fold computation.} Whether the $2^k$ inclusion--exclusion factor can be removed---e.g.\ by a single weighted power complex on $\bigcup_i X_i$ carrying all $k$ offsets at once---is open; the lens nerve does not achieve it (Proposition~\ref{thm:lens_complexity}).

\appendix
 
\section{Inclusion--Exclusion for Valuations}\label{app:incl-excl}
 
The proof of Theorem~\ref{thm:universality_strong} expands a multivaluation over
finite intersections of balls in each of its arguments.
The underlying fact is the
$m$-fold form of the valuation relation
$\mathbf{1}_{A\cup B}+\mathbf{1}_{A\cap B}=\mathbf{1}_A+\mathbf{1}_B$ of
Section~\ref{sec:euler_integral}; this result is well known \citep{klain1997introduction}, but we record it here for completeness.
 
\begin{lemma}[Inclusion--exclusion for valuations]\label{lem:incl-excl}
Let $\mathcal{L}$ be a lattice of sets (closed under finite $\cup$ and $\cap$),
$G$ an abelian group, and $v\colon \mathcal{L}\to G$ a valuation:
\[
  v(U\cup V)+v(U\cap V)=v(U)+v(V)\qquad(U,V\in\mathcal{L}),
\]
with $v(\emptyset)=0$.
Then for any $C_1,\dots,C_m\in\mathcal{L}$,
\[
  v\!\Big(\bigcup_{i=1}^m C_i\Big)
  =\sum_{\emptyset\neq S\subseteq[m]}(-1)^{|S|+1}\,
   v\!\Big(\bigcap_{i\in S}C_i\Big).
\]
\end{lemma}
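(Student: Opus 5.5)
I will prove the identity by induction on $m$, using only the valuation relation and the lattice closure of $\mathcal{L}$. For $m = 1$ the formula reads $v(C_1) = v(C_1)$. For $m = 2$ it is the valuation relation rearranged as $v(C_1 \cup C_2) = v(C_1) + v(C_2) - v(C_1 \cap C_2)$.

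For the inductive step, assume the formula for any $m-1$ members of $\mathcal{L}$. Put $U = \bigcup_{i=1}^{m-1} C_i$ and $V = C_m$; both lie in $\mathcal{L}$ by closure under finite unions. The valuation relation gives
\[
  v\Big(\bigcup_{i=1}^m C_i\Big) = v(U) + v(C_m) - v(U \cap C_m).
\]
Distributivity of $\cap$ over $\cup$ gives $U \cap C_m = \bigcup_{i=1}^{m-1} (C_i \cap C_m)$. This is again a union of $m-1$ members of $\mathcal{L}$, since $\mathcal{L}$ is closed under finite intersections.

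Now apply the inductive hypothesis twice. First, to $v(U)$: this yields the sum over nonempty $S \subseteq [m-1]$ of $(-1)^{|S|+1} v(\bigcap_{i \in S} C_i)$. Second, to $v(U \cap C_m)$: this yields the sum over nonempty $S' \subseteq [m-1]$ of $(-1)^{|S'|+1} v(\bigcap_{i \in S' \cup \{m\}} C_i)$.

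It remains to regroup the terms by subsets $S \subseteq [m]$.
\begin{itemize}
\item The sets $S$ not containing $m$ come from the expansion of $v(U)$, each with sign $(-1)^{|S|+1}$.
\item The singleton $S = \{m\}$ comes from the term $v(C_m)$, with sign $+1 = (-1)^{1+1}$.
\item The sets $S = S' \cup \{m\}$ with $S' \neq \emptyset$ come from $-v(U \cap C_m)$, with sign $-(-1)^{|S'|+1} = (-1)^{|S|+1}$, because $|S| = |S'| + 1$.
\end{itemize}
These three classes partition the nonempty subsets of $[m]$ with no overlap, and each subset carries the correct sign. This proves the $m$-fold formula.

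The hypothesis $v(\emptyset) = 0$ is not needed in the induction itself. It only ensures that terms with empty intersections behave consistently in later applications, such as the proof of Theorem~\ref{thm:universality_strong}.

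The only delicate step is this regrouping of signs, and it is routine bookkeeping. Nothing topological enters: the argument runs in any lattice and any abelian group $G$.
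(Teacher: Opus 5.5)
Your proof is correct and follows the paper's argument essentially verbatim. Like the paper, you induct on $m$ with $U=\bigcup_{i<m}C_i$, apply the valuation identity, use distributivity $U\cap C_m=\bigcup_{i<m}(C_i\cap C_m)$, apply the inductive hypothesis twice, and regroup by whether $m\in S$. Your side remark that $v(\emptyset)=0$ is never used in the induction is also right.
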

 
\begin{proof}
Induction on $m$; the case $m=1$ is trivial.
Write
$A=\bigcup_{i=1}^{m-1}C_i$.
The valuation identity gives
\[
  v(A\cup C_m)=v(A)+v(C_m)-v(A\cap C_m),
  \qquad
  A\cap C_m=\bigcup_{i=1}^{m-1}(C_i\cap C_m).
\]
Apply the inductive hypothesis to the $m-1$ sets $C_i$ and to the $m-1$ sets
$C_i\cap C_m$, using
$\bigcap_{i\in S}(C_i\cap C_m)=\bigcap_{i\in S\cup\{m\}}C_i$.
Each nonempty
$S'\subseteq[m]$ then contributes exactly once with sign $(-1)^{|S'|+1}$:
those with $S'\subseteq[m-1]$ from $v(A)$, the singleton $S'=\{m\}$ from
$v(C_m)$, and those with $m\in S'$, $S'\neq\{m\}$ from $-v(A\cap C_m)$ (the sign
flips since $-(-1)^{|S|+1}=(-1)^{|S\cup\{m\}|+1}$).
\end{proof}

\begin{proof}[Proof of Theorem~\ref{thm:universality_strong}]
Write each $A_i = \bigcup_{c=1}^{m_i} B_{i,c}$ as a finite union of balls, and set $B_{i,S_i} = \bigcap_{c \in S_i} B_{i,c}$ for $\emptyset \neq S_i \subseteq [m_i]$.
Applying the valuation property in each argument in turn (Lemma~\ref{lem:incl-excl}) expands
\[
    I(A_1, \ldots, A_k) = \sum_{\emptyset \neq S_1 \subseteq [m_1]} \cdots \sum_{\emptyset \neq S_k \subseteq [m_k]} (-1)^{\sum_i (|S_i| + 1)}\, I\big(B_{1,S_1}, \ldots, B_{k,S_k}\big).
\]
Each $B_{i,S_i}$ is convex, so the leaf overlap $\bigcap_i B_{i,S_i}$ is convex---empty or contractible.
By Separation and Normalization, together with Topological invariance (which lets the normalization on one contractible set apply to every contractible leaf), $I(B_{1,S_1}, \ldots, B_{k,S_k}) = \chi\big(\bigcap_i B_{i,S_i}\big)$ (namely $1$ if the convex leaf is nonempty, $0$ if empty).
The multivaluation $(A_1, \ldots, A_k) \mapsto \chi(\bigcap_i A_i)$ obeys the \emph{same} inclusion--exclusion expansion---it is a valuation in each argument---with the same leaf values.
Hence
\[
    I(A_1, \ldots, A_k) = \sum_{S_1, \ldots, S_k} (-1)^{\sum_i (|S_i| + 1)}\, \chi\big(\textstyle\bigcap_i B_{i,S_i}\big) = \chi\big(\textstyle\bigcap_i A_i\big),
\]
at every $\mathbf{t}$.
\end{proof}

\bibliographystyle{abbrvnat}
\bibliography{main}

\end{document}